\numberwithin{equation}{section} 
\newcommand*{\K}{\mathbb{K}}
\newcommand*{\M}{\mathbb{M}}
\newcommand*{\R}{\mathbb{R}}
\newcommand*{\Fcal}{\mathcal{F}}
\newcommand*{\Jcal}{\mathcal{J}}
\newcommand*{\Scal}{\mathcal{S}}
\newcommand*{\Tcal}{\mathcal{T}}
\newcommand*{\Ucal}{\mathcal{U}}
\newcommand*{\Vcal}{\mathcal{V}}
\newcommand*{\Wcal}{\mathcal{W}}
\newcommand*{\Cscr}{\mathscr{C}}
\newcommand*{\Dscr}{\mathscr{D}}
\newcommand*{\Hscr}{\mathscr{H}}
\newcommand*{\Kscr}{\mathscr{K}}
\newcommand*{\Mscr}{\mathscr{M}}
\newcommand*{\Nscr}{\mathscr{N}}
\newcommand*{\Pscr}{\mathscr{P}}
\newcommand*{\Tscr}{\mathscr{T}}
\newcommand*{\Xscr}{\mathscr{X}}
\newcommand*{\Lp}[1]{L^{#1}}
\newcommand{\Proj}[1]{\mathop{}\mathrm{P}_{#1}}
\newcommand{\dee}{\mathop{}\!\mathrm{d}}
\newcommand*{\e}{\mathrm{e}}
\DeclareMathOperator*{\sgn}{sgn}
\DeclareMathOperator*{\dom}{dom}
\DeclareMathOperator*{\ran}{ran}
\newcommand*{\Id}{\mathrm{Id}}
\newcommand*{\del}{\partial}
\newcommand*{\phiv}{\varphi}
\newtheorem{thm}{Theorem}[section]
\newtheorem{cor}[thm]{Corollary}
\newtheorem{lem}[thm]{Lemma}
\newtheorem{prp}[thm]{Proposition}
\newtheorem{res}{Result}
\theoremstyle{remark}
\newtheorem{rem}[thm]{Remark}
\theoremstyle{definition}
\newtheorem{dfn}[thm]{Definition}
\renewcommand{\left}{\mleft}
\renewcommand{\right}{\mright}
\let\bs\boldsymbol
\newcommand{\ip}[2]{\langle#1,#2\rangle}
\DeclarePairedDelimiter\abs{\lvert}{\rvert}
\DeclarePairedDelimiter\norm{\lVert}{\rVert}%
\newcommand{\diff}[2]{\frac{\dee #1}{\dee #2}}
\def\Xint#1{\mathchoice
  {\XXint\displaystyle\textstyle{#1}}%
  {\XXint\textstyle\scriptstyle{#1}}%
  {\XXint\scriptstyle\scriptscriptstyle{#1}}%
  {\XXint\scriptscriptstyle\scriptscriptstyle{#1}}%
  \!\int}
\def\XXint#1#2#3{{\setbox0=\hbox{$#1{#2#3}{\int}$}
    \vcenter{\hbox{$#2#3$}}\kern-.5\wd0}}
\def\dint{\Xint-}
\newcommand{\arxiv}[1]{\href{https://arxiv.org/abs/#1}{arXiv:#1}} 
\title[The sticky dynamics of the 1D Euler-alignment system as a gradient flow]{The sticky particle dynamics of the 1D pressureless Euler-alignment system as a gradient flow}
\author{Sondre Tesdal Galtung}
\address{Department of Mathematical Sciences, NTNU -- Norwegian University of Science and Technology, 7491 Trondheim, Norway}
\email{sondre.galtung@ntnu.no}
\keywords{Euler-alignment system, gradient flows, sticky particles, Wasserstein distance, entropy solutions, cluster formation}
\subjclass{35Q35, 35L67, 35Q92, 49J40, 76N10, 82C22}
\begin{document}
  
  \begin{abstract}
    We show how the sticky dynamics for the one-dimensional pressureless Euler-alignment system can be obtained as an \(L^2\)-gradient flow of a convex functional.
    This is analogous to the Lagrangian evolution introduced by Natile and Savar\'{e} for the pressureless Euler system, and by Brenier et al.\ for the corresponding system with a self-interacting force field.

    Our Lagrangian evolution can be seen as the limit of sticky particle Cucker--Smale dynamics, similar to the solutions obtained by Leslie and Tan from a corresponding scalar balance law, and provides us with a uniquely determined distributional solution of the original system in the space of probability measures with quadratic moments and corresponding square-integrable velocities.
    Moreover, we show that the gradient flow also provides an entropy solution to the balance law of Leslie and Tan, and how their results on cluster formation follow naturally from (non-)monotonicity properties of the so-called natural velocity of the flow.
  \end{abstract}
  
  \maketitle

  \section{Introduction}
  We consider the one-dimensional pressureless Euler-alignment system for density \(\rho_t\) and scalar velocity \(v_t\) depending on time \(t \ge 0\) and position \(x \in \R\), which reads
  \begin{subequations}\label{eq:EA}
    \begin{align}
      \del_t \rho_t + \del_x(\rho_t v_t) &= 0, \label{eq:EA:den} \\
      \del_t(\rho_t v_t) + \del_x(\rho_t v^2_t) &= \rho_t (\phi\ast(\rho_t v_t)) - \rho_t v_t (\phi\ast\rho_t). \label{eq:EA:mom}
    \end{align}
  \end{subequations}
  Here \(\phi\) is a communication protocol appearing in convolutions with the density and momentum,
  which we assume to be symmetric, radially nonincreasing and locally integrable, i.e.,  \(\phi \in \Lp{1}_{\mathrm{loc}}(\R)\).
  In particular, \(\phi\) may be \textit{weakly singular} of order \(\beta \in (0,1)\), that is, there exist constants \(R > 0\) and \(c > 0\) such that
  \begin{equation}\label{eq:w-sing}
    \phi(r) \ge c r^{-\beta} \quad \text{for all } r \in (0,R).
  \end{equation}
  A different facet of the communication protocol concerns its asymptotic behavior.
  Here \(\phi\) can have a thin or fat tail, respectively defined by
  \begin{equation}\label{eq:tail}
    \int_1^\infty \phi(x)\dee x < \infty \qquad \text{or} \qquad \int_1^\infty \phi(x)\dee x = \infty.
  \end{equation}
  That is, the tail of \(\phi\) is either integrable or not.
  A thin tail implies that there is only weak or no communication over larger distances, while a fat tail means there is strong communication.
  
  The Euler-alignment system originates from the theory of collective behavior, see \cite{shvydkoy2021dynamics} for a comprehensive overview of such models.
  It has been studied under various regularity assumptions, and one may even consider measure-valued solutions.
  A formal integration of \eqref{eq:EA} shows that both the total mass and momentum are conserved, the latter by virtue of the symmetry of the communication protocol.
  
  The corresponding particle dynamics, which can be obtained by considering \eqref{eq:EA} for the empirical measures
  \begin{equation*}
    \rho_t = \sum_{i=1}^{N}m_i \delta(x-x_i), \qquad \rho_t v_t = \sum_{i=1}^{N} m_i v_i \delta(x-x_i),
  \end{equation*}
  where \(\delta\) denotes a Dirac measure, is described by the following system of ordinary differential equations,
  \begin{equation}\label{eq:EAparticles}
    \dot{x}_i = v_i, \qquad \dot{v}_i = -\sum_{\substack{j=1 \\ x_j\neq x_i}}^{N} m_j \phi(x_i-x_j)(v_i-v_j)
  \end{equation}
  for masses \(\{m_i\}_{i=1}^N\), positions \(\{x_i\}_{i=1}^N\) and velocities \(\{v_i\}_{i=1}^N\) of \(N\) particles.
  This corresponds to the well-known Cucker--Smale model for swarming \cite{cucker2007emergent}.
  
  Such alignment models, especially those with Cucker--Smale-type communication protocols, have been studied intensively in both one and more dimensions,
  covering both hydrodynamic and particle systems like \eqref{eq:EA} and \eqref{eq:EAparticles}, as well as kinetic equations \cite{choi2021one,peszek2023heterogeneous}.
  Topics of interest include well-posedness of solutions and critical thresholds for blow-up \cite{tadmor2014critical,carrillo2017sharp,bhatnagar2023critical}, and asymptotic behavior \cite{ha2018first,ha2019complete,lear2022geometric}.
  An important notion in this context is flocking, that the support of the solutions remain bounded in time; it has been shown in \cite{ha2009simple,tadmor2014critical} that fat-tailed protocols lead to flocking for both the particle system \eqref{eq:EAparticles} and strong solutions of \eqref{eq:EA}.
  
  When the communication protocol is too singular, i.e., not locally integrable, concentration of mass cannot happen, cf.\ \cite{carrillo2017sharp}.
  Since we are specifically interested in mass which may collide and remain concentrated, we will here assume \(\phi \in \Lp{1}_\text{loc}(\R)\).
  Moreover, as detailed below, we will study the connection between \eqref{eq:EA} and \eqref{eq:EAparticles} through a gradient flow structure. On this note, we mention recent the works \cite{peszek2023heterogeneous,peszek2022measure} which concern a gradient flow framework for kinetic alignment systems.
  
  \subsection{Reduced systems}
  Here we give the ideas for how the second-order Euler-alignment system has been studied through reduction to first-order systems, both in the particle and continuum settings.
  
  Leveraging the fact that we are on the line, the alignment force driving the acceleration in \eqref{eq:EAparticles} can be written as an exact derivative, involving the primitive of the communication kernel \(\phi\),
  \begin{equation}\label{eq:Phi}
    \Phi(x) = \int_0^x \phi(y)\dee y.
  \end{equation}
  From our assumptions it follows that this is a continuous and odd function.
  An extreme case is the so-called all-to-all communication \(\phi\equiv K\) for some positive constant \(K > 0\), cf.\ \cite{amadori2021bv}, in which case \(\Phi(x) = Kx\).
  Figure \ref{fig:kernels} illustrates both bounded and weakly singular communication protocols \(\phi\), as well as their corresponding primitives \(\Phi\).
  \begin{figure}
  	\includegraphics[width=0.7\linewidth]{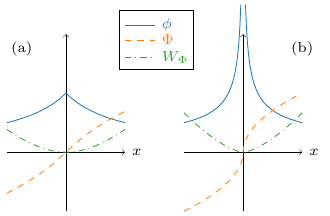}
  	\caption{Illustration of the function \(\Phi\) in the case of (a) bounded and (b) weakly singular communication protocols \(\phi\). We will return to the convex function \(W_{\Phi}\), the primitive of \(\Phi\), in Section \ref{s:Lag}.}
  	\label{fig:kernels}
  \end{figure}
  For future reference, we list some useful properties of \(\Phi\).
  \begin{lem}\label{lem:Phi:props}
    The function \(\Phi\) in \eqref{eq:Phi} has the following properties:
    \begin{enumerate}[label={\upshape(\alph*)}]
      \item It is continuous, nondecreasing and odd, \(\Phi(-x) = -\Phi(x)\).
      \item It is concave and subadditive for \(x \ge 0\), and so by (a), it is convex and superadditive for \(x \le 0\),
      \begin{equation*}
        \Phi(\abs{x}+\abs{y}) \le \Phi(\abs{x}) + \Phi(\abs{y}).
      \end{equation*}
      In particular, if \(\phi \neq 0\), then \(\Phi\) is a modulus of continuity, that is, a concave and continuous function \(\Phi \colon [0,\infty)\to[0,\infty)\) which satisfies \(0 = \Phi(0) < \Phi(r)\) for any \(r > 0\).
      \item It is pointwise linearly bounded, i.e.,
      \begin{equation}\label{eq:Phi:linbnd}
        \abs{\Phi(x)} \le \Phi(1)\max\{1,\abs{x}\} \le \Phi(1)(1+\abs{x}).
      \end{equation}
      \item By (a), (b) and the (reverse) triangle inequality,
      \begin{equation*}
        \abs{\abs{\Phi(x)}-\abs{\Phi(y)}} \le \Phi(\abs{\abs{x}-\abs{y}}) \le \Phi(\abs{x-y}).
      \end{equation*}
    \end{enumerate}
  \end{lem}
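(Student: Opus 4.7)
The plan is to verify the four items in order, since (b)--(d) all rely on ingredients developed in (a) and (b). For (a), continuity of $\Phi$ is immediate from $\phi \in \Lp{1}_\mathrm{loc}(\R)$ via absolute continuity of the Lebesgue integral. Since $\phi$ is radially nonincreasing it is in particular nonnegative (as $\phi(r)\ge \lim_{s\to\infty}\phi(s)\ge 0$ using $\phi \in L^1_{\mathrm{loc}}$), so $\Phi$ is nondecreasing. Oddness follows from the substitution $y \mapsto -y$ together with the symmetry of $\phi$:
\begin{equation*}
  \Phi(-x) = \int_0^{-x}\phi(y)\dee y = -\int_0^{x}\phi(-u)\dee u = -\int_0^{x}\phi(u)\dee u = -\Phi(x).
\end{equation*}

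For (b), since $\Phi$ is absolutely continuous with $\Phi' = \phi$ a.e., and $\phi$ restricted to $[0,\infty)$ is nonincreasing by the radial assumption, $\Phi$ is concave on $[0,\infty)$. Subadditivity for $x,y\ge 0$ is then the standard consequence of concavity together with $\Phi(0)=0$: write $\Phi(x) \ge \tfrac{x}{x+y}\Phi(x+y)+\tfrac{y}{x+y}\Phi(0)$, likewise for $\Phi(y)$, and add. Convexity and superadditivity on $(-\infty,0]$ follow by applying (a). Finally, if $\phi\not\equiv 0$, then by radial monotonicity $\phi>0$ in a neighborhood of $0$, so $\Phi(r)>0$ for $r>0$, which together with continuity, $\Phi(0)=0$, and monotonicity on $[0,\infty)$ makes $\Phi$ a modulus of continuity.

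For (c), I would use the well-known fact that concavity of $\Phi$ on $[0,\infty)$ together with $\Phi(0)=0$ makes $t\mapsto \Phi(t)/t$ nonincreasing on $(0,\infty)$. Hence for $|x|\ge 1$ one has $\Phi(|x|)/|x| \le \Phi(1)/1$, i.e., $|\Phi(x)| = \Phi(|x|) \le \Phi(1)|x|$, while for $|x|\le 1$ monotonicity gives $|\Phi(x)|\le \Phi(1)$. Combining yields \eqref{eq:Phi:linbnd}, and the last inequality is just $\max\{1,|x|\}\le 1+|x|$.

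For (d), I would assume WLOG $|x|\ge |y|$; then by subadditivity from (b),
\begin{equation*}
  \Phi(|x|) = \Phi\bigl(|y|+(|x|-|y|)\bigr) \le \Phi(|y|) + \Phi(|x|-|y|),
\end{equation*}
so $|\Phi(x)|-|\Phi(y)| \le \Phi(\bigl||x|-|y|\bigr|)$, and the reverse-triangle bound $\bigl||x|-|y|\bigr|\le |x-y|$ combined with monotonicity of $\Phi$ on $[0,\infty)$ from (a) completes the inequality. No step here is delicate; the only point requiring some care is the passage from concavity plus $\Phi(0)=0$ to both subadditivity and the monotonicity of $\Phi(t)/t$, which are the workhorses of (b), (c) and (d).
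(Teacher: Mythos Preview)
Your proof is correct and follows essentially the same route as the paper's (which is terse and leaves (d) to the reader). The only substantive difference is in (c): the paper argues directly that for $x>1$ the integrand $\phi$ is bounded by its average $\Phi(1)$ on $[0,1]$, giving $\Phi(x)-\Phi(1)\le \Phi(1)(x-1)$, whereas you invoke the equivalent fact that $t\mapsto\Phi(t)/t$ is nonincreasing; both are standard consequences of concavity with $\Phi(0)=0$. One small quibble: your justification that $\phi\ge 0$ via ``$\phi(r)\ge\lim_{s\to\infty}\phi(s)\ge 0$ using $\phi\in L^1_{\mathrm{loc}}$'' does not actually work, since local integrability places no constraint at infinity (e.g.\ $\phi\equiv -1$ is radially nonincreasing and in $L^1_{\mathrm{loc}}$). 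In the paper $\phi\ge 0$ is simply part of the standing hypothesis on the communication protocol, so you can just cite it rather than try to derive it.
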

  \begin{proof}
    Property (a) follows from the definition \eqref{eq:Phi} and \(\phi \ge 0\).
    Property (b) is a consequence of \(\phi\) being nonincreasing for \(x \ge 0\). Since \(\phi(x)\) is nonincreasing for \(x \ge 0\), we furthermore have that \(\Phi(x)\) is concave and subadditive for \(x \ge 0\).
    For property (c), observe that for \(x\in[0,1]\), \(\Phi(x) \le \Phi(1) = \norm{\phi}_{\Lp{1}(0,1)}\). On the other hand, for \(x > 1\), \(\Phi(x)-\Phi(1) \le \Phi(1)(x-1) \) since the integrand \(\phi\) is nonincreasing and must be less than its average \(\Phi(1)\) on \([0,1]\). Hence, by symmetry, \(\abs{\Phi(x)} \le \Phi(1)\max\{1,\abs{x}\}\).
    {The second inequality in (d) is a consequence of \(\Phi\) being nondecreasing and the reverse triangle inequality \(\abs{\abs{x}-\abs{y}} \le \abs{x-y}\). The first inequality can be rephrased as \(\abs{\int_{\abs{y}}^{\abs{x}}\phi(z)\dee z} \le \int_{0}^{\abs{\abs{x}-\abs{y}}}\phi(z)\dee z\)}, which is true due to \(\phi\) being nonincreasing.
  \end{proof}
  
  The function \(\Phi\) in \eqref{eq:Phi} plays a central role in several works, e.g., \cite{ha2018first,leslie2020lagrangian} where one takes advantage of working on the line in order to reduce the second order system \eqref{eq:EAparticles} to a first-order particle system, and similarly in \cite{leslie2023sticky} where \eqref{eq:EA} is studied using a scalar balance law.
  
  \subsubsection{The particle case}
  In the discrete, or particle case, the initial value problem for \eqref{eq:EAparticles} can be rewritten as
  \begin{equation*}
    \diff{}{t} \left( \dot{x}_i + \sum_{j=1}^{N} m_j \Phi(x_i-x_j) \right) = 0, \quad x_i(0) = \bar{x}_i, \quad v_i(0) = \bar{v}_i,
  \end{equation*}
  which in turn can be integrated to yield the Kuramoto-type equation
  \begin{equation}\label{eq:CS}
    \dot{x}_i + \sum_{j=1}^{N} m_j \Phi(x_i-x_j) = \bar{v}_i + \sum_{j=1}^{N} m_j \Phi(\bar{x}_i-\bar{x}_j) \eqqcolon \bar{\psi}_i.
  \end{equation}
  Observe how in the reduction to a first-order system, \(\bar{v}_i\) goes from an initial condition to a parameter through the quantity \(\bar{\psi}_i\), known as the \textit{natural velocity}, cf.\ \cite{ha2018first}.
  This is likely in analogy to the corresponding quantity for the original Kuramoto system \cite{kuramoto1975self}, which models coupled, nonlinear oscillators and reads
  \begin{equation}\label{eq:kuramoto}
    \dot{\theta}_i = \omega_i - \frac{\nu}{N}\sum_{j=1}^{N}\sin(\theta_i - \theta_j), \quad i \in \{1,\dots,N\}, \quad \nu > 0.
  \end{equation}
  Here \(\theta_i\) is the phase of the \(i\)\textsuperscript{th} oscillator, while the constant \(\omega_i\) is often called its \textit{natural frequency}.
  There is an extensive literature on the Kuramoto model, where one has studied equilibria and phenomena like phase-locking and synchronization, as well as extensions to kinetic models like the Kuramoto--Sakaguchi equation \cite{morales2023trend}; we will merely refer to the aforementioned recent work and the references therein.
  It was observed in \cite{hemmen1993lyapunov} that \eqref{eq:kuramoto} has a gradient flow structure; however, the corresponding energy functional is not convex.
  Applying this idea to the reduction of the Cucker--Smale system, it can be seen as a gradient flow for the scalar potential
  \begin{equation}\label{eq:pot:part}
    \frac12 \sum_{j=1}^{N}\sum_{k=1}^{N}m_j m_k \int_0^{x_j-x_k} \Phi(y)\dee y -\sum_{j=1}^{N}m_j \bar{\psi}_j x_j,
  \end{equation}
  and the study of clustering in \cite{ha2018first,ha2019complete,zhang2020complete} shows that the corresponding trajectories are well-defined between collisions.
  In their analysis, the trajectories are allowed to cross, and the natural velocities \(\{\bar{\psi}_i\}_{i=1}^N\) provide an ordering of the asymptotic trajectories.
  However, Leslie and Tan \cite{leslie2024finite} argue that this crossing of trajectories is not well-suited for hydrodynamic limits of the particle system, and so they advocate for a different point of view, which we describe next.
  
  \subsubsection{An associated scalar balance law}
  Returning to the continuum setting, we introduce the quantity
  \begin{equation}\label{eq:psi}
    \psi_t \coloneqq v_t + \Phi\ast\rho_t,
  \end{equation}
  which can be seen as the continuum version of the discrete conserved variable from the previous section.
  Then one can formally rewrite \eqref{eq:EA} as
  \begin{subequations}\label{eq:EAalt}
    \begin{align}
      \del_t \rho_t + \del_x(\rho_t v_t) &= 0, \label{eq:EAalt:den} \\
      \del_t(\rho_t \psi_t) + \del_x(\rho_t \psi_t v_t) &= 0. \label{eq:EAalt:mom}
    \end{align}
  \end{subequations}
  Based on the above system and drawing inspiration from \cite{brenier1998sticky}, Leslie and Tan \cite{leslie2023sticky} derive an associated scalar balance law for \(M_t = \rho_t((-\infty,x])\),
  \begin{equation}\label{eq:blaw}
    \del_t M_t + \del_x A(M_t) = (\phi\ast M_t)\del_xM _t,
  \end{equation}
  where the flux function \(A\) depends only on the initial data.
  The initial density \(\bar{\rho}\) is assumed to be a compactly supported probability measure, i.e., \(\bar{\rho} \in \Pscr_{\mathrm{c}}(\R)\), while the initial velocity \(\bar{v}\) is essentially bounded with respect to \(\bar{\rho}\), i.e., \(\bar{v} \in \Lp{\infty}(\R,\bar{\rho})\).
  After establishing existence and uniqueness of entropy solutions of \eqref{eq:blaw} in the sense of Kru\v{z}kov, they show, using the \(BV\)-calculus of Vol'pert \cite{volpert1967spaces}, that these solutions provide distributional solutions of \eqref{eq:EA} exhibiting ``sticky dynamics''.
  Furthermore, with this framework they study clustering for the Euler-alignment system \eqref{eq:EA} in \cite{leslie2024finite}, and show that the global clustering behavior can be deduced from the initial data through the flux function \(A\).
  
  \subsection{The aim of this paper}
  Before stating the aim and main results of this paper, we present the motivation for our study.
  
  \subsubsection{Gradient flows and entropy solutions for pressureless Euler systems}
  Pressureless Euler systems similar to \eqref{eq:EA} have previously been studied using gradient flows in Hilbert spaces.
  Suppose \(\rho_t\) is a probability measure for \(t\ge 0\), denoted \(\rho_t \in \Pscr(\R)\), and that \(v_t\) is such that the momentum  \(v_t \rho_t\) is a Radon measure, denoted \(v_t \rho_t \in \Mscr(\R)\).
  In particular, we are interested in probability measures \(\rho_t\) with finite second moment and velocities \(v_t\) which are square-integrable with respect to \(\rho_t\).
  That is, \(\rho_t \in \Pscr_2(\R)\) and \(v_t \in \Lp{2}(\R,\rho_t)\), meaning
  \begin{equation*}
    \int_\R \abs{x}^2 \dee\rho_t < \infty \quad \text{and} \quad \int_\R \abs{v_t}^2 \dee\rho_t < \infty.
  \end{equation*}
  Then it turns out that \(\Pscr_2(\R)\) is isometric to the convex cone \(\Kscr \subset L^2(0,1)\) of nondecreasing functions.
  That is, any \(\rho_t \in \Pscr_2(\R)\) can be associated with an \(X_t \in \Kscr\), and vice versa.
  
  This framework was used by Natile and Savar\'{e} to study the pressureless Euler system \cite{natile2009wasserstein}, corresponding to \eqref{eq:EA} with \(\phi \equiv 0\).
  The idea is to use a similar reduction to a first-order system to see the initial (natural) velocity as the gradient of a linear functional.
  To ensure that solutions remain in \(\Kscr\), this functional is augmented with the indicator function \(I_{\Kscr}\), which in turn replaces the gradient with a set-valued subdifferential.
  The convexity of the functional makes the subdifferential a maximally monotone operator. Hence, by the theory of Br\'{e}zis \cite{brezis1973operateurs}, there is a unique solution \(X_t\), with velocity \(V_t\), evolving according to the minimal element of the subdifferential.
  This gives rise to a distributional solution of the original system which turns out to be globally sticky, that is, mass that clusters will remain clustered.
  In particular, they recover results of \cite{brenier1998sticky} obtained with a conservation law-approach.
  
  In what can be seen as an extension of the above framework, called the Lagrangian evolution, Brenier et al.\ \cite{brenier2013sticky} study the pressureless Euler system with a self-interacting force field \(f[\rho_t]\) driving the momentum equation.
  Under certain assumptions on \(f\), the resulting force  can be seen as a Lipschitz perturbation of the subdifferential in \cite{natile2009wasserstein}, which is still covered by the results of Br\'{e}zis, i.e., \cite[Theoreme 3.17, Remarque 3.14]{brezis1973operateurs}.
  A specific case covered by their theory is the Euler--Poisson system, which is also shown to have globally sticky solutions in the case of an attractive force.
  Note that this form of differential inclusion can be framed as an evolution variational inequality, and both \cite{natile2009wasserstein} and \cite{brenier2013sticky} make use of additional results from \cite{savare1996weak}, see also \cite{savare1993approximation}, to have better estimates for the velocity \(V_t\).
  
  On a slightly different note, \cite{bonaschi2015equivalence} concerns a singular, nonlocal interaction equation, which can be regarded as a first-order version of the Euler--Poisson system.
  For this equation one can also formulate \(\Lp{2}\)-gradient flow solutions, as well as, in the spirit of \cite{brenier1998sticky}, entropy solutions of an associated conservation law.
  The authors prove that these notions of solution are in fact equivalent, by means of passing to a limit in particle approximations.
  Based on an observation relating the minimal evolution of the gradient flow and the Ole\u{\i}nik E condition for conservation laws, \cite{carrillo2023equiv} extended the equivalence from \cite{bonaschi2015equivalence} to pressureless Euler systems with appropriate forcing terms \(f\).
  Specifically, one finds that the solutions of Euler--Poisson obtained in \cite{brenier2013sticky} using gradient flows and solutions obtained from entropy solutions of a conservation law in \cite{nguyen2015one} are equivalent.
  We also mention the work \cite{ben2024mean}, where \cite{bonaschi2015equivalence} and \cite{leslie2023sticky} inspired the use of a balance law to study the mean-field limit of a second-order particle system for opinion dynamics.
  
  \subsubsection{Aim and main results}
  We mentioned before how sticky solutions of \eqref{eq:EA} can be obtained from entropy solutions of the balance law \eqref{eq:blaw} for initial data \((\bar{\rho},\bar{v}) \in \Pscr_\mathrm{c}(\R)\times\Lp{\infty}(\R,\bar{\rho})\).
  Considering the aforementioned equivalences between gradient flows and entropy solutions, it is then tempting to follow \cite{natile2009wasserstein,brenier2013sticky} in extending to \((\bar{\rho},\bar{v}) \in \Pscr_2(\R)\times\Lp{2}(\R,\bar{\rho})\)
  by formulating an associated gradient flow solution.
  However, there are some apparent obstacles.
  For one, \eqref{eq:EA} is not covered directly by the theory in \cite{brenier2013sticky}, as the forcing term is of the form \(f[\rho_t,v_t]\) rather than \(f[\rho_t]\).
  Moreover, in \cite{leslie2023sticky} they allow for weakly singular \(\phi\), for which the corresponding alignment force cannot be seen as a Lipschitz perturbation of the subdifferential in \cite{natile2009wasserstein}.
  
  We take a different approach to overcome this problem, regarding instead the alignment force as part of the subdifferential.
  Indeed, unlike for the Kuramoto model, the functional \eqref{eq:pot:part} is in fact convex and its \(\Lp{2}\)-version can then be augmented with the indicator function \(I_{\Kscr}\) to form a convex functional.
  
  Yet another indication that the framework of \(\Lp{2}\)-gradient flows is suitable for the problem at hand is found in \cite{leslie2024finite}.
  There the authors use the flux function \(A\) from \eqref{eq:blaw}, which can be regarded as the primitive of the \(\Lp{2}\)-version of the natural velocities, and its lower convex envelope \(A^{**}\) to predict the clustering behavior of the sticky solutions.
  The suggestion comes from a relation in \cite{natile2009wasserstein} which connects the projection of a function onto the convex cone \(\Kscr\) and the lower convex envelope of its primitive.
  In this sense, \(A^{**}\) encodes the projection of the natural velocity onto this cone.
  
  With this in mind, we seek the following type of solutions to the Euler-alignment system.
  \begin{dfn}[Distributional solutions of the Euler-alignment system]\label{dfn:EA:distsol}
    The pair \((\rho_t, v_t) \in \Pscr_2(\R) \times \Lp{2}(\R,\rho_t)\) is a distributional solution of the initial value problem for the Euler-alignment system if it satisfies \eqref{eq:EA} in the distributional sense, and for initial values \((\bar{\rho},\bar{v})\) we have
    \begin{equation}\label{eq:EA:init}
      \lim\limits_{t\to0+}\rho_t = \bar{\rho} \quad \text{in} \ \Pscr_2(\R) \qquad \text{and} \qquad \lim\limits_{t\to0+} v_t \rho_t = \bar{v} \bar{\rho} \quad \text{in} \ \Mscr(\R).
    \end{equation}
  \end{dfn}
  
  The paper is organized as follows.
  Section \ref{s:particles} provides a motivation of the gradient flow solution for the particle case.
  Some auxiliary results on optimal transport and convex analysis are presented in Section \ref{s:aux}, leading to the associated Lagrangian solutions in Section \ref{s:Lag}.
  Section \ref{s:other} details how the Lagrangian solution yields a solution of the original Euler-alignment system, as well as for the associated balance law.
  Finally, in Section \ref{s:cluster} we show how the Lagrangian solutions can be used to derive the clustering properties of the sticky solutions.
  
  We finish this section with a colloquial presentation of our main results, while we point to later theorems and corollaries for precise statements and full-fledged details.
	\begin{res}[Lagrangian solution]\label{res:1}
		There is a unique Lagrangian, or \(\Lp{2}\)-gradient flow, solution concept associated with the Euler-alignment system \eqref{eq:EA}.
	\end{res}
	That is, given the initial value problem for \eqref{eq:EA}, we can define an associated convex potential, or functional, which includes the term \(I_{\Kscr}\), the indicator function of the convex cone \(\Kscr\) of nondecreasing \(\Lp{2}(0,1)\)-functions.
	Because of this term, we must relax the notion of a gradient and instead consider the set-valued subdifferential of the functional, where an element is often called a subgradient.
	Then there is a unique map \(t \mapsto X_t \in \Kscr \) which evolves according to the subgradient of minimal \(\Lp{2}\)-norm.
	This is the Lagrangian solution, and it can be seen as a generalization of the particle positions for the Cucker--Smale-type system \eqref{eq:CS}, which we recall had a gradient-flow structure based on the potential \eqref{eq:pot:part}.
	The details of Result \ref{res:1} can be found in Theorem \ref{thm:LagSol}.
	
	\begin{res}[Particle approximation]\label{res:2}
		The Lagrangian solution can be realized as the limit of solutions of sticky-particle Cucker--Smale dynamics. 
		From these approximations, we deduce that the solution is globally sticky and features projection formulas and a semigroup property. 
	\end{res}
	The solutions of the sticky-particle Cucker--Smale dynamics evolve according to \eqref{eq:CS}, with the additional rule that particles which collide stick together in such a way that momentum is conserved.
	For such a particle solution we can construct a step function on the interval \([0,1)\), where the length of the \(i\)\textsuperscript{th} step is the mass of the \(i\)\textsuperscript{th} particle, while the value at that step is the position of said particle; this turns out to be a Lagrangian solution.
	We can then approximate the Lagrangian solution \(X_t\) by first approximating its initial data and velocity with step functions, and then let these evolve according to the corresponding sticky-particle dynamics.
	The convergence then follows from stability properties of Lagrangian solutions and convergence of the initial approximations, see Theorem \ref{thm:convergence} for details.
	
	Since the particle solutions are globally sticky, we can prove that also the limiting Lagrangian solution has this property, meaning that concentrated mass remains concentrated for all time.
	This allows us to express the Lagrangian solutions in terms of projection formulas related to the cone \(\Kscr\). These projections provide a semigroup property for the solutions, and may be of interest for numerical implementations.
	Further details on the stickiness property and the projection formulas are respectively provided in Corollary \ref{cor:stickyness} and Proposition \ref{prp:sticky:projection}.
	
	\begin{res}[Distributional solution]\label{res:3}
		The globally sticky Lagrangian solution gives rise to a distributional solution of the Euler-alignment system \eqref{eq:EA} and an entropy solution of the balance law \eqref{eq:blaw}, both of which are uniquely defined.
	\end{res}
	Let \(X_t\) be the Lagrangian solution associated with a set of initial data for \eqref{eq:EA}, and define \(\rho_t\) as the pushforward of the Lebesgue measure on \((0,1)\) through the map \(X_t\).
	Then there is a unique \(v_t \in \Lp{2}(\R,\rho_t)\) for which \(v_t \circ X_t\) coincides with the velocity of \(X_t\), and we can show that the pair \((\rho_t, v_t)\) satisfies \eqref{eq:EA} in the distributional sense.
	On the other hand, introducing \(M_t\) as the generalized inverse of \(X_t\), we can show that this in fact satisfies \eqref{eq:blaw} in the weak sense.
	This relaxes the assumptions on initial data for \eqref{eq:EA} from \cite{leslie2023sticky}, since \(\Pscr_{\mathrm{c}}(\R) \subset \Pscr_2(\R)\) and \(\Lp{\infty}(\R,\rho) \subset \Lp{2}(\R,\rho)\) for \(\rho \in \Pscr_{\mathrm{c}}(\R)\).
	We refer to Theorems \ref{thm:EAsol} and \ref{thm:blawsol} for the specifics of Result \ref{res:3}.
	
	\begin{res}[Clustering]\label{res:4}
		We can deduce the clustering behavior from the Lagrangian solution.
		In particular, clusters are identified from monotonicity properties of the associated natural velocity.
	\end{res}
	In analogy with the natural velocities \(\{\bar{\psi}_i\}_i\) for the Cucker--Smale-type system \eqref{eq:CS}, there is also a natural velocity \(\bar{\Psi} \in \Lp{2}(0,1)\) for the Lagrangian solution.
	For instance, it turns out that in the same way that particles will collide whenever \(\bar{\psi}_i\) are not in increasing order, mass will cluster in the Lagrangian solution if \(\bar{\Psi} \notin \Kscr\).
	To this end, denote by \(A\) the primitive of \(\bar{\Psi}\), this is in fact how the flux function in \eqref{eq:blaw} is defined, and let \(A^{**}\) be its lower convex envelope on \([0,1]\).
	By a result in \cite{natile2009wasserstein}, the right-hand derivative of \(A^{**}\) is the projection of \(\bar{\Psi}\) on \(\Kscr\).
	
	The image of the projection partitions the domain \((0,1)\) into sets of strictly increasing averaged natural velocity, and these sets will never cluster with one another.
	In order to study the formation of clusters, we further subdivide these sets by comparing \(A\) with \(A^{**}\).
	We can show that segments of \((0,1)\) where \(A\) deviates from \(A^{**}\) correspond to mass which clusters in finite time; on these segments \(\bar{\Psi}\) necessarily deviates from its projection on \(\Kscr\).
	On the other hand, mass will never cluster on segments where \(A\) coincides with \(A^{**}\) and \(\bar{\Psi}\) is strictly increasing.
	Finally, a segment where \(A\) coincides with \(A^{**}\) and both are linear is also a cluster, but whether the mass clusters in finite or infinite time depends on the regularity of the communication protocol \(\phi\).
	These three cases are respectively coined \textit{supercritical}, \textit{subcritical} and \textit{critical} in \cite{leslie2024finite}, and now we have seen the bridge between their study of the lower convex envelope \(A^{**}\) and the convex cone \(\Kscr\) containing the Lagrangian solutions.
	Theorem \ref{thm:clustering} provides the details of Result \ref{res:4}.
	On a final, related note, we can deduce whether different subsets or clusters flock, i.e., remain close, by considering the communication strength of \(\phi\) over large distances, i.e., its tail \eqref{eq:tail}.
	For a thin tail, whether two subsets drift apart or not is the result of a competition between the `size of the tail' \(\norm{\phi}_{\Lp{1}}\) and the difference in natural velocity for the subsets.
	If instead the tail is fat, there is always an upper bound on the distance between two subsets.
  
  We emphasize that the techniques we use are inherently one-dimensional in nature, and so these ideas are not directly applicable in higher dimensions.
  
  \section{Motivation in particle case}\label{s:particles}
  We will follow \cite{brenier2013sticky} in first motivating the \(\Lp{2}\)-gradient flow using the particle system.
  
  \subsection{Deriving the particle dynamics}\label{ss:partintro}
  We denote \(\bs{x} = (x_1,\dots,x_N) \in \R^N\), \(\bs{v} = (v_1,\dots,v_N) \in \R^N\) and \(\bs{m} = (m_1,\dots,m_N) \in \R^N_+\), where \(\R^N_+\) denotes the elements of \(\R^N\) with strictly positive entries.
  Let us introduce the auxiliary functions
  \begin{equation}\label{eq:psi:part}
    \psi_i(t) \coloneqq v_i(t) + \sum_{j=1}^{N}m_j \Phi\left(x_i(t)-x_j(t)\right), \qquad \bar{\psi}_i \coloneqq \bar{v}_i + \sum_{j=1}^{N}m_j \Phi\left(\bar{x}_i-\bar{x}_j\right), \quad i \in \{1,\dots,N\},
  \end{equation}
  which we collect in vectors \(\bs{\psi} = \bs{v} + \bs{\Phi}^\ast_{\bs{m}}(\bs{x})\) and \(\bar{\bs{\psi}} = \bar{\bs{v}} + \bs{\Phi}^\ast_{\bs{m}}(\bar{\bs{x}})\), where we have introduced \(\bs{\Phi}^\ast_{\bs{m}}\) as a shorthand notation for the convolution-like quantities in \eqref{eq:psi:part}.
  Then the particle dynamics \eqref{eq:EAparticles} can be rephrased as the following system of differential equations,
  \begin{equation}\label{eq:EA:part:DE}
    \dot{x}_i = \bar{\psi}_i - \sum_{j=1}^N m_j \Phi(x_i-x_j), \quad x_i(0) = \bar{x}_i, \quad i \in \{1,\dots,N\}.
  \end{equation}
  We do not want trajectories of this system to cross, and so we require the particles to retain their initial ordering.
  To this end we introduce the closed, convex cone
  \begin{equation}\label{eq:cone:part}
    \K^N \coloneqq \left\{ \bs{x} \in \R^N \colon x_1 \le x_2 \le \dots \le x_N \right\},
  \end{equation}
  and assume our initial data belongs to this cone, that is, \(\bar{\bs{x}} \in \K^N\).
  Note that for \(\bs{x} \in \K^N\) and \(\bs{m} \in \R_+^N\) we can define the \(\bs{m}\)-weighted Euclidean \(p\)-norm
  \begin{equation}\label{eq:euclid}
    \norm{\bs{x}}_{\bs{m},p} \coloneqq \left(\sum_{i=1}^{N}m_i \abs{x_i}^p\right)^{1/p}, \qquad \norm{\bs{x}}_{\bs{m},\infty} = \norm{\bs{x}}_{\infty}.
  \end{equation}
  For \(p=2\) we can define an associated \(\bs{m}\)-weighted inner product
  \begin{equation*}
    \langle \bs{y}, \bs{x} \rangle_{\bs{m}} \coloneqq \sum_{i=1}^{N}m_i y_i x_i,
  \end{equation*}
  and for convenience we write \(\norm{\cdot}_{\bs{m}} \coloneqq \norm{\cdot}_{\bs{m},2}\).
  \subsubsection{Short-time existence and uniqueness of solutions}
  We note that the right-hand side of \eqref{eq:EA:part:DE} is a continuous function in \(x_i\), and so existence of solutions \(\bs{x} \) in \(C^1([0,\infty);\R^N)\) is guaranteed.
  On the other hand, we cannot expect Lipschitz-continuity of the right-hand side, e.g., if \(\phi\) is weakly singular in the sense of \eqref{eq:w-sing}.
  However, owing to the monotonicity of \(\Phi^\ast_{\bs{m}}\) we can obtain uniqueness anyway, at least until particles meet, i.e., when \(\bs{x}\) hits the boundary \(\del\K^N\) given by
  \begin{equation}\label{eq:OX:part}
    \del \K^N = \{\bs{x} \in \K^N \colon \Omega_{\bs{x}} \neq \emptyset \}, \qquad \Omega_{\bs{x}} \coloneqq \{j \colon x_j = x_{j+1}, j \in \{1,\dots,N-1\}\}.
  \end{equation}
  Let us specify what is meant by monotonicity here. For \(\bs{x}, \bs{y} \in \R^N\) we compute
  \begin{align*}
    \ip{\bs{x}-\bs{y}}{\Phi^{\ast}_{\bs{m}}(\bs{x})-\Phi^{\ast}_{\bs{m}}(\bs{y})}_{\bs{m}} &= \sum_{i=1}^N m_i (x_i-y_i) \sum_{j=1}^N m_j \left[\Phi(x_i-x_j) -\Phi(y_i-y_j)\right] \\
    &= \frac12 \sum_{i=1}^N\sum_{j=1}^N m_i m_j [(x_i-x_j)-(y_i-y_j)]\left[\Phi(x_i-x_j) -\Phi(y_i-y_j)\right] \ge 0,
  \end{align*}
  where the second equality follows from \(\Phi\) being odd, and the final inequality from \(\Phi\) being nondecreasing.
  Hence, this monotonicity yields a one-sided Lipschitz condition, and we can show a stability estimate.
  For \(i \in \{1,2\}\), let \(\bs{x}_i(t)\) be a solution of \eqref{eq:EA:part:DE} with initial data \((\bs{x}_i(0), \bs{v}_i(0))=(\bar{\bs{x}}_i, \bar{\bs{v}}_i)\) so that one has \(\bar{\bs{\psi}}_i = \bar{\bs{v}}_i + \bs{\Phi}^\ast_{\bs{m}}(\bs{\bar{x}}_i)\).
  Then we have
  \begin{equation*}
    \diff{}{t} \frac12 \norm{\bs{x}_1-\bs{x}_2}_{\bs{m}}^2 = \ip{\bs{x}_1-\bs{x}_2}{\dot{\bs{x}}_1-\dot{\bs{x}}_2}_{\bs{m}} \le \ip{\bs{x}_1-\bs{x}_2}{\bar{\bs{\psi}}_1-\bar{\bs{\psi}}_2}_{\bs{m}} \le \norm{\bs{x}_1-\bs{x}_2}_{\bs{m}} \norm{\bar{\bs{\psi}}_1-\bar{\bs{\psi}}_2}_{\bs{m}},
  \end{equation*}
  which implies
  \begin{equation*}
    \diff{}{t} \norm{\bs{x}_1-\bs{x}_2}_{\bs{m}} \le \norm{\bar{\bs{\psi}}_1-\bar{\bs{\psi}}_2}_{\bs{m}},
  \end{equation*}
  and in turn
  \begin{equation*}
    \norm{\bs{x}_1(t)-\bs{x}_2(t)}_{\bs{m}} \le \norm{\bs{x}_1(s)-\bs{x}_2(s)}_{\bs{m}} + (t-s)\norm{\bar{\bs{\psi}}_1-\bar{\bs{\psi}}_2}_{\bs{m}}.
  \end{equation*}
  Suppose we start with distinct initial particles, i.e., \(\bar{x}_1 < \bar{x}_2 < \dots < \bar{x}_N\), meaning \(\bar{\bs{x}} \in \mathrm{int}(\K^N)\), the interior of \(\K^N\).
  Then we have a unique solution \(\bs{x}(t) \in \K^N\) at least for a short time, until \(\bs{x}(t) \in \del \K^N \).
  To ensure that the solution remains in \(\K^N\) after particles meet, we will in the following provide a well-defined procedure for resolving the dynamics in this case.
  
  \subsubsection{Collision dynamics and the differential inclusion}
  For \(\bs{x} \in \K^N\), the \textit{tangent cone} to \(\K^N\) at \(\bs{x}\), cf.\ \cite[Definition 6.38]{bauschke2017convex}, is defined as the following closure,
  \begin{equation}\label{eq:TC:part}
    T_{\bs{x}}\K^N \coloneqq \mathrm{cl}\{\vartheta(\bs{y}-\bs{x}) \colon \bs{y} \in \K^N, \vartheta \ge 0 \}.
  \end{equation}
  This is the cone in which the velocity of \(\bs{x}\) should belong for \(\bs{x}\) to remain within \(\K^N\), and in our setting \eqref{eq:TC:part} is equivalent to
  \begin{equation*}
    T_{\bs{x}}\K^N = \{\bs{v} \in \R^N \colon v_j \le v_{j+1} \ \text{for all} \ j \in \Omega_{\bs{x}}\},
  \end{equation*}
  where we recall \(\Omega_{\bs{x}}\) from \eqref{eq:OX:part}.
  In particular, we see that if \(\Omega_{\bs{x}} = \emptyset\), i.e., \(\bs{x} \in \mathrm{int}(\K^N)\), then \(T_{\bs{x}}\K^N = \R^N\).
  Assume a subset of particles \(\Jcal_i(t) \subset \{1,\dots,N\}\) collides at time \(t\), where we define
  \begin{equation*}
    \Jcal_i(t) \coloneqq \left\{j \colon x_j(t) = x_i(t)\right\}, \qquad i_*(t) = \min\Jcal_i(t), \qquad i^*(t) \coloneqq \max \Jcal_i(t).
  \end{equation*}
  Furthermore we assume the completely inelastic collision rule
  \begin{equation}\label{eq:v:proj}
    \bs{v}(t+) = \Proj{T_{\bs{x}(t)}\K^N}\bs{v}(t-)
  \end{equation}
  leading to
  \begin{equation*}
    v_i(t+) = \frac{\sum_{j\in\Jcal_i(t)}m_j v_j(t-)}{\sum_{j\in\Jcal_i(t)}m_j},
  \end{equation*}
  which is natural in the sense that it conserves momentum.
  Now, as noted in \cite{leslie2023sticky}, by continuity of the trajectory \(\bs{x}(t)\) and the continuity of \(\Phi\) it follows that \(\psi_j(t-)-v_j(t-) = \psi_i(t-)-v_i(t-)\) for all \(j \in \Jcal_i(t)\).
  This in turn leads to
  \begin{equation}\label{eq:RH:part}
    \psi_j(t+) = \frac{\sum_{j\in\Jcal_i(t)}m_j \psi_j(t-)}{\sum_{j\in\Jcal_i(t)}m_j} = \frac{\sum_{j\in\Jcal_i(t)}m_j \bar{\psi}_j}{\sum_{j\in\Jcal_i(t)}m_j}
  \end{equation}
  where the final identity comes from \(\psi_i(t)\) being constant between collisions by virtue of \eqref{eq:EA:part:DE}.
  For convenience we take the velocities \(v_i(t)\), and then consequently also \(\psi_i(t)\), to be right-continuous functions of \(t\).
  Afterwards and until the next collision, the clustered particle will only be affected by particles not belonging to the cluster.
  Indeed, for \(k \in \Jcal_i(t)\) we have
  \begin{equation*}
    \dot{v}_k(t) = -\sum_{\substack{j=1 \\ x_j(t)\notin \Jcal_i(t)}}^N m_j \phi(x_k(t)-x_j(t))(v_k(t)-v_j(t))
  \end{equation*}
  or, since \(\Phi(0) = 0\),
  \begin{equation*}
    v_k(t) + \sum_{j=1}^n m_j \Phi(x_k(t)-x_j(t)) = \frac{\sum_{j\in\Jcal_i(t)}m_j \bar{\psi}_j}{\sum_{j\in\Jcal_i(t)}m_j}.
  \end{equation*}
  Moreover, amassed particles will not break apart again, since every particle in labeled by \(\Jcal_i(t)\) moves with the same velocity.
  As pointed out in \cite{brenier2013sticky}, this sticky evolution can equivalently be defined by relabeling the amassed particles as a single new particle with mass given by the sum of the previous masses.
  At every collision, the number of particles in the system would then decrease from the initial \(N\).
  Recalling \eqref{eq:RH:part}, the collision dynamics can therefore be seen as a projection of the initial vector \(\bar{\bs{\psi}}\) of natural velocities onto the tangent cone of \(\bs{x}(t)\).
  That is,
  \begin{equation}\label{eq:proj:psi}
    \bs{\psi}(t+) = \Proj{T_{\bs{x}(t)}\K^N}\bs{\psi}(t-) = \Proj{T_{\bs{x}(t)}\K^N}\bar{\bs{\psi}}.
  \end{equation}
  From this we see how the collision, or clustering, dynamics only depends on \(\bar{\bs{\psi}}\), as expected from the particle dynamics in \cite{leslie2024finite}.
  From the above arguments, we have the following result.
  \begin{lem}\label{lem:EA:part:unique}
    Let \((\bar{\bs{x}}, \bar{\bs{v}}) \in \mathrm{int(\K^N)\times\R^N}\) and define \(\bar{\bs{\psi}} \in \R^N\) as in \eqref{eq:psi:part}.
    Then, for \(t \ge 0\) there is a uniquely defined, globally sticky solution \((\bs{x}(t),\bs{v}(t)) \in \K^N \times T_{\bs{x}(t)}\K^N\) of \eqref{eq:EA:part:DE} satisfying \((\bs{x}(0),\bs{v}(0)) = (\bar{\bs{x}},\bar{\bs{v}})\).
  \end{lem}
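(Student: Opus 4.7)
The plan is to construct the solution inductively across a finite number of collision events, assembling the ingredients already laid out in the excerpt. On any interval where $\bs{x}(t) \in \mathrm{int}(\K^N)$, the right-hand side of \eqref{eq:EA:part:DE} is continuous, so Peano's theorem yields a $C^1$ solution; the monotonicity computation displayed in the excerpt upgrades this to uniqueness via the one-sided Lipschitz bound
\begin{equation*}
  \norm{\bs{x}_1(t)-\bs{x}_2(t)}_{\bs{m}} \le \norm{\bar{\bs{x}}_1-\bar{\bs{x}}_2}_{\bs{m}} + t\,\norm{\bar{\bs{\psi}}_1-\bar{\bs{\psi}}_2}_{\bs{m}}.
\end{equation*}
Starting from $\bar{\bs{x}} \in \mathrm{int}(\K^N)$, this gives a unique trajectory up to the first collision time $t_1 \coloneqq \inf\{t > 0 \colon \bs{x}(t) \in \del\K^N\} \in (0,\infty]$.

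If $t_1 < \infty$, I apply the completely inelastic collision rule \eqref{eq:v:proj} to define $\bs{v}(t_1+)$, and then relabel each cluster $\Jcal_i(t_1)$ as a single new particle with mass $\sum_{j \in \Jcal_i(t_1)} m_j$ and with common position lying in the interior of a lower-dimensional cone $\K^{N'}$ with $N' < N$. The discussion leading to \eqref{eq:RH:part} and \eqref{eq:proj:psi} shows that the natural velocity vector of this reduced system is exactly the projection of $\bar{\bs{\psi}}$ onto $T_{\bs{x}(t_1)}\K^N$, read off on the clusters. The reduced problem is therefore again of the form \eqref{eq:EA:part:DE}, with initial data in the interior of its cone, and the short-time argument applies verbatim. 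Iterating, since each collision event strictly decreases the number of distinct particle positions, there are at most $N-1$ such events, with times $0 < t_1 < \dots < t_k$, $k \le N-1$. After $t_k$ the remaining trajectory persists for all later $t$ without further collisions, and concatenating the pieces yields a global solution $(\bs{x}, \bs{v})$ with $\bs{x}(t) \in \K^N$ and, by construction of the projection together with the fact that clusters move rigidly between collisions, $\bs{v}(t) \in T_{\bs{x}(t)}\K^N$ for every $t \ge 0$. Stickiness is built in, since merged clusters are never dissolved.

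The main obstacle I anticipate is verifying uniqueness across the collision instants rather than within an interval of smooth evolution. On each open interval $(t_j, t_{j+1})$ uniqueness is furnished by the one-sided Lipschitz estimate applied to the reduced system, and at each $t_j$ the right-continuous projection rule \eqref{eq:v:proj} uniquely prescribes $\bs{v}(t_j+)$, leaving no residual freedom. The delicate point is to check that the natural velocity of the reduced system coming from the projection is consistent with the common velocity of the newly formed cluster; this is precisely what \eqref{eq:RH:part} establishes, its proof relying only on continuity of $\bs{x}$ and of $\Phi$, so the inductive step closes cleanly and the construction passes unambiguously through each collision.
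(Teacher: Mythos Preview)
Your proposal is correct and follows essentially the same approach as the paper. The paper does not give a separate proof of this lemma but rather states it as a consequence of the preceding discussion (Peano existence, one-sided Lipschitz uniqueness from the monotonicity of $\bs{\Phi}^{\ast}_{\bs{m}}$, the inelastic collision rule, and the relabeling argument reducing the particle count); your inductive construction across at most $N-1$ collision events is a clean formalization of exactly that argument.
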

  
  Following \cite[Section 1.2]{brenier2013sticky}, we then argue that the instantaneous force that changes the velocity on impact at \(\bs{x} \in \del\K^N\) should belong to the \textit{normal cone}\footnote{A cone very similar to \(\K^N\), and its polar cone, is treated in \cite[Exercise 6.16]{bauschke2017convex}.} \(N_{\bs{x}}\K^N\), defined as
  \begin{equation}\label{eq:NC:part}
  	N_{\bs{x}}\K^N \coloneqq \left\{ \bs{n} \in \R^N \colon \ip{\bs{n}}{\bs{y}-\bs{x}}_{\bs{m}} \le 0 \ \text{for all} \ \bs{y} \in \K^N \right\}.
  \end{equation}
  Then we can incorporate the collision dynamics in \eqref{eq:EAparticles} by rephrasing it as the second-order differential inclusion
  \begin{equation}
    \dot{x}_i = v_i, \qquad \dot{v}_i + N_{\bs{x}}\K^N \ni -\sum_{\substack{j=1 \\ x_j\neq x_i}}^{N} m_j \phi(x_i-x_j)(v_i-v_j).
  \end{equation}
  or equivalently, using \eqref{eq:psi:part},
  \begin{equation}\label{eq:EA:part:DI}
    \dot{\bs{x}} = \bs{v}, \qquad \dot{\bs{\psi}} + N_{\bs{x}}\K^N \ni \bs{0}.
  \end{equation}
  As observed in \cite{natile2009wasserstein}, if \(\bs{x}\colon [0,\infty) \to \K^N\) satisfies a global stickiness condition, there is a monotonicity property for the normal cones, namely
  \begin{equation*}
    N_{\bs{x}(s)}\K^N \subset N_{\bs{x}(t)}\K^N \quad \text{for all } s < t.
  \end{equation*}
  Then, for \(\bs{\xi} \colon [0,\infty) \to \R^N\) satisfying \(\bs{\xi}(t) \in N_{\bs{x}(t)}\K^N\), e.g., \(-\dot{\bs{\psi}}(t)\) in \eqref{eq:EA:part:DI}, we obtain
  \begin{equation*}
    \int_s^t \bs{\xi}(r)\dee r \in N_{\bs{x}(t)}\K^N \quad \text{for all } s < t.
  \end{equation*}
  Combining the above with a formal integration of the second equation of \eqref{eq:EA:part:DI} then yields 
  \begin{equation*}
    \bs{\psi}(t) + N_{\bs{x}(t)}\K^N \ni \bar{\bs{\psi}},
  \end{equation*}
  which we in turn rephrase as the first-order differential inclusion
  \begin{equation}\label{eq:EA:DI:part}
    \dot{\bs{x}} + \bs{\Phi}^\ast_{\bs{m}}(\bs{x}) + N_{\bs{x}}\K^N \ni \bar{\bs{\psi}}.
  \end{equation}
  This relation lays the foundation for our study of the problem in the continuum case.
  
  \subsubsection{The barycentric lemma}
  Consider again a subset of particles \(\Jcal_i(t)\) colliding at time \(t\); their velocities must necessarily satisfy
  \begin{equation*}
    v_{i_*(t)}(t-) \ge v_{i_*(t)+1}(t-) \ge \dots \ge v_{i^*(t)-1}(t-) \ge v_{i^*(t)}(t-), 
  \end{equation*}
  or else they would not have collided in the first place.
  However, by the continuity of the trajectories \(x_i\) and the continuity of \(\Phi\) we also have
  \begin{equation*}
    \psi_{i_*(t)}(t-) \ge \psi_{i_*(t)+1}(t-) \ge \dots \ge \psi_{i^*(t)-1}(t-) \ge \psi_{i^*(t)}(t-).
  \end{equation*}
  Combining the above chain of inequalities with the collision dynamics \eqref{eq:v:proj}, we obtain
  \begin{equation}\label{eq:barycentric}
    \frac{\sum_{j=k}^{i^*(t)}m_j \psi_j(t-)}{\sum_{j=k}^{i^*(t)}m_j} \le \psi_i(t+) = \frac{\sum_{j\in\Jcal_i(t)}m_j \psi_j(t-)}{\sum_{j\in\Jcal_i(t)}m_j} \le \frac{\sum_{j=i_*(t)}^k m_j \psi_j(t-)}{\sum_{j=i_*(t)}^k m_j},
  \end{equation}
  and, relying once more on \(\psi_i(t)\) being constant between collisions, we can write this as
  \begin{equation}\label{eq:Oleinik:part}
    \frac{\sum_{j=k}^{i^*(t)}m_j \bar{\psi}_j}{\sum_{j=k}^{i^*(t)}m_j} \le \psi_i(t+)  = \frac{\sum_{j\in\Jcal_i(t)}m_j \bar{\psi}_j}{\sum_{j\in\Jcal_i(t)}m_j} \le \frac{\sum_{j=i_*(t)}^k m_j \bar{\psi}_j}{\sum_{j=i_*(t)}^k m_j}.
  \end{equation}
  This is the \textit{barycentric lemma} used in \cite{brenier1998sticky} for the case \(\phi \equiv 0\), and which was shown in \cite{leslie2023sticky} to still hold in our current case.
  It turns out that the barycentric lemma is a particular, in fact, particle case of the Ole\u{\i}nik E condition \cite{oleinik1959uniqueness} for the flux function \(A\) from the balance law \eqref{eq:blaw}.
  We return to this matter in Section \ref{s:other}.
  
  Figure \ref{fig:discflux} illustrates the barycentric lemma for six particles with different masses \(m_i\) and natural velocities \(\bar{\psi}_i\), \(i \in \{1,\dots,6\}\). The natural velocity \(\bar{\psi}_i\) of the \(i\)\textsuperscript{th} particle is the slope of the \(i\)\textsuperscript{th} solid line segment,  and the segments can be seen as a piecewise linear interpolation of a (flux) function \(A\) with breakpoints at \(\theta_i = \sum_{j=1}^{i}m_j\).
  If particles 3, 4 and 5 were to collide, the weighted average of their natural velocities is the slope of the dash-dotted line segment, which is the lower convex envelope of the interpolating function on the interval \([\theta_2,\theta_5]\). Observe that \eqref{eq:Oleinik:part} holds for this case.
  \begin{figure}
    \includegraphics[width=0.7\linewidth]{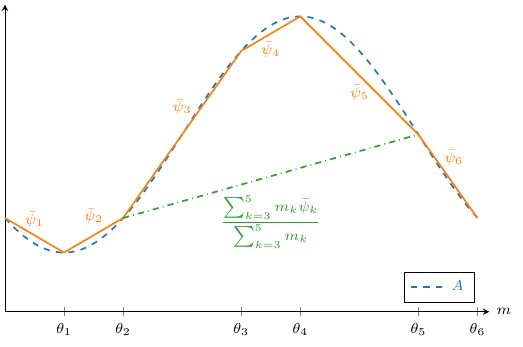}
    \caption{Illustration of natural velocities \(\bar{\psi}_i\) for six particles with different masses \(m_i\), \(i \in \{1,\dots,6\}\), where we have introduced the breakpoints \(\theta_i = \sum_{k=1}^{i}m_k\).}
    \label{fig:discflux}
  \end{figure}
  
  \section{Auxiliary results}\label{s:aux}
  In order to define Lagrangian, or \(\Lp{2}\)-gradient flow, solutions for the Euler-alignment system, we will need some auxiliary results which play a central role in \cite{natile2009wasserstein,brenier2013sticky}.
  
  \subsection{Some results from optimal transport}
  Consider a probability measure \(\rho \in \Pscr(\R)\), for which we define its right-continuous cumulative distribution function
  \begin{equation}\label{eq:M}
    M_\rho(x) = \rho((-\infty,x]), \quad x \in \R.
  \end{equation}
  Then \(\del_x M_\rho = \rho\) in \(\Dscr'(\R)\), i.e., in the distributional sense.
  We can then define its right-continuous monotone rearrangement, or generalized inverse, through
  \begin{equation}\label{eq:X}
    X_\rho \coloneqq \inf\left\{ x \colon M_\rho(x) > m \right\}, \quad m \in \Omega,
  \end{equation}
  where \(\Omega \coloneqq (0,1)\).
  We denote the restriction of the one-dimensional Lebesgue measure \(\mathcal{L}^1\) to \(\Omega\) by \(\mathfrak{m}\coloneqq \mathcal{L}^1|_\Omega\), such that we have the push-forward relations
  \begin{equation}\label{eq:push}
    X_{\rho\#}\mathfrak{m} = \rho, \qquad \int_\R \phiv(x) \dee\rho(x) = \int_\Omega \phiv(X_\rho(m)) \dee m
  \end{equation}
  for any Borel function \(\varphi \colon \R \to [0,\infty]\).
  
  The \(p\)-Wasserstein distance, or Kantorovich--Rubinstein metric, between two measures is defined as 
  \begin{equation}\label{eq:Wp}
    W_p^p(\rho_1,\rho_2) \coloneqq \min\left\{\int_{\R\times\R} \abs{x-y}^p \dee\varrho(x,y) \colon \varrho \in \Pscr(\R\times\R), \varpi^{i}_{\#}\varrho = \rho_i \right\},
  \end{equation}
  where \(\varpi^{i}\) is the projection on the \(i\)\textsuperscript{th} coordinate, i.e., \(\varpi^{i}(x_1,x_2) = x_i\).
  In one dimension, the unique optimal coupling of measures can be explicitly found using the monotone rearrangement. Indeed, by the Hoeffding--Frech\'{e}t theorem \cite[Theorem 2.18]{villani2003topics} the optimal coupling is given by
  \begin{equation}\label{eq:optcoupling}
    \varrho = X_{\rho_1,\rho_2\#}\mathfrak{m}, \qquad X_{\rho_1,\rho_2} = (X_{\rho_1}, X_{\rho_2}).
  \end{equation}
  A direct consequence of this is the identity
  \begin{equation*}
    W_p(\rho_1,\rho_2) = \left(\int_{\Omega} \abs{X_{\rho_1}-X_{\rho_2}}^p\dee \omega\right)^{1/p} = \norm{X_{\rho_1}-X_{\rho_2}}_{\Lp{p}(\Omega)}.
  \end{equation*}
  That is, the \(p\)-Wasserstein distance of two probability measures equals the \(\Lp{p}\)-distance of their monotone rearrangements.
  Let us then for \(p \in [1,\infty)\) introduce the space
  \begin{equation}\label{eq:Tp}
    \Tscr_p \coloneqq \left\{ (\rho,v) \colon \rho \in \Pscr_p(\R), v \in \Lp{p}(\R,\rho) \right\},
  \end{equation}
  for which we can define a semidistance \(U_p\) as follows,
  \begin{equation}\label{eq:Up}
    U_p^p((\rho_1,v_1),(\rho_2,v_2)) \coloneqq \int_{\R\times\R}\abs{v_1(x)-v_2(y)}^p \dee\varrho(x,y) = \norm{v_1\circ X_1 - v_2\circ X_2}_{\Lp{p}(\Omega)}^p,
  \end{equation}
  where the final identity again follows from \eqref{eq:optcoupling}.
  In turn, we can define a metric \(D_p\) through
  \begin{equation}\label{eq:Dp}
    D_p^p((\rho_1,v_1),(\rho_2,v_2)) \coloneqq W_p^p(\rho_1,\rho_2) + U_p^p((\rho_1,v_1),(\rho_2,v_2)),
  \end{equation}
  so that \((\Tscr_p, D_p)\) is a metric, but not complete, space, see \cite[Proposition 2.1]{natile2009wasserstein}.
  
  \subsection{Some results from convex analysis}
  Let \(f \colon \Xscr \to \R \cup \{+\infty\}\) be an extended real-valued function for some set \(\Xscr\).
  Its \textit{effective domain}, or just \textit{domain}, \(\dom(f)\) is the set of points where \(f\) is finite, i.e.,
  \begin{equation*}
    \dom(f) \coloneqq \{x \in \Xscr \colon f(x) < +\infty\}.
  \end{equation*}
  If \(\dom(f) \neq \emptyset\), then \(f\) is called \textit{proper}.
  Throughout the paper we will consider the real-valued Hilbert space \(\Lp{2}(\Omega)\), where we write \(\ip{\cdot}{\cdot} = \ip{\cdot}{\cdot}_{\Lp{2}(\Omega)}\) for its associated inner product and, sometimes for brevity, \(\norm{\cdot} = \norm{\cdot}_{\Lp{2}(\Omega)}\) for its norm.
  
  The metric projection onto a nonempty, closed, convex subset \(\Cscr\) of a Hilbert space is a well-defined Lipschitz map \(\Proj{\Cscr} \colon \Lp{2}(\Omega) \to \Cscr\), and for all \(X \in \Lp{2}(\Omega)\) it is characterized by
  \begin{equation}\label{eq:proj:C}
    Y = \Proj{\Cscr}X \iff Y \in \Cscr, \quad \ip{X-Y}{Z-Y} \le 0 \ \text{for all} \ Z \in \Cscr,
  \end{equation}
  see, e.g., \cite[Theorem 3.16]{bauschke2017convex} or \cite[Proposition 1.37]{peypouquet2015convex}.
  Denoting by \(\Cscr^\circ\) the element of minimal norm in a closed and convex set \(\Cscr \subset \Lp{2}(\Omega)\), \eqref{eq:proj:C} is equivalent to
  \begin{equation*}
    \norm{X-Y}_{\Lp{2}(\Omega)} = \min_{Z \in \Cscr} \norm{X-Z}_{\Lp{2}(\Omega)} \iff Y = \left(X - \Cscr\right)^\circ.
  \end{equation*}
  Let us now consider what can be seen as a generalization of \(\K^N\) from \eqref{eq:cone:part}, namely
  \begin{equation}\label{eq:K}
    \Kscr \coloneqq \{ X \in \Lp{2}(\Omega) \colon X \text{ is nondecreasing and right-continuous} \},
  \end{equation}
  which is a closed, convex cone in \(\Lp{2}(\Omega)\).
  The indicator function \(I_\Kscr\) of \(\Kscr\) is then given by
  \begin{equation}\label{eq:indicator}
    I_\Kscr(X) = \begin{cases}
      0, & X \in \Kscr, \\
      +\infty, & X \notin \Kscr.
    \end{cases}
  \end{equation}
  Since the set \(\Kscr\) is convex, \(I_{\Kscr}\) is a convex functional.
  
  For a given proper and convex functional \(\Fcal\) on \(\Lp{2}(\Omega)\), its \textit{subdifferential} at \(X \in \Lp{2}(\Omega)\) is defined as the set
  \begin{equation*}
    \del \Fcal(X) = \left\{ Y \in \Lp{2}(\Omega) \colon \Fcal(Z) - \Fcal(X) \ge \ip{Y}{Z-X} \ \forall \, Z \in \Lp{2}(\Omega) \right\}
  \end{equation*}
  For \(X \in \Kscr\), the subdifferential \(\del I_{\Kscr}(X)\) coincides with the normal cone \(N_X \Kscr\) of \(\Kscr\) at \(X\), which in analogy with the particle case \eqref{eq:NC:part} can be defined as
  \begin{equation}\label{eq:NX}
    N_X \Kscr \coloneqq \left\{ W \in \Lp{2}(\Omega) \colon \ip{W}{Y-X} \le 0 \ \forall \, Y \in \Kscr \right\}.
  \end{equation}
  Comparing \eqref{eq:proj:C} and \eqref{eq:NX} we find the following equivalence.
  For any \(X, Y \in \Lp{2}(\Omega)\) we have
  \begin{equation}\label{eq:proj:K}
    Y = \Proj{\Kscr} X \iff X-Y \in \del I_{\Kscr}(Y).
  \end{equation}
  The tangent cone \(T_X\Kscr\) of \(\Kscr\) at \(X \in \Kscr\) can then be defined similarly to the particle case \eqref{eq:TC:part} as
  \begin{equation}\label{eq:TK0}
    T_X\Kscr \coloneqq \mathrm{cl}\left\{\vartheta (Y-X) \colon Y \in \Kscr, \vartheta \ge 0 \right\}.
  \end{equation}
  Alternatively, it can be characterized as 
  the as the polar cone of the normal cone, that is,
  \begin{equation}\label{eq:TK1}
    T_X\Kscr = \left\{ U \in \Lp{2}(\Omega) \colon \ip{U}{W} \le 0 \ \forall \, W \in N_X\Kscr \right\}.
  \end{equation}
  It is convenient to introduce the following set, generalizing \(\Omega_{\bs{x}}\) in \eqref{eq:OX:part},
  \begin{equation}\label{eq:OX}
    \Omega_X \coloneqq \{ m \in \Omega \colon X \text{ is constant in a neighborhood of } m \},
  \end{equation}
  which can be thought of as the set of ``concentrated mass'' for \(\rho \in \Pscr(\R)\) corresponding to \(X_\rho = X\).
  Note that this set can be written as a countable union of disjoint intervals, \(\Omega_X = \sqcup_i (\alpha_i,\beta_i)\).
  Then we may equivalently characterize the tangent cone as follows, cf.\ \cite[Lemma 2.4]{brenier2013sticky},
  \begin{equation}\label{eq:TK2}
    T_X\Kscr = \left\{ U \in \Lp{2}(\Omega) \colon U \text{ is nondecreasing on each maximal interval } (\alpha, \beta) \subset \Omega_X \right\}.
  \end{equation}
  
  The following characterization of \(N_{X}\Kscr\) can be found in \cite[Lemma 2.3]{brenier2013sticky} and builds upon \cite[Theorem 3.9]{natile2009wasserstein}.
  \begin{lem}[Characterization of the normal cone \(N_X\Kscr\)]\label{lem:NX}
    Let \(X \in \Kscr\) and \(W \in \Lp{2}(\Omega)\) be given, and write
    \begin{equation}\label{eq:Xi}
      \Xi_W(m) \coloneqq \int_0^m W(\omega)\dee\omega \quad \text{for all } m \in [0,1]. 
    \end{equation}
    Then \(W \in N_X\Kscr\) if and only if \(\Xi_W \in \Nscr_X\), where \(\Nscr_X\) is the convex cone defined as
    \begin{equation*}
      \Nscr_X \coloneqq \left\{  \Xi \in C([0,1]) \colon \Xi \ge 0 \enspace\mathrm{in}\enspace [0,1] \enspace\mathrm{and}\enspace \Xi = 0 \enspace \mathrm{in}\enspace [0,1]\setminus\Omega_X \right\}.
    \end{equation*}
    In particular, for every \(X_1, X_2 \in \Kscr\) we have
    \begin{equation}\label{eq:OX&NX}
      \Omega_{X_1} \subset \Omega_{X_2} \implies N_{X_1}\Kscr \subset N_{X_2}\Kscr.
    \end{equation}
  \end{lem}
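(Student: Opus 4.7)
The argument hinges on an integration-by-parts identity combined with carefully chosen test elements of $\Kscr$. For any $W \in \Lp{2}(\Omega)$ and $Y \in \Kscr$, working on $[\epsilon, 1-\epsilon]$ and regarding $Y$ as a nondecreasing right-continuous function, I would first establish
\[
\int_\epsilon^{1-\epsilon} W\, Y\,\dee m = \bigl[\Xi_W Y\bigr]_\epsilon^{1-\epsilon} - \int_{(\epsilon,1-\epsilon]} \Xi_W\,\dee Y,
\]
where the second term is a Lebesgue--Stieltjes integral against the positive measure $\dee Y$. The boundary contributions vanish as $\epsilon \to 0$, since $\abs{\Xi_W(\epsilon)} \le \sqrt{\epsilon}\,\norm{W}_{\Lp{2}(0,\epsilon)} = o(\sqrt{\epsilon})$ by Cauchy--Schwarz, while the $\Lp{2}$-integrability and monotonicity of $Y$ force $\sqrt{\epsilon}\,\abs{Y(\epsilon)}\to 0$, and analogously at $m = 1$.

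For the sufficiency direction, suppose $\Xi_W \in \Nscr_X$; in particular $\Xi_W(0) = \Xi_W(1) = 0$ because $0, 1 \notin \Omega_X$. Applying the identity to $Y - X$ for arbitrary $Y \in \Kscr$ yields
\[
\int_\Omega W(Y-X)\,\dee m = -\int_{(0,1]} \Xi_W\,\dee Y + \int_{(0,1]} \Xi_W\,\dee X.
\]
Since $X$ is constant on $\Omega_X$, the measure $\dee X$ is supported in $[0,1]\setminus \Omega_X$, where $\Xi_W$ vanishes, so the second integral is zero; nonnegativity of $\Xi_W$ together with positivity of $\dee Y$ makes the first integral nonnegative. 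Hence $\int W(Y-X)\,\dee m \le 0$, proving $W \in N_X\Kscr$.

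For necessity, let $W \in N_X\Kscr$. As $\Kscr$ is a convex cone containing $X$, the standard normal-cone-to-a-cone argument (testing with $Y = 0$ and $Y = 2X$) gives $\ip{W}{X} = 0$ and $\ip{W}{Y} \le 0$ for all $Y \in \Kscr$. Testing with arbitrary constant functions (which belong to $\Kscr$) yields $c\,\Xi_W(1) \le 0$ for every $c \in \R$, so $\Xi_W(1) = 0$; testing next with $Y = \mathbf{1}_{[m_0,1)} \in \Kscr$ gives $\Xi_W(m_0) \ge \Xi_W(1) = 0$ for every $m_0 \in (0,1)$, hence $\Xi_W \ge 0$ on $[0,1]$. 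Integration by parts applied to $\ip{W}{X} = 0$ produces $\int_{(0,1]}\Xi_W\,\dee X = 0$, and combined with $\Xi_W \ge 0$ and $\dee X \ge 0$ this forces $\Xi_W$ to vanish on $\mathrm{supp}(\dee X)$; by construction this support coincides with $[0,1]\setminus \Omega_X$, and continuity of $\Xi_W$ together with the openness of $\{\Xi_W > 0\}$ propagates the vanishing to the full closed set $[0,1]\setminus \Omega_X$.

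The monotonicity property \eqref{eq:OX&NX} is then immediate: if $\Omega_{X_1} \subset \Omega_{X_2}$, then $[0,1]\setminus \Omega_{X_2} \subset [0,1]\setminus \Omega_{X_1}$, so any continuous function vanishing on the latter vanishes a fortiori on the former, i.e., $\Nscr_{X_1} \subset \Nscr_{X_2}$, which transports through the equivalence to $N_{X_1}\Kscr \subset N_{X_2}\Kscr$. The main obstacle I anticipate lies in the careful treatment of the Lebesgue--Stieltjes boundary terms as $\epsilon \to 0$ when $Y, X$ are merely $\Lp{2}$ rather than bounded, together with the identification $\mathrm{supp}(\dee X) = [0,1]\setminus \Omega_X$, which must be justified in full generality (including when $X$ is continuous and strictly increasing on intervals with no flat part).
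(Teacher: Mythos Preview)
The paper does not actually prove this lemma; it is stated with the attribution ``can be found in \cite[Lemma 2.3]{brenier2013sticky} and builds upon \cite[Theorem 3.9]{natile2009wasserstein}'' and no proof is given. Your proposal therefore goes beyond what the paper does by supplying a self-contained argument, and the route you take---integration by parts against the Stieltjes measure $\dee Y$, testing with constants and with indicators $\mathbf{1}_{[m_0,1)}$, and exploiting the orthogonality $\ip{W}{X}=0$ coming from the cone structure---is essentially the standard one underlying those references.

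Your argument is correct. A few small points worth tightening: the claim $\sqrt{\epsilon}\,|Y(\epsilon)|\to 0$ should be split into the two cases $Y(0+)>-\infty$ (where $Y(\epsilon)$ is bounded) and $Y(0+)=-\infty$ (where monotonicity gives $\epsilon\,Y(\epsilon)^2\le \int_0^\epsilon Y^2$), and the phrasing ``$\Xi_W$ vanishes on $\mathrm{supp}(\dee X)$'' is slightly misleading---what you actually need and use is that the open set $\{\Xi_W>0\}$ has $\dee X$-measure zero, hence $X$ is constant on each of its components, hence $\{\Xi_W>0\}\subset\Omega_X$. The identification $\mathrm{supp}(\dee X)=[0,1]\setminus\Omega_X$ is true (since $\Omega_X$ is precisely the maximal open set on which $X$ is locally constant) but is not strictly required for the conclusion.
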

  An example of a function belonging to the convex cone \(\Nscr_X\) is illustrated in Figure \ref{fig:NXillust}.
  \begin{figure}
    \includegraphics[width=0.8\linewidth]{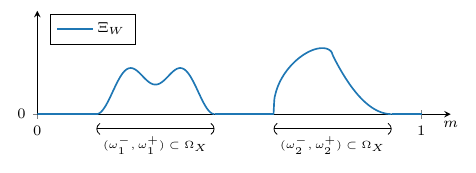}
    \caption{An \(\Xi_{W} \in \Nscr_X\) where \(\Omega_{X} = (\omega_1^-,\omega_1^+) \sqcup (\omega_2^-,\omega_2^+) \).}
    \label{fig:NXillust}
  \end{figure}
  Based on \eqref{eq:OX} we introduce the closed subspace \(\Hscr_X \subset \Lp{2}(\Omega)\) given by
  \begin{equation}\label{eq:HX}
    \Hscr_X \coloneqq \left\{ U \in \Lp{2}(\Omega) \colon U \text{ is constant on each interval } (m_l,m_r) \subset \Omega_X \right\},
  \end{equation}
  for which we have the implications
  \begin{equation}\label{eq:OX&HX}
    \Omega_{X_1} \subset \Omega_{X_2} \implies \Hscr_{X_2} \subset \Hscr_{X_1}, \qquad X_1, X_2 \in \Kscr
  \end{equation}
  and
  \begin{equation*}
    U \in \Hscr_{X} \iff \pm U \in T_{X}\Kscr.
  \end{equation*}
  We can define the projection of \(f \in \Lp{2}(\Omega)\) onto the subspace \eqref{eq:HX} as
  \begin{equation}\label{eq:proj:HX}
    \Proj{\Hscr_X}f = \begin{cases}
      \dint_{\alpha}^{\beta}f(\omega)\dee\omega \coloneqq \frac{1}{\beta-\alpha}\int_{\alpha}^{\beta}f(\omega)\dee\omega, & m \in (\alpha,\beta) \text{ a maximal interval of } \Omega_{X}, \\
      f, & \text{for a.e. } m \in \Omega\setminus\Omega_{X}.
    \end{cases}
  \end{equation}
  From \cite[Lemma 2.6]{brenier2013sticky} we have
  \begin{equation}\label{eq:HX&TX&NX}
    X \in \Kscr, \quad U \in T_{X}\Kscr \implies \Proj{\Hscr_{X}}U-U \in N_{X}\Kscr
  \end{equation}
  
  The characterization \eqref{eq:TK2} of the tangent cone \(T_{X}\Kscr\) leads us to consider the closed, convex cone defined by
  \begin{equation}\label{eq:coneab}
    \Kscr_{(\alpha,\beta)} \coloneqq \left\{ X \in \Lp{2}((\alpha,\beta)) \colon X \text{ is nondecreasing and right-continuous}\right\}
  \end{equation}
  for \((\alpha,\beta)\subset(0,1)\), such that \(\Kscr_{(0,1)} = \Kscr\).
  In \cite[Theorem 3.1]{natile2009wasserstein}, a characterization of the projection of \(f \in \Lp{2}(\Omega)\) onto \(\Kscr\) is given in terms of the right-derivative of its primitive's lower convex envelope on \(\Omega = (0,1)\).
  We recall that the lower convex envelope \(F^{**}_{(\alpha,\beta)}\) of a function \(F \in C([\alpha,\beta],\R)\) is defined as
  \begin{equation}\label{eq:lce}
    F^{**}_{(\alpha,\beta)}(m) = \sup\left\{a + b m \colon a, b \in \R, \ a + b \omega \le F(\omega) \ \forall \: \omega \in [\alpha,\beta]\right\}, \quad m \in [\alpha,\beta].
  \end{equation}
  This is the greatest bounded, (lower semi-)continuous and convex function \(G\) satisfying \(G \le F\) in \([\alpha,\beta]\); hence it is left- and right-differentiable for every \(m \in (\alpha,\beta)\), and its right-derivative \(\diff{^+}{m}F^{**}\) is nondecreasing and right-continuous.
  We note that the notation involving \(**\) alludes to the fact that the lower convex envelope in this case coincides with the biconjugate, or twice the Legendre--Fenchel transform, of \(F\).
  However, there is nothing in the proof of \cite[Theorem 3.1]{natile2009wasserstein} which relies on the domain being \((0,1)\); hence, with the appropriate changes we have the following result.
  \begin{prp}[Projection on \(\Kscr_{(\alpha,\beta)}\)]\label{prp:PK}
    Let \(f \in \Lp{2}((\alpha,\beta))\) for \((\alpha,\beta)\subset(0,1)\) and let \(F(m) = \int_{\alpha}^{m} f(\omega)\dee\omega\).
    Then
    \begin{equation}\label{eq:PK3}
      \Proj{\Kscr_{(\alpha,\beta)}}f = \diff{^+}{m}F^{**}_{(\alpha,\beta)},
    \end{equation}
    where \(F^{**}_{(\alpha,\beta)}\) is the lower convex envelope in \eqref{eq:lce}.
    Moreover, for any convex, lower semi-continuous function \(\phiv \colon \R \to (-\infty,+\infty]\) and \(f, g \in \Lp{2}((\alpha,\beta))\) we have
    \begin{equation*}
      \int_{\alpha}^{\beta} \phiv\left(\Proj{\Kscr_{(\alpha,\beta)}}f - \Proj{\Kscr_{(\alpha,\beta)}}g\right)\dee m \le \int_{\alpha}^{\beta} \phiv(f - g)\dee m.
    \end{equation*}
    In particular, \(\Proj{\Kscr_{(\alpha,\beta)}}\) is a contraction in every space \(\Lp{p}((\alpha,\beta))\), \(p \in [1,\infty]\).
  \end{prp}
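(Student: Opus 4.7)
The cleanest route is reduction to the known case \cite[Theorem 3.1]{natile2009wasserstein} by an affine change of variable. For $m = \alpha + (\beta-\alpha)\omega$, the map $f \mapsto \tilde f(\omega) := f(\alpha + (\beta-\alpha)\omega)$ is a bijection between $\Lp{2}((\alpha,\beta))$ and $\Lp{2}((0,1))$ (scaling norms by a fixed factor); it carries $\Kscr_{(\alpha,\beta)}$ onto $\Kscr_{(0,1)}$, commutes with the lower convex envelope, and transports $\varphi(\Proj{\Kscr_{(\alpha,\beta)}}f - \Proj{\Kscr_{(\alpha,\beta)}}g)$ into an analogous expression on $(0,1)$. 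It therefore suffices to check that every step in the Natile--Savar\'e proof is parametrization-independent and then to transport the resulting formulas back to $(\alpha,\beta)$.

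For a direct verification, I would proceed as follows. Set $Y := \diff{^+}{m}F^{**}_{(\alpha,\beta)}$; by convexity of the envelope, $Y$ is nondecreasing and right-continuous, and $Y \in \Lp{2}((\alpha,\beta))$ follows from the existence of the Hilbert projection combined with the identification below. To prove $Y = \Proj{\Kscr_{(\alpha,\beta)}}f$, by \eqref{eq:proj:C} it suffices to check
\begin{equation*}
\int_\alpha^\beta (f-Y)(Z-Y)\,\dee m \le 0 \quad \text{for every } Z \in \Kscr_{(\alpha,\beta)}.
\end{equation*}
Let $H := F - F^{**}_{(\alpha,\beta)}$; then $H \ge 0$ on $[\alpha,\beta]$ with $H(\alpha)=H(\beta)=0$, and on each component of the open set $\{H>0\}$ the envelope is affine, so the Stieltjes measure $\dee Y$ vanishes there. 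Since $H' = f - Y$ distributionally, integration by parts gives
\begin{equation*}
\int_\alpha^\beta (f-Y)(Z-Y)\,\dee m = \bigl[H(Z-Y)\bigr]_\alpha^\beta - \int_\alpha^\beta H\,\dee(Z-Y) = -\int_{\{H>0\}} H\,\dee Z \le 0,
\end{equation*}
where the boundary term vanishes by $H(\alpha)=H(\beta)=0$, the $\dee Y$-contribution drops on $\{H>0\}$, and the final sign comes from $H \ge 0$ and $Z$ nondecreasing. This yields \eqref{eq:PK3}.

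For the $\varphi$-inequality I would discretize: approximate $f, g \in \Lp{2}((\alpha,\beta))$ by step functions $f_n, g_n$ on a refining partition of $(\alpha,\beta)$. On such step data the projection onto $\Kscr_{(\alpha,\beta)}$ reduces to a finite-dimensional projection onto a cone of the type $\K^N$ from Section \ref{s:particles}, for which the desired inequality follows from Jensen's inequality applied to the barycentric averaging in \eqref{eq:barycentric}. The $\Lp{2}$-contractivity of the projection together with lower semicontinuity of $X \mapsto \int_\alpha^\beta \varphi(X)\,\dee m$ (via Fatou after subtracting an affine minorant of $\varphi$) then passes the inequality to the limit; specializing to $\varphi(\cdot) = \abs{\cdot}^p$ gives the $\Lp{p}$-contraction.

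The main obstacle, in my view, is the $\varphi$-statement rather than \eqref{eq:PK3}: it is genuinely stronger than a Hilbert-space projection estimate, and the cleanest proof seems to require the discretize-and-pass-to-the-limit argument rather than direct manipulation of the envelope formula. Fortunately the discrete case is exactly the barycentric averaging already identified in Section \ref{s:particles}, so the continuous statement reflects a built-in consistency between the particle dynamics and the Lagrangian framework developed in the sequel.
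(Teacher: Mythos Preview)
Your proposal is correct and aligned with the paper's approach: the paper does not give a standalone proof but simply observes that nothing in the proof of \cite[Theorem~3.1]{natile2009wasserstein} depends on the domain being \((0,1)\), so the result carries over to \((\alpha,\beta)\) with cosmetic changes. Your affine change of variable is an explicit and clean way to effect this reduction, and your direct verification via integration by parts together with the discretize-and-pass-to-the-limit argument for the \(\varphi\)-inequality are essentially the ingredients of the original Natile--Savar\'e proof, so you are supplying more detail than the paper itself does.

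One minor caution on the integration-by-parts step: when \(Z,Y\in\Kscr_{(\alpha,\beta)}\) are merely in \(\Lp{2}\) and not bounded, the boundary term \([H(Z-Y)]_\alpha^\beta\) is formally of the type \(0\cdot\infty\), since \(Z\) and \(Y\) may diverge at the endpoints while \(H\) vanishes there. A monotone \(\Lp{2}\) function satisfies \(\abs{Z(m)}\lesssim (m-\alpha)^{-1/2}\) near \(\alpha\), and \(H\), being the difference of two \(C^{0,1/2}\) functions, is only \(O((m-\alpha)^{1/2})\), so the product is bounded but need not tend to zero. This is handled in \cite{natile2009wasserstein} by first treating bounded (or step) data and then passing to the limit, which is exactly your discretization strategy; just be aware that the direct integration-by-parts argument as written needs this approximation step to be fully rigorous.
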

  A useful auxiliary result for proving the above result is \cite[Lemma 3.2]{natile2009wasserstein}, which below is appropriately modified to match our setting.
  \begin{lem}\label{lem:PK:props}
    For any \(f \in \Lp{2}((\alpha,\beta))\) and \(F \in C([\alpha,\beta],\R)\) as defined in Proposition \ref{prp:PK}, \(F^{**}_{(\alpha,\beta)}\) is continuous on \([\alpha,\beta]\), locally Lipschitz on \((\alpha,\beta)\), and coincides with \(F\) for \(m = \alpha, \beta\). If moreover \(f \in \Lp{\infty}((\alpha,\beta))\), then \(F\) and \(F^{**}_{(\alpha,\beta)}\) are Lipschitz-continuous on \([\alpha,\beta]\).
  \end{lem}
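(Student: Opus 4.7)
The plan is to combine the definition of the lower convex envelope as a pointwise supremum of affine minorants with a few standard facts about convex functions on intervals, and to handle the Lipschitz case via a monotonicity trick.

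First, since the interval is bounded, $f \in L^2((\alpha,\beta)) \subset L^1((\alpha,\beta))$, so $F$ is absolutely continuous on $[\alpha,\beta]$. The inequality $F^{**}_{(\alpha,\beta)} \le F$ is immediate from the definition \eqref{eq:lce} by taking $\omega = m$ in the constraint $a + b\omega \le F(\omega)$. For the reverse inequality at $m = \alpha$, I would, for any $\varepsilon > 0$, exhibit an affine minorant of $F$ on $[\alpha,\beta]$ whose value at $\alpha$ is $F(\alpha) - \varepsilon$. Setting $a := F(\alpha) - \varepsilon - b\alpha$, the constraint reduces to $b(\omega - \alpha) \le F(\omega) - F(\alpha) + \varepsilon$ for $\omega \in (\alpha,\beta]$. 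The function $\omega \mapsto (F(\omega) - F(\alpha) + \varepsilon)/(\omega - \alpha)$ is continuous on $(\alpha,\beta]$, tends to $+\infty$ as $\omega \to \alpha^+$ by continuity of $F$ together with $\varepsilon > 0$, and is therefore bounded below on $(\alpha,\beta]$; choosing $b$ less than or equal to this infimum gives a valid affine minorant. Hence $F^{**}_{(\alpha,\beta)}(\alpha) \ge F(\alpha) - \varepsilon$ for every $\varepsilon > 0$, and a symmetric argument handles $m = \beta$.

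Next, I would show continuity and local Lipschitz regularity. Being a supremum of affine functions, $F^{**}_{(\alpha,\beta)}$ is convex, hence continuous and locally Lipschitz on the open interval $(\alpha,\beta)$ by the standard monotonicity of difference quotients for convex functions. For continuity at the endpoints, $F^{**}_{(\alpha,\beta)}$ is lower semicontinuous as a supremum of continuous functions, while the inequality $F^{**}_{(\alpha,\beta)} \le F$ combined with the boundary equalities above and continuity of $F$ yields upper semicontinuity at $\alpha$ and $\beta$; thus $F^{**}_{(\alpha,\beta)} \in C([\alpha,\beta])$.

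For the last assertion, suppose $f \in L^\infty((\alpha,\beta))$ and set $L := \|f\|_{L^\infty((\alpha,\beta))}$, so that $F$ is $L$-Lipschitz on $[\alpha,\beta]$. The key observation is that the map $G \mapsto G^{**}_{(\alpha,\beta)}$ commutes with addition of affine functions, since adding $cm + d$ merely reparametrizes the set of affine minorants; in particular, $(F - Lm)^{**}_{(\alpha,\beta)} = F^{**}_{(\alpha,\beta)} - Lm$. Now $m \mapsto F(m) - Lm$ is nonincreasing because its weak derivative $f - L$ is $\le 0$ a.e., and any affine minorant $a + bm$ of a nonincreasing function must itself have nonpositive slope (subtract the constraint at two points $m_1 < m_2$). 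Hence the supremum $(F - Lm)^{**}_{(\alpha,\beta)}$ is also nonincreasing, giving that $F^{**}_{(\alpha,\beta)}(m) - Lm$ is nonincreasing. The symmetric argument applied to $F(m) + Lm$ (which is nondecreasing) yields that $F^{**}_{(\alpha,\beta)}(m) + Lm$ is nondecreasing; combining the two gives the $L$-Lipschitz bound on $[\alpha,\beta]$.

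The routine parts are the convex-analytic facts recalled in the second paragraph; the one step requiring care is the boundary identification in the first paragraph, where one must use continuity of $F$ and the presence of the slack $\varepsilon$ to ensure the existence of affine minorants of $F$ with arbitrarily steep negative slopes that are nearly optimal at $\alpha$. The monotonicity trick in the third paragraph is the most efficient route to the global Lipschitz bound and avoids having to separately control the one-sided derivatives of $F^{**}_{(\alpha,\beta)}$ at the endpoints.
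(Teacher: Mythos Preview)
The paper does not give its own proof of this lemma; it simply records it as the straightforward adaptation of \cite[Lemma~3.2]{natile2009wasserstein} to a general interval \((\alpha,\beta)\). So there is nothing to compare against, and your argument stands on its own. The first two paragraphs are correct and standard.

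There is, however, a genuine slip in the Lipschitz step. You assert that every affine minorant \(a+bm\) of a nonincreasing function \(G\) on \([\alpha,\beta]\) must have \(b\le 0\), justified by ``subtracting the constraint at two points''. That inference is invalid (from \(a+bm_1\le G(m_1)\) and \(a+bm_2\le G(m_2)\) you cannot subtract to bound \(b\)), and the claim itself is false: take \(G\equiv 1\) on \([0,1]\), which admits the minorant \(m\mapsto m\) with slope \(1\).

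The fix is immediate and keeps your structure intact. If \(G\) is nonincreasing and \(a+bm\le G(m)\) on \([\alpha,\beta]\) with \(b>0\), then the constant function \(m\mapsto a+b\beta\) is also an affine minorant of \(G\) (since \(a+b\beta\le G(\beta)\le G(m)\) for all \(m\)) and dominates \(a+bm\) pointwise on \([\alpha,\beta]\). Hence in the supremum defining \(G^{**}_{(\alpha,\beta)}\) one may restrict to slopes \(b\le 0\), so \(G^{**}_{(\alpha,\beta)}\) is a supremum of nonincreasing affine functions and therefore nonincreasing. With this correction (and its mirror for \(F(m)+Lm\)), your monotonicity trick goes through and yields the \(L\)-Lipschitz bound as claimed.
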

  
  As a consequence of the above results, for any \(f \in \Lp{2}(\Omega)\) we can consider its restriction to  \((\alpha,\beta) \subset \Omega\), namely \(f\vert_{(\alpha,\beta)} \in \Lp{2}((\alpha,\beta))\), and slightly abusing notation we will still denote this by \(f\).
  Hence, when we write \(\Proj{\Kscr_{(\alpha,\beta)}}f \in \Lp{2}((\alpha,\beta))\), we mean the projection of \(f\vert_{(\alpha,\beta)}\) on \(\Kscr_{(\alpha,\beta)}\) from \eqref{eq:coneab}.
  Then, if we identify \(\Kscr = \Kscr_{(0,1)}\) and \(F^{**} = F^{**}_{(0,1)}\) in the above, we recover the original results.
  
  Now, combining the characterization \eqref{eq:TK2}, \eqref{eq:coneab} and Proposition \ref{prp:PK}, we see that for \(X \in \Kscr\) and \(f \in \Lp{2}(\Omega)\), the projection of \(f\) on \(T_X\Kscr\) can be written as
  \begin{equation}\label{eq:proj:TX}
    \Proj{T_X\Kscr}f = \begin{cases}
      \Proj{\Kscr_{(\alpha,\beta)}}f, & m \in (\alpha,\beta) \text{ a maximal interval of } \Omega_{X}, \\
      f, & \text{for a.e. } m \in \Omega\setminus\Omega_{X}.
    \end{cases}
  \end{equation}
  Let us write \(F(m) = \int_{0}^{m}f(\omega)\dee\omega\), \(F_{(\alpha,\beta)} = F(\alpha) + \int_{\alpha}^{m}f(\omega)\dee\omega\) and let \(F_{(\alpha,\beta)}^{**}\) be the lower convex envelope of \(F_{(\alpha,\beta)}\), as in \eqref{eq:lce}, such that \(F_{(\alpha,\beta)}^{**} \le F_{(\alpha,\beta)}\).
  Then it follows from \eqref{eq:PK3} and Lemma \ref{lem:PK:props} that
  \begin{equation*}
    F_{(\alpha,\beta)}(\beta) - F_{(\alpha,\beta)}(\alpha) = F_{(\alpha,\beta)}^{**}(\beta)-F_{(\alpha,\beta)}^{**}(\alpha) = \int_{\alpha}^{\beta}\Proj{\Kscr_{(\alpha,\beta)}}f \dee m.
  \end{equation*}
  Consequently, since \(\Omega_X = \sqcup_i(\alpha_i,\beta_i)\), it follows from \eqref{eq:proj:TX} that
  \begin{equation*}
    F(m) \ge \int_{0}^{m} \Proj{T_X\Kscr}f\dee \omega = \begin{cases}
      F_{(\alpha,\beta)}^{**}(m), & m \in (\alpha,\beta) \text{ a maximal interval of } \Omega_{X}, \\
      F(m), & m \in [0,1]\setminus\Omega_{X}.
    \end{cases}
  \end{equation*}
  In particular, recalling \eqref{eq:proj:HX}, we find
  \begin{equation*}
    \int_{\Omega} \Proj{\Kscr}f\dee m = \int_{\Omega} \Proj{T_X\Kscr}f\dee m = \int_{\Omega} \Proj{\Hscr_{X}}f\dee m = \int_{\Omega}f\dee m.
  \end{equation*}
  That is, although the projections onto the convex cones \(\Kscr\), \(T_X\Kscr\) and \(\Hscr_X\) are contractions in the \(\Lp{2}(\Omega)\)-norm, they do not change the average of the function being projected.
  Such considerations will turn out useful when studying clustering properties in Section \ref{s:cluster}.
  
  Figure \ref{fig:proj} provides an illustration of these projections and their primitives for a function \(f\) and a set \(\Omega_X\) consisting of three intervals on which \(f\) is respectively increasing, decreasing, and neither of the two.
  Here we can also observe the ordering of the primitives of the projections.
  \begin{figure}
    \includegraphics[width=0.8\linewidth]{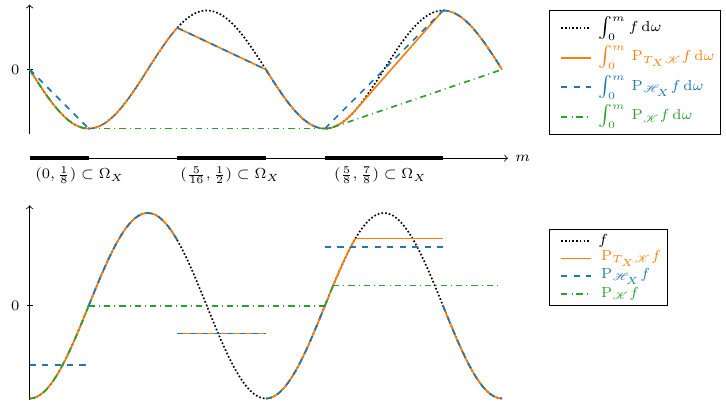}
    \caption{The various projections of \(f(m) = -4\pi\cos\left(4\pi m\right)\) and their primitives for \(\Omega_X = (0,\frac18) \sqcup (\frac{5}{16},\frac12) \sqcup (\frac58,\frac78)\). We observe that the primitive of \(\Proj{\Kscr}f\) coincides with the lower convex envelope of \(\int_{0}^{m}f(\omega)\dee\omega = -\sin(4\pi m)\) on \([0,1]\).}
    \label{fig:proj}
  \end{figure}
  
  \begin{rem}\label{rem:intCone}
    Let \(\Cscr\) be a closed, convex subset of \(\Lp{2}(\Omega)\) and \(X \in \Lp{2}((0,T);\Lp{2}(\Omega))\) such that \(X(t) \in \Cscr\) for a.e.\ \(t \in (0,T)\).
    Then Jensen's inequality yields
    \begin{equation}
      \dint_{0}^{T}X(t)\dee t \in \Cscr, \quad \text{and} \quad \int_{0}^{T}X(t)\dee t \in \Cscr \quad \text{if \(\Cscr\) is a cone.}
    \end{equation}
  \end{rem}
  
  \section{Lagrangian solutions of the Euler-alignment system}\label{s:Lag}
  We recall the assumption \(\bar{\rho} \in \Pscr_2(\R)\), for which we use equations \eqref{eq:M} and \eqref{eq:X} to define the following monotone rearrangement \(\bar{X} \in \Lp{2}(\Omega)\) satisfying \(\bar{X}_\#\mathfrak{m} = \bar{\rho}\).
  Further recalling \(\bar{v} \in \Lp{2}(\R,\bar{\rho})\) as well as \(\psi_t\) defined in \eqref{eq:psi}, we introduce the quantities \(\bar{V}\in \Lp{2}(\Omega)\) and \(\bar{\Psi} \coloneqq \Psi[\bar{X},\bar{V}]\) defined through
  \begin{equation}\label{eq:PsiXV}
    \Psi[X,V] \coloneqq V + \int_{\Omega} \Phi(X-X(\omega))\dee\omega = V + \Phi\ast\rho(X), \ \text{where} \ X_{\#}\mathfrak{m} = \rho.
  \end{equation}
  Then it also follows, recall \eqref{eq:Phi:linbnd}, that \(\bar{\Psi} \in \Lp{2}(\Omega)\).
  Here \(\bar{X}\) and \(\bar{V}\) respectively play the roles of initial position and velocity, in analogy with  \(\bar{\bs{x}}\) and \(\bar{\bs{v}}\) from Section \ref{s:particles}.
  Keeping in mind our original problem \eqref{eq:EA} and its initial data \(\bar{\rho}\) and \(\bar{v}\), a natural choice for \(\bar{V}\) is \(\bar{v}\circ\bar{X}\); we return to this point in Remark \ref{rem:initvel}.
  Likewise, the quantity \(\bar{\Psi}\), analogous to \(\bar{\bs{\psi}}\) from Section \ref{s:particles}, represents the natural velocity.
  Mimicking the arguments in the particle case, we arrive at a first-order differential inclusion corresponding to \eqref{eq:EA:DI:part},
  \begin{equation}\label{eq:EA:DI}
    \dot{X}_t + \del I_{\Kscr}(X_t) \ni \bar{\Psi} - \Phi\ast\rho_t(X_t).
  \end{equation}
  We will always use the isometry between \(\Kscr \subset \Lp{2}(\Omega)\) and \(\Pscr_2(\R)\) to have \(X_{t \#}\mathfrak{m} = \rho_t\).
  The relation \eqref{eq:EA:DI} provides us with an associated ``prescribed'' velocity
  \begin{equation}\label{eq:U}
    U[X_t] \coloneqq \bar{\Psi} - \int_\Omega \Phi(X_t-X_t(\omega))\dee\omega \in \Lp{2}(\Omega),
  \end{equation}
  such that \(U[\bar{X}] = \bar{V}\).
  Observe that \(U[X_t]\) is the velocity \(X_t \in \Kscr\) would have if \(\del I_\Kscr(X_t) = \{0\}\), i.e., \(T_{X_t}\Kscr = \Lp{2}(\Omega)\); that is, in absence of mass concentration.
  
  The next result ensures that strong convergence of \(X\) in \(\Lp{2}(\Omega)\) yields strong convergence of \(\Phi\ast\rho(X)\).
  
  \begin{lem}[Uniform continuity of \(\Phi\ast\rho\)]\label{lem:PhiConv:unicont}
    Assume \(X_i \in \Kscr\) and \(\rho_i \in \Pscr_2(\R)\) with \(\rho_i = X_{i\#}\mathfrak{m}\) for \(i\in \{1,2\}\). Then there exists a modulus of continuity \(\omega_{\Phi}\), depending only on \(\Phi\), such that
    \begin{align*}
      \norm{(\Phi\ast\rho_1)\circ X_1 - (\Phi\ast\rho_2)\circ X_2}_{\Lp{2}(\Omega)} &\le \omega_{\Phi}(\norm{X_1-X_2}_{\Lp{2}(\Omega)}),
    \end{align*}
    or equivalently, using the semidistance \(U_2\) \eqref{eq:Up} and the Kantorovich--Rubinstein metric \(W_2\) \eqref{eq:Wp},
    \begin{equation*}
      U_2((\rho_1,\Phi\ast\rho_1),(\rho_2,\Phi\ast\rho_2)) \le \omega_{\Phi}(W_2(\rho_1,\rho_2)).
    \end{equation*}
  \end{lem}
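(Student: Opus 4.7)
The plan is to reduce the estimate to controlling $\int_{\Omega}\Phi(|X_1-X_2|)^2\dee\omega$ via a pointwise bound on the integrand, then construct the modulus $\omega_\Phi$ by splitting $\Omega$ into a region where $|X_1-X_2|$ is small (controlled by the modulus property of $\Phi$) and one where it is large (controlled by Chebyshev's inequality combined with the linear bound in Lemma \ref{lem:Phi:props}(c)). If $\phi\equiv 0$ then $\Phi\equiv 0$ and the statement is trivial, so throughout assume $\phi\not\equiv 0$, so that $\Phi$ is itself a strict modulus of continuity by Lemma \ref{lem:Phi:props}(b).

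First, using $\rho_i = X_{i\#}\mathfrak{m}$ to write $(\Phi\ast\rho_i)(X_i(m)) = \int_\Omega \Phi(X_i(m) - X_i(\omega))\dee\omega$, I would establish the pointwise bound
\begin{equation*}
  |\Phi(X_1(m) - X_1(\omega)) - \Phi(X_2(m) - X_2(\omega))| \le \Phi(|X_1(m) - X_2(m)|) + \Phi(|X_1(\omega) - X_2(\omega)|)
\end{equation*}
for all $m,\omega \in \Omega$. The argument has two ingredients: monotonicity of $X_1, X_2 \in \Kscr$ forces $X_1(m)-X_1(\omega)$ and $X_2(m)-X_2(\omega)$ to share a sign; combining oddness of $\Phi$ with the fact that $\phi$ is nonincreasing on $[0,\infty)$ gives $|\Phi(a)-\Phi(b)| \le \Phi(|a-b|)$ whenever $a,b$ share a sign; subadditivity of $\Phi$ (Lemma \ref{lem:Phi:props}(b)) then splits the right-hand side. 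Integrating in $\omega$, squaring, using $(a+b)^2 \le 2a^2 + 2b^2$, and applying Cauchy--Schwarz on the cross term yields
\begin{equation*}
  \norm{(\Phi\ast\rho_1)\circ X_1 - (\Phi\ast\rho_2)\circ X_2}_{\Lp{2}(\Omega)}^2 \le 4\int_\Omega \Phi(|X_1 - X_2|)^2 \dee\omega.
\end{equation*}

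With $h \coloneqq |X_1 - X_2|$ and $\eta \coloneqq \norm{h}_{\Lp{2}(\Omega)}$, I would then split at level $\delta > 0$. Monotonicity of $\Phi$ bounds the contribution from $\{h\le\delta\}$ by $\Phi(\delta)^2$, while on $\{h>\delta\}$ the linear bound $\Phi(h) \le \Phi(1)(1+h)$ from Lemma \ref{lem:Phi:props}(c) combined with Chebyshev's inequality $\meas\{h>\delta\} \le \eta^2/\delta^2$ gives a contribution bounded by $2\Phi(1)^2(\eta^2/\delta^2 + \eta^2)$. Choosing, e.g., $\delta = \eta^{1/2}$ produces the candidate modulus
\begin{equation*}
  \omega_\Phi(\eta) \coloneqq 2\sqrt{\Phi(\eta^{1/2})^2 + 2\Phi(1)^2(\eta + \eta^2)},
\end{equation*}
which is continuous, nondecreasing, and vanishes as $\eta \to 0$ by continuity of $\Phi$ at $0$; hence it is a valid modulus of continuity depending only on $\Phi$.

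The main obstacle is the sign condition in the pointwise bound: the inequality $|\Phi(a)-\Phi(b)| \le \Phi(|a-b|)$ fails when $a,b$ have opposite signs, since oddness of $\Phi$ gives $|\Phi(a)-\Phi(b)| = \Phi(|a|)+\Phi(|b|)$ in that case, which is generally larger than $\Phi(|a-b|) = \Phi(|a|+|b|)$. The observation that rescues the argument is that the common monotonicity of $X_1$ and $X_2$ forces both differences to have the same sign, keeping us on a single concave branch of $\Phi$ where the standard modulus-of-continuity estimate is valid.
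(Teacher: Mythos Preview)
Your argument is correct and takes a genuinely different route from the paper. The paper does not use the common monotonicity of $X_1,X_2$ at all; instead it inserts the intermediate term $(\Phi\ast\rho_1)\circ X_2$ and bounds the two resulting pieces separately via the pointwise convolution estimate $|(\Phi\ast\rho)(x_2)-(\Phi\ast\rho)(x_1)|\le 2\Phi(|x_2-x_1|/2)$, which follows from $\phi$ being radially nonincreasing and holds for arbitrary $\rho\in\Pscr(\R)$ and $x_1,x_2\in\R$. Your approach, by contrast, works directly on the integrand and exploits that $X_1,X_2\in\Kscr$ forces $X_1(m)-X_1(\omega)$ and $X_2(m)-X_2(\omega)$ to share a sign, so that Lemma~\ref{lem:Phi:props}(d) applies on a single concave branch of $\Phi$. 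This is slicker and avoids the triangle-inequality split, but it is specific to the cone $\Kscr$; the paper's estimate would still hold for general $X_i\in L^2(\Omega)$. The final step---controlling $\int_\Omega\Phi(|X_1-X_2|)^2$ by splitting on the size of $|X_1-X_2|$ and using the linear bound~\eqref{eq:Phi:linbnd}---is essentially the same in both proofs, though the paper splits at the fixed level $2$ and uses Jensen, while you split at $\delta=\eta^{1/2}$ and use Chebyshev.
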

  \begin{proof}
    We will make use of the following estimate; let \(x_1, x_2 \in \R\), then for \(\rho \in \Pscr(\R)\) we find
    \begin{equation}\label{eq:convbnd}
      \begin{aligned}
        \abs{(\Phi\ast\rho)(x_2) - (\Phi\ast\rho)(x_1)} &\le \int_{\R}\abs*{\int^{x_2}_{x_1}\phi(x-y)\dee x}\dee\rho(y) \\
        &\le \int_{\R}\abs*{\int^{x_2}_{x_1}\phi\left(x-\frac{x_1+x_2}{2}\right)\dee x}\dee\rho(y) = 2 \Phi\left(\frac{\abs{x_2-x_1}}{2}\right).
      \end{aligned}
    \end{equation}
    Here the second inequality follows from \(\phi\) being symmetric and radially decreasing; indeed, the quantity \(\abs{\int^{x_2}_{x_1}\phi(x-y)\dee x}\) attains its maximal value at the midpoint \(y = \frac12(x_1+x_2)\).
    Now we estimate
    \begin{equation*}
      \norm{(\Phi\ast\rho_1)\circ X_1 - (\Phi\ast\rho_2)\circ X_2}_{\Lp{2}(\Omega)}^2 \le 2(I_1 + I_2) 
    \end{equation*}
    where
    \begin{equation*}
      I_1 \coloneqq \norm{(\Phi\ast\rho_1)\circ X_2 - (\Phi\ast\rho_2)\circ X_2}_{\Lp{2}(\Omega)}^2, \qquad I_2 \coloneqq \norm{(\Phi\ast\rho_1)\circ X_1 - (\Phi\ast\rho_1)\circ X_2}_{\Lp{2}(\Omega)}^2, \qquad .
    \end{equation*}
    Then, from \eqref{eq:convbnd}, the concavity of \(\Phi\) and Jensen's inequality we obtain
    \begin{align*}
      I_1 &\le \int_{\Omega}\left(\int_{\Omega} \abs*{\Phi(X_2(m)-X_1(\omega))-\Phi(X_2(m)-X_2(\omega))}\dee\omega\right)^2\dee m \\
      &\le \int_{\Omega}\left(2\int_{\Omega}\Phi\left(\frac{\abs{X_1(\omega)-X_2(\omega)}}{2}\right)\dee\omega\right)^2\dee m \le 4 \Phi\left(\frac12\norm{X_1-X_2}_{\Lp{1}(\Omega)}\right)^2.
    \end{align*}
    On the other hand, defining \(\Omega_- \coloneqq \{m \in \Omega \colon \abs{X_1-X_2} \le 2\}\) and  \(\Omega_+ \coloneqq \Omega\setminus\Omega_-\), the same inequalities together with \eqref{eq:Phi:linbnd} yield
    \begin{align*}
      I_2 &\le 4 \int_{\Omega} \Phi\left(\frac12\abs{X_1(m)-X_2(m)}\right)^2\dee m \\
      &\le 4\Phi(1) \int_{\Omega_-}\Phi\left(\frac12\abs{X_1(m)-X_2(m)}\right)\dee m + 4\int_{\Omega_+}\left(\Phi(1)\frac12\abs{X_1(m)-X_2(m)}\right)^2\dee m \\
      &\le 4\Phi(1) \Phi\left(\frac12\norm{X_1-X_2}_{\Lp{1}(\Omega)}\right) + \left(\Phi(1)\norm{X_1-X_2}_{\Lp{2}(\Omega)}\right)^2.
    \end{align*}
    Since \(\mathfrak{m}(\Omega) = 1\), we have \(\norm{X}_{\Lp{1}(\Omega)}\le\norm{X}_{\Lp{2}(\Omega)}\), and the result follows from collecting all the bounds,
    \begin{align*}
      \norm{(\Phi\ast\rho_1)\circ X_1 - (\Phi\ast\rho_2)\circ X_2}_{\Lp{2}(\Omega)}^2 &\le 8\Phi\left(\max\left\{1,\frac12\norm{X_1-X_2}_{\Lp{2}(\Omega)}\right\}\right)\Phi\left(\frac12\norm{X_1-X_2}_{\Lp{2}(\Omega)}\right) \\
      &\quad+ \left(\Phi(1)\norm{X_1-X_2}_{\Lp{2}(\Omega)}\right)^2.
    \end{align*}
    That we in the end obtain a modulus of continuity is then a consequence of \(\Phi\) being such a function, cf.\ Lemma \ref{lem:Phi:props} (b).
  \end{proof}
  
  \subsection{The associated functional and gradient flow structure}
  Next we will see how \eqref{eq:EA:DI} can be regarded as a gradient flow for a certain convex and lower semi-continuous functional.
  Based on the gradient flow structure in the particle case, cf.\ \eqref{eq:pot:part}, we are led to consider the following functional
  \begin{equation}\label{eq:pot}
    \Vcal(X) = \Wcal_{\Phi}(X) - \ip{\bar{\Psi}}{X} \coloneqq \frac12 \iint_{\Omega\times\Omega} W_{\Phi}(X(m)-X(\omega)) \dee\omega \dee m -\int_\Omega \bar{\Psi}(m)X(m)\dee m,
  \end{equation}
  where \(W_{\Phi}\) is the primitive of the odd function \(\Phi\) from \eqref{eq:Phi} satisfying \(W_{\Phi}(0) = 0\), i.e., the even function
  \begin{equation}\label{eq:PhiPrim}
    W_{\Phi}(x) \coloneqq \int_0^x \Phi(y)\dee y
  \end{equation}
  illustrated in Figure \ref{fig:kernels}.
  This function is also convex on the whole of \(\R\) due to \(\Phi(x)\) being nondecreasing, and its second derivative is the nonnegative communication protocol \(\phi\).
  We observe that \(\Wcal_{\Phi}\) is a proper functional for \(X \in \Lp{2}(\Omega)\), as it can be bounded with the pointwise linear bound \eqref{eq:Phi:linbnd}.
  The second term of \(\Vcal\) is linear in \(X\), and hence bounded by Cauchy--Schwarz and \(\bar{\Psi} \in \Lp{2}(\Omega)\).
  We conclude that \(\Vcal\) is a proper functional with \(\dom(\Vcal) = \Lp{2}(\Omega)\).
  One can check that the G\^{a}teaux derivative \(\nabla_X \Vcal(X)\) of \(\Vcal\) is exactly the negative of the right-hand side in \eqref{eq:EA:DI}, that is, \(-U[X]\) from \eqref{eq:U}, owing to the oddness of \(\Phi\).
  That is,
  \begin{equation*}
    \nabla_X \Wcal_{\Phi}(X) = \Phi\ast\rho(X), \qquad \nabla_{X}\Vcal(X) = \Phi\ast\rho(X) - \bar{\Psi} = -U[X].
  \end{equation*}
  We would like to show that \(\Vcal(X)\) is convex in \(X \in \Kscr\).
  To this end, for \(Y_0, Y_1 \in \Kscr\) and \(\vartheta \in (0,1)\) we introduce the convex combination \(Y_{\vartheta} \coloneqq (1-\vartheta)Y_0 + \vartheta Y_1\).
  Our goal is then to show
  \begin{equation*}
    \Vcal(Y_\vartheta) \le (1-\vartheta)\Vcal(Y_0) + \vartheta \Vcal(Y_1).
  \end{equation*}
  The second term of \eqref{eq:pot} is already linear in \(X\), and so we turn to the first term.
  Observe next that we can write the argument of the inner integrand as
  \begin{equation*}
    Y_\vartheta(m) - Y_\vartheta(\omega) = (1-\vartheta)[Y_0(m)-Y_0(\omega)] + \vartheta [Y_1(m)-Y_1(\omega)].
  \end{equation*}
  Then it follows directly from the convexity of \(W_{\Phi}\) in \eqref{eq:PhiPrim} that also the first term of \eqref{eq:pot} is convex in \(X \in \Kscr\), and we conclude that the entire functional \(\Vcal(X)\) is convex in \(X \in \Kscr\).
  
  Now, since \(\Vcal(X)\) is convex for \(X \in \Kscr\) and has G\^{a}teaux derivative \(\nabla_X\Vcal(X) = -U[X]\), it follows from \cite[Proposition 3.20]{peypouquet2015convex} that its subdifferential then reduces to a single element.
  \begin{prp}
    For \(X \in \Kscr\), the subdifferential \(\del \Vcal(X)\) of \(\Vcal\) defined in \eqref{eq:pot} consists of a single element.
    In particular, we have \(\del \Vcal(X) = \{-U[X]\} \) for \(U[X]\) defined in \eqref{eq:U}.
  \end{prp}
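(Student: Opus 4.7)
The statement is essentially a direct application of a standard result in convex analysis (cited as \cite[Proposition 3.20]{peypouquet2015convex}): for a proper convex functional on a Hilbert space, if it is G\^{a}teaux differentiable at a point, then its subdifferential at that point is the singleton consisting of the G\^{a}teaux gradient. Since convexity of \(\Vcal\) on \(\Kscr\) has been verified just above the statement, and the form of \(\nabla_X \Vcal(X)\) has been announced, the task is to make the G\^{a}teaux differentiability rigorous and then invoke the cited proposition.

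First, I would fix \(X \in \Kscr\) and a test direction \(Y \in \Lp{2}(\Omega)\), and expand
\begin{equation*}
  \Vcal(X+\epsv Y) - \Vcal(X) = \tfrac{1}{2}\iint_{\Omega\times\Omega} \left[W_{\Phi}\left(X(m)-X(\omega)+\epsv(Y(m)-Y(\omega))\right) - W_{\Phi}\left(X(m)-X(\omega)\right)\right]\dee\omega\dee m - \epsv \ip{\bar{\Psi}}{Y}.
\end{equation*}
Dividing by \(\epsv\) and passing \(\epsv \to 0\), the integrand converges pointwise to \(\Phi(X(m)-X(\omega))(Y(m)-Y(\omega))\). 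The key step is to justify the interchange of limit and integral by a dominated convergence argument: since \(W_{\Phi}\) is convex with derivative \(\Phi\), the mean value theorem gives a dominating function of the form \(|\Phi(\xi)|\cdot|Y(m)-Y(\omega)|\) where \(\xi\) lies between \(X(m)-X(\omega)\) and \(X(m)-X(\omega)+\epsv(Y(m)-Y(\omega))\). Using the linear bound \eqref{eq:Phi:linbnd}, this dominating function is controlled by \(\Phi(1)(1+|X(m)-X(\omega)|+|Y(m)-Y(\omega)|)|Y(m)-Y(\omega)|\), which is integrable on \(\Omega\times\Omega\) since \(X,Y \in \Lp{2}(\Omega)\) and \(\mathfrak{m}(\Omega)=1\).

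Next, I would use the oddness of \(\Phi\) to symmetrize: splitting the limit into the two terms coming from \(Y(m)\) and \(-Y(\omega)\), then swapping \(m \leftrightarrow \omega\) in the second integral and applying \(\Phi(-r) = -\Phi(r)\), yields
\begin{equation*}
  \lim_{\epsv\to 0}\frac{\Vcal(X+\epsv Y) - \Vcal(X)}{\epsv} = \int_{\Omega}\!Y(m)\!\int_{\Omega}\!\Phi(X(m)-X(\omega))\dee\omega\dee m - \ip{\bar{\Psi}}{Y} = -\ip{U[X]}{Y},
\end{equation*}
by the definition of \(U[X]\) in \eqref{eq:U}. Since the linear functional \(Y \mapsto -\ip{U[X]}{Y}\) is continuous on \(\Lp{2}(\Omega)\) (because \(U[X] \in \Lp{2}(\Omega)\) by \eqref{eq:Phi:linbnd} and \(\bar{\Psi}\in\Lp{2}(\Omega)\)), this shows \(\Vcal\) is G\^{a}teaux differentiable at \(X\) with gradient \(-U[X]\).

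Finally, combining this G\^{a}teaux differentiability with the convexity of \(\Vcal\) established above, \cite[Proposition 3.20]{peypouquet2015convex} immediately gives \(\del\Vcal(X) = \{-U[X]\}\). I expect the main (but still routine) obstacle to be writing out the dominated convergence argument carefully, since \(\Phi\) may be unbounded at infinity and the integration is over the product space \(\Omega\times\Omega\); the rest amounts to a bookkeeping calculation exploiting the oddness of \(\Phi\) and the symmetry of the double integral.
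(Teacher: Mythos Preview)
Your proposal is correct and takes essentially the same approach as the paper: the paper states (without a formal proof environment) that the result follows from the convexity of \(\Vcal\) combined with its G\^{a}teaux differentiability, citing \cite[Proposition 3.20]{peypouquet2015convex}. You simply fill in the details of the G\^{a}teaux derivative computation that the paper leaves to the reader with ``One can check\ldots'', and your dominated convergence argument is sound.
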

  
  In order to ensure that our flow remains in the cone \(\Kscr\), we have to consider a modified version of the functional \(\Vcal\), namely
  \begin{equation}\label{eq:potK}
    \bar{\Vcal}(X) \coloneqq \Vcal(X) + I_{\Kscr}(X),
  \end{equation}
  where we recall the indicator function \(I_{\Kscr}\) from \eqref{eq:indicator}.
  We would like to show that this is convex for \(X \in \Lp{2}(\Omega)\); that is, considering again the convex combination \(Y_\vartheta\) from before, now with \(Y_0, Y_1 \in \Lp{2}(\Omega)\), we want to show
  \begin{equation}\label{eq:convexity}
    \bar{\Vcal}(Y_\vartheta) \le (1-\vartheta)\bar{\Vcal}(Y_0) + \vartheta \bar{\Vcal}(Y_1).
  \end{equation}
  Suppose either of \(Y_0\), \(Y_1\) does not lie in the cone \(\Kscr\); then according to \eqref{eq:indicator} and \eqref{eq:potK}, the right-hand side of \eqref{eq:convexity} is infinite, and the inequality is trivially satisfied.
  It remains to consider the case when both \(Y_0\) and \(Y_1\) belong to \(\Kscr\).
  In this case the indicator function vanishes due to the convexity of \(\Kscr\), such that \(\bar{\Vcal}\) reduces to \(\Vcal\), which is the case we treated before.
  Moreover, since \(\Vcal\) is continuous and \(\Kscr\) is closed, the functional \(\bar{\Vcal}\) is lower semi-continuous.
  We have therefore proved the following result, which will be useful for establishing existence and uniqueness of solutions to \eqref{eq:EA:DI}.
  \begin{prp}
    The functional \(\bar{\Vcal}\) in \eqref{eq:potK} is convex and lower semi-continuous in \(\Lp{2}(\Omega)\).
  \end{prp}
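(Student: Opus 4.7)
The plan is to prove the two assertions separately, exploiting the decomposition $\bar{\Vcal} = \Vcal + I_{\Kscr}$. Both reduce to familiar one-dimensional convexity/continuity facts once the right case split is in place.

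\textbf{Convexity.} I would fix $Y_0, Y_1 \in \Lp{2}(\Omega)$ and $\vartheta \in (0,1)$, set $Y_\vartheta = (1-\vartheta)Y_0 + \vartheta Y_1$, and split into two cases. If either $Y_0$ or $Y_1$ lies outside $\Kscr$, then $I_\Kscr$ makes the right-hand side of the convexity inequality $+\infty$, so it is trivially valid; hence we may assume $Y_0, Y_1 \in \Kscr$. Since $\Kscr$ is a convex cone, also $Y_\vartheta \in \Kscr$, so all three indicator contributions vanish and the problem collapses to convexity of $\Vcal$ on $\Kscr$. The linear term $-\ip{\bar{\Psi}}{\cdot}$ is trivially convex, so only $\Wcal_\Phi$ needs attention. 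There I would use the identity $Y_\vartheta(m) - Y_\vartheta(\omega) = (1-\vartheta)(Y_0(m)-Y_0(\omega)) + \vartheta (Y_1(m)-Y_1(\omega))$ pointwise in $(m,\omega)$ and invoke the convexity of $W_\Phi$, which follows because $W_\Phi' = \Phi$ is nondecreasing by Lemma \ref{lem:Phi:props}(a). Integrating Jensen's inequality against $\dee\omega\,\dee m$ over $\Omega\times\Omega$ delivers the desired inequality.

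\textbf{Lower semi-continuity.} The strategy is to show that $\Vcal$ is in fact \emph{continuous} on all of $\Lp{2}(\Omega)$ and combine this with lower semi-continuity of $I_\Kscr$, which is immediate from $\Kscr$ being closed in $\Lp{2}(\Omega)$; the sum of a continuous and a lower semi-continuous functional is lower semi-continuous. Continuity of the linear term is straightforward by Cauchy--Schwarz, using $\bar{\Psi} \in \Lp{2}(\Omega)$. For $\Wcal_\Phi$, along a sequence $X_n \to X$ in $\Lp{2}(\Omega)$, the pointwise linear bound \eqref{eq:Phi:linbnd} yields $\abs{W_\Phi(r)} \le \Phi(1)(\abs{r} + \tfrac{1}{2}r^2)$, so $\abs{W_\Phi(X_n(m)-X_n(\omega))}$ is uniformly integrable on $\Omega\times\Omega$ along the sequence; together with a.e.\ convergence on a subsequence, Vitali's theorem gives $\Wcal_\Phi(X_n) \to \Wcal_\Phi(X)$, and a standard subsequence argument upgrades this to convergence along the full sequence.

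\textbf{Main obstacle.} The only non-routine step is handling the at-most-quadratic growth of $W_\Phi$ in the continuity argument; the linear bound on $\Phi$ in Lemma \ref{lem:Phi:props}(c) is exactly the right tool for the domination, and Lemma \ref{lem:PhiConv:unicont} provides an alternative, quantitative substitute via a modulus of continuity if one prefers to avoid Vitali. All remaining ingredients (convexity of $W_\Phi$, and convexity and closedness of $\Kscr$) are either immediate or already recorded in the paper.
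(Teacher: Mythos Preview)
Your proof is correct and follows essentially the same approach as the paper: the same case split for convexity (reducing to convexity of $\Vcal$ on $\Kscr$ via the pointwise identity for $Y_\vartheta(m)-Y_\vartheta(\omega)$ and convexity of $W_\Phi$), and the same decomposition $\bar{\Vcal}=\Vcal+I_\Kscr$ with ``$\Vcal$ continuous $+$ $\Kscr$ closed'' for lower semi-continuity. The only difference is that you supply a detailed Vitali-based justification of the continuity of $\Wcal_\Phi$, which the paper simply asserts.
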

  Like in the proof of \cite[Theorem 2.13]{bonaschi2015equivalence}, we will make use of the fact that the subdifferential \(\del \bar{\Vcal} = \del(\Vcal + I_{\Kscr})\) is additive in this case.
  Indeed, since \(\Vcal\) is proper, lower semi-continuous and convex, as well as bounded on the whole domain of \(\dom(I_{\Kscr}) = \Kscr\), this follows from the Moreau--Rockafellar theorem \cite[Theorem 3.30]{peypouquet2015convex}, see also the the proof of \cite[Proposition 3.61]{peypouquet2015convex}.
  \begin{prp}
    For \(X \in \Kscr\), the subdifferential \(\del \bar{\Vcal}(X)\) of \(\bar{\Vcal}\) in \eqref{eq:potK} can be written as
    \begin{equation*}
      \del \bar{\Vcal}(X) = \del I_{\Kscr}(X) - U[X] = \del I_{\Kscr}(X) - \bar{\Psi} + \int_{\Omega} \Phi(X-X(\omega))\dee\omega.
    \end{equation*}
  \end{prp}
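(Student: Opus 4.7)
The plan is to show the subdifferential sum rule $\partial \bar{\Vcal}(X) = \partial \Vcal(X) + \partial I_{\Kscr}(X)$ and then invoke the earlier singleton identification of $\partial \Vcal(X)$.

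First, I would verify the hypotheses of the Moreau--Rockafellar theorem \cite[Theorem 3.30]{peypouquet2015convex} that was flagged in the paragraph preceding the statement. The functional $\Vcal$ is proper, convex and continuous on all of $\Lp{2}(\Omega)$: properness and convexity on $\Kscr$ were established in the previous propositions, while the global convexity of $\Vcal$ on $\Lp{2}(\Omega)$ is not needed --- what is needed is that $\Vcal$ is continuous at some point of $\dom(I_\Kscr) \cap \dom(\Vcal) = \Kscr$. Continuity of $\Vcal$ on $\Lp{2}(\Omega)$ follows from the linear bound \eqref{eq:Phi:linbnd} on $\Phi$, which makes $\Wcal_\Phi$ continuous in the $\Lp{2}$-norm, together with Cauchy--Schwarz for the linear term $\langle \bar\Psi, X\rangle$. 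Since the qualification condition of Moreau--Rockafellar is satisfied, we conclude that $\partial \bar{\Vcal}(X) = \partial \Vcal(X) + \partial I_{\Kscr}(X)$ for every $X \in \Lp{2}(\Omega)$.

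Next, for $X \in \Kscr$ the preceding proposition already identifies $\partial \Vcal(X) = \{-U[X]\}$ as a singleton, since $\Vcal$ is convex on $\Kscr$ and admits the G\^ateaux derivative $\nabla_X \Vcal(X) = -U[X]$. Substituting this into the sum rule gives
\begin{equation*}
\partial \bar{\Vcal}(X) = \partial I_{\Kscr}(X) + \{-U[X]\} = \partial I_\Kscr(X) - U[X].
\end{equation*}
Finally, expanding $U[X]$ via its definition \eqref{eq:U} yields the second equality in the statement.

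The only mildly delicate point is the qualification condition for Moreau--Rockafellar; the rest is essentially bookkeeping. Since $\Vcal$ has full domain $\Lp{2}(\Omega)$ and is continuous there, any form of the theorem (interior condition, continuity condition, or the relative-interior version) applies immediately, so no real obstacle arises. I would therefore present the proof as a short two-step argument: apply Moreau--Rockafellar to split the subdifferential, then plug in the previously computed singleton $\{-U[X]\}$.
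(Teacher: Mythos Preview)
Your approach is essentially identical to the paper's, which also invokes the Moreau--Rockafellar theorem to obtain additivity of the subdifferential and then substitutes the singleton $\partial\Vcal(X)=\{-U[X]\}$ established in the preceding proposition. One minor correction: the standard Moreau--Rockafellar theorem does require $\Vcal$ to be convex on all of $\Lp{2}(\Omega)$, not merely on $\Kscr$ as you suggest; fortunately this global convexity holds (since $W_\Phi$ is convex on all of $\R$, being the primitive of the nondecreasing $\Phi$), so your argument goes through unchanged.
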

  Now we will provide the definition of an \(\Lp{2}\)-gradient flow solution associated with the functional \eqref{eq:potK}, cf. \cite[Definition 2.11]{bonaschi2015equivalence}.
  This will justify the differential inclusion \eqref{eq:EA:DI} for the Euler-alignment system.
  
  \begin{dfn}[\(\Lp{2}\)-gradient flow]\label{dfn:gradflow}
    An absolutely continuous curve \(X_t \colon [0,\infty) \to \Lp{2}(\Omega) \) is an \(\Lp{2}\)-gradient flow for the functional \(\bar{\Vcal}\) in \eqref{eq:potK} if it satisfies the differential inclusion
    \begin{equation}\label{eq:gradflow}
      -\dot{X}_t \in \del \bar{\Vcal}(X_t) = -U[X_t] + \del I_{\Kscr}(X_t) \quad \text{for a.e.} \ t > 0.
    \end{equation}
  \end{dfn}
  Observe that \eqref{eq:gradflow} is exactly \eqref{eq:EA:DI}, which was derived heuristically from the particle case in Section \ref{s:particles}.
  
  \subsection{Lagrangian solutions}
  Since \(\Lp{2}(\Omega)\) is a Hilbert space, we will, like the works \cite{natile2009wasserstein,brenier2013sticky,bonaschi2015equivalence}, rely on the theory of \cite{brezis1973operateurs} to establish existence and uniqueness of a solution \(X_t\) evolving according to the minimal element of the subdifferential \(\del\bar{\Vcal}(X_t)\).
  This element is sometimes called the principal section, and we denote it by \(\del^\circ \bar{\Vcal}(X_t)\).
  The subdifferential \(\del\bar{\Vcal}\) is a maximally monotone operator, cf.\ \cite[Exemple 2.3.4]{brezis1973operateurs}, and we can apply \cite[Theoreme 3.1]{brezis1973operateurs} to deduce existence and uniqueness of such solutions to \eqref{eq:gradflow}.
  In fact, since \(\bar{\Vcal}\) is proper, lower semi-continuous and convex we can directly apply \cite[Theoreme 3.2]{brezis1973operateurs} to obtain the following result.
  \begin{thm}[Lagrangian solution]\label{thm:LagSol}
    Let \(\bar{X} \in \Kscr\) and \(\bar{V} \in \Lp{2}(\Omega)\), such that \(\bar{\Psi} = \Psi[\bar{X},\bar{V}]\) given by \eqref{eq:PsiXV}. Then there exists a unique gradient flow solution \(X_t \colon [0,\infty) \to \Kscr\) in the sense of Definition \ref{dfn:gradflow} with initial data \(\bar{X}\) which evolves according to the minimal selection of \(\del \bar{\Vcal}\), that is,
    \begin{equation*}
      -\dot{X}_t \in \del^\circ \bar{\Vcal}(X_t) \quad \text{for a.e.} \ t > 0.
    \end{equation*}
    We will call this solution the Lagrangian solution associated with the Euler-alignment system.
  \end{thm}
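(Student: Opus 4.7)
The plan is to invoke the existence-uniqueness theory for gradient flows of convex functionals on Hilbert spaces directly, following the route sketched right before the theorem statement. All preparatory work has been done in the preceding propositions: $\bar{\Vcal}$ is proper, lower semi-continuous, and convex on the Hilbert space $\Lp{2}(\Omega)$, so its subdifferential is maximally monotone. Moreover, $\bar{X}$ lies in the effective domain of $\bar{\Vcal}$: indeed, $\bar{X} \in \Kscr$ gives $I_{\Kscr}(\bar{X}) = 0$, while $\Vcal$ is everywhere finite on $\Lp{2}(\Omega)$ thanks to the linear bound \eqref{eq:Phi:linbnd} on $\Phi$ combined with $\bar{\Psi} \in \Lp{2}(\Omega)$.

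With these hypotheses in place, I would apply Brezis's Theoreme 3.2 to produce a unique absolutely continuous curve $X_t \colon [0,\infty) \to \Lp{2}(\Omega)$ with $X_0 = \bar{X}$ satisfying
\begin{equation*}
  -\dot{X}_t \in \del^{\circ}\bar{\Vcal}(X_t) \quad \text{for a.e. } t > 0,
\end{equation*}
where $\del^{\circ}\bar{\Vcal}$ is the principal section guaranteed by the theorem.

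It then remains only to verify that $X_t$ stays in $\Kscr$ for all $t \ge 0$. This is automatic from the standard energy-decrease property of gradient flows: $t \mapsto \bar{\Vcal}(X_t)$ is non-increasing along the flow, so $\bar{\Vcal}(X_t) \le \bar{\Vcal}(\bar{X}) < \infty$, and since $\Vcal$ is everywhere finite this forces $I_{\Kscr}(X_t) < \infty$, i.e., $X_t \in \Kscr$. No further analytic work is required at this stage; the main obstacle has in fact already been disposed of in the preceding propositions, namely showing that the alignment energy $\Wcal_{\Phi}$ is convex on $\Kscr$ (which relied on $W_{\Phi}$ being convex on $\R$) and then gluing this to $I_{\Kscr}$ via Moreau--Rockafellar additivity. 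Once those two ingredients are secured, Theorem \ref{thm:LagSol} reduces to a black-box application of Brezis's abstract result.
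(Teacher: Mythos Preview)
Your proposal is correct and matches the paper's approach exactly: the paper simply states that, since \(\bar{\Vcal}\) has been shown to be proper, lower semi-continuous and convex, Theorem \ref{thm:LagSol} follows directly from \cite[Th\'eor\`eme 3.2]{brezis1973operateurs}, with no further argument given. Your additional remarks on why \(\bar{X}\in\dom(\bar{\Vcal})\) and why \(X_t\) remains in \(\Kscr\) are correct elaborations; note also that \(X_t\in\Kscr\) is already implicit in Br\'ezis's conclusion \(X_t\in\dom(\del\bar{\Vcal})\subset\dom(\bar{\Vcal})=\Kscr\) for \(t>0\).
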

  
  From the same theory, the Lagrangian solution in Theorem \ref{thm:LagSol} enjoys many useful properties, and following \cite[Theorem 3.5]{brenier2013sticky}, we list some of them here.
  \begin{cor}[Properties of the Lagrangian solution]\label{cor:LagSol:props}
    Let \(X_t\) be the Lagrangian solution from Theorem \ref{thm:LagSol}, with corresponding prescribed velocity \(U[X_t]\) as defined in \eqref{eq:U}. Then it follows:
    \begin{enumerate}[label={\upshape(\alph*)}]
      \item The right-derivative \(\diff{^+}{t}X_t \eqqcolon V_t\) exists for all \(t \ge 0\).
      \item The velocity \(V_t\) is the minimal element of the subdifferential, that is,
      \begin{equation}\label{eq:V:minimal}
        V_t =  \left( U[X_t] -\del I_{\Kscr}(X_t) \right)^\circ.
      \end{equation}
      In particular, \eqref{eq:EA:DI} holds true for all \(t \ge 0\) if we replace \(\dot{X}_t\) with \(V_t\).
      \item The velocity \(V_t\) is the projection of \(U[X_t]\) on the tangent cone \(T_{X_t}\Kscr\):
      \begin{equation}\label{eq:V:PTXU}
        V_t = \Proj{T_{X_t}\Kscr} U[X_t], \qquad \Psi_t \coloneqq V_t + \Phi\ast\rho_t(X_t) = \Proj{T_{X_t}\Kscr} \bar{\Psi} \quad \text{for all } t \ge 0.
      \end{equation}
      \item Continuity of the velocity: \(t \mapsto V_t\) is right-continuous for all \(t \ge 0\), in particular
      \begin{equation}\label{eq:V:rightcont}
        \lim\limits_{t\to0+}V_t = \bar{V} \quad \text{if and only if} \quad \bar{V} \in T_{\bar{X}}\Kscr.
      \end{equation}
      Moreover, the map \(t \mapsto \norm{V_t}_{\Lp{2}(\Omega)}\) is nonincreasing, and so there is an at most countable set of times \(\Tcal \subset (0,\infty)\) where it is discontinuous.
      Then \(t \mapsto X_t\) is continuously differentiable in \((0,\infty)\setminus\Tcal\). Defining \(\rho_t \coloneqq X_{t \#}\mathfrak{m} \in \Pscr_2(\R)\), there exists a unique map \(v_t \in \Lp{2}(\R,\rho_t)\) such that
      \begin{equation}\label{eq:V:comp}
        \dot{X}_t = V_t = \Proj{\Hscr_{X_t}}U[X_t] = v_t \circ X_t \in \Hscr_{X_t} \ \text{for every } t \in (0,\infty)\setminus\Tcal.
      \end{equation}
      \item Dissipation identity:
      \begin{equation}\label{eq:diss}
        \diff{^+}{t} \Vcal(X_t) = -\norm{V_t}_{\Lp{2}(\Omega)}^2 \quad \text{for \(t > 0\), implying} \quad \Vcal(X_s) - \Vcal(X_t) = \int_s^t \norm{V_r}_{\Lp{2}(\Omega)}^2\dee r
      \end{equation}
      \item Stability estimates: Let \((X^i_t, V_t^i)\) be Lagrangian solutions of \eqref{eq:EA:DI} with respective initial data \((\bar{X}^{i},\bar{V}^{i})\), such that \(\bar{\Psi}^{i} = \Psi[\bar{X}^{i},\bar{V}^{i}]\). Then we have
      \begin{equation}\label{eq:X:stab}
        \norm{X_t^1-X_t^2}_{\Lp{2}(\Omega)} \le \norm{X_s^1-X_s^2}_{\Lp{2}(\Omega)} + (t-s)\norm{\bar{\Psi}^1-\bar{\Psi}^2}_{\Lp{2}(\Omega)}
      \end{equation}
      for \(s \le t\), as well as
      \begin{equation}\label{eq:V:stab}
        \int_0^T \norm{V_t^1-V_t^2}^2_{\Lp{2}(\Omega)}\dee t \le C\left(\sum_{i=1}^{2}(\norm{\bar{X}^{i}} + \norm{\bar{V}^{i}} + \sqrt{T} \norm{\bar{\Psi}^{i}})\right) \left( \norm{\bar{X}^1-\bar{X}^2}_{\Lp{2}(\Omega)} + \sqrt{T}\norm{\bar{\Psi}^1 - \bar{\Psi}^2}_{\Lp{2}(\Omega)} \right),
      \end{equation}
      for some constant \(C > 0\) independent of \(T\) and the initial data. 
    \end{enumerate}
  \end{cor}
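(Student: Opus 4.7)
The overall plan is to appeal directly to the abstract theory of Brézis \cite{brezis1973operateurs} for gradient flows governed by maximally monotone operators, now specialized to the subdifferential $\del\bar{\Vcal}$ of the proper, convex and lower semi-continuous functional $\bar{\Vcal}$ produced by the preceding proposition. The standard package (\cite[Theoremes 3.1--3.2]{brezis1973operateurs}) immediately yields: existence of the right-derivative $V_t := \diff{^+}{t}X_t$ for every $t \ge 0$; the identification $-V_t = \del^\circ\bar{\Vcal}(X_t)$; right-continuity of $t\mapsto V_t$; monotonicity of $t\mapsto \norm{V_t}_{\Lp{2}(\Omega)}$ with at most countably many jumps, yielding the exceptional set $\Tcal$; and the dissipation identity \eqref{eq:diss}. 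Combined with the additive splitting $\del\bar{\Vcal}(X_t)=-U[X_t]+N_{X_t}\Kscr$ established above, this delivers (a), (b), (e), and the first two statements of (d) at once.

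For (c), the idea is to use Moreau's decomposition: since $T_{X_t}\Kscr$ and $N_{X_t}\Kscr$ are mutually polar closed convex cones in $\Lp{2}(\Omega)$, every element splits orthogonally into its projections onto them. The norm-minimal element of the affine translate $U[X_t]-N_{X_t}\Kscr$ is obtained by subtracting the normal-cone projection, giving $V_t=\Proj{T_{X_t}\Kscr}U[X_t]$. The second identity $\Psi_t=\Proj{T_{X_t}\Kscr}\bar{\Psi}$ then follows because $\Phi$ is nondecreasing and $X_t\in\Kscr$, so $m\mapsto \Phi\ast\rho_t(X_t(m))$ is constant on every maximal flat interval of $X_t$, i.e.\ $\Phi\ast\rho_t(X_t)\in\Hscr_{X_t}$; since $\Hscr_{X_t}\perp N_{X_t}\Kscr$, the normal-cone projections of $U[X_t]$ and $\bar{\Psi}$ coincide, which is exactly what is needed.

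For the finer decomposition in (d), namely $V_t=\Proj{\Hscr_{X_t}}U[X_t]=v_t\circ X_t$ off $\Tcal$, I would argue as follows. By (b), $V_t$ is the norm-minimum on $U[X_t]-N_{X_t}\Kscr$; using \eqref{eq:HX&TX&NX}, any element of $T_{X_t}\Kscr$ can be reduced, modulo $N_{X_t}\Kscr$, to its $\Hscr_{X_t}$-projection without increasing the $\Lp{2}(\Omega)$-norm, so at times where left- and right-derivatives coincide one must in fact have $V_t\in\Hscr_{X_t}$ and hence $V_t=\Proj{\Hscr_{X_t}}U[X_t]$. The Eulerian representation $V_t=v_t\circ X_t$ for a unique $v_t\in\Lp{2}(\R,\rho_t)$ is then standard: a function constant on every flat interval of $X_t$ descends through the push-forward $X_{t\#}\mathfrak{m}=\rho_t$, and the resulting $v_t$ is well-defined $\rho_t$-a.e.\ and square-integrable by the push-forward identity \eqref{eq:push}.

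Finally, (f) splits into an easy and a hard part. The position estimate \eqref{eq:X:stab} is proved exactly as in the particle case: differentiating $\tfrac{1}{2}\norm{X_t^1-X_t^2}_{\Lp{2}(\Omega)}^2$, the monotonicity of $X\mapsto\int_\Omega \Phi(X-X(\omega))\dee\omega$ (from Lemma \ref{lem:Phi:props}) and of the normal cones both produce nonnegative contributions to be discarded, leaving only the linear term $\ip{X_t^1-X_t^2}{\bar{\Psi}^1-\bar{\Psi}^2}$, which is bounded by Cauchy--Schwarz and integrated in time. The velocity estimate \eqref{eq:V:stab} is the main technical obstacle, because it requires control beyond the abstract $\Lp{2}$-bound on $V_t$ coming from the dissipation. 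My plan is to mimic the Brézis--Savaré refinement (as used in \cite{savare1996weak,natile2009wasserstein,brenier2013sticky}): combine the uniform-in-$t$ bound on $\norm{V_t}_{\Lp{2}(\Omega)}$ from \eqref{eq:diss} with Lemma \ref{lem:PhiConv:unicont}, which controls $\norm{U[X_t^1]-U[X_t^2]}_{\Lp{2}(\Omega)}$ by a modulus of $W_2(\rho_t^1,\rho_t^2)\le\norm{X_t^1-X_t^2}_{\Lp{2}(\Omega)}$, then test the inclusions against $X_t^1-X_t^2$ and integrate by parts in time to trade a derivative of $V_t^i$ against a derivative of $X_t^1-X_t^2$, closing the estimate with \eqref{eq:X:stab}.
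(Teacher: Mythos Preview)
Your treatment of (a)--(e) is essentially identical to the paper's: both invoke \cite[Theoremes 3.1--3.2]{brezis1973operateurs} for (a), (b), (e) and the right-continuity and norm-monotonicity in (d), and both argue (c) from the polar-cone decomposition together with the observation that $\Phi\ast\rho_t(X_t)\in\Hscr_{X_t}$. Your use of Moreau's decomposition for (c) is slightly more direct than the paper's hands-on orthogonality computation, but the content is the same.

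The genuine divergence is in (f), specifically the velocity estimate \eqref{eq:V:stab}. The paper does not attempt your ad hoc strategy of controlling $U[X_t^1]-U[X_t^2]$ via Lemma~\ref{lem:PhiConv:unicont} and then integrating by parts in time. Instead it makes one clean structural move: rewrite the inclusion as
\[
  \dot{X}_t + \del\bar{\Wcal}_{\Phi}(X_t) \ni \bar{\Psi}, \qquad \bar{\Wcal}_{\Phi}(X) \coloneqq \Wcal_{\Phi}(X) + I_{\Kscr}(X),
\]
so that the two solutions $X_t^1, X_t^2$ are governed by the \emph{same} convex, lower semi-continuous functional $\bar{\Wcal}_{\Phi}$, differing only in the time-constant forcing terms $\bar{\Psi}^1, \bar{\Psi}^2$. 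This puts the problem squarely inside the abstract framework of \cite[Lemme 3.1]{brezis1973operateurs} for \eqref{eq:X:stab} and \cite[Theorem 2]{savare1996weak} for \eqref{eq:V:stab}, with the Hilbert triple $V=H=V'=\Lp{2}(\Omega)$ and $\norm{f}_{H^1(0,T;V')}^2 = T\norm{\bar{\Psi}}^2$; the specific form of the constant in \eqref{eq:V:stab} then drops out directly. Your sketch, by contrast, would have to contend with the fact that the normal-cone elements $\xi_t^i$ need not be differentiable in time, making the ``integrate by parts to trade a derivative of $V_t^i$ against a derivative of $X_t^1-X_t^2$'' step delicate to justify; this is precisely the technical work that Savar\'{e}'s abstract result packages. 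Your route may be salvageable, but the paper's rewriting is both shorter and more robust.
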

  \begin{proof}
    Properties (a), (b) and the right-continuity of \(V_t\) are consequences of \cite[Theoreme 3.1]{brezis1973operateurs}, while (e) follows from \cite[Theoreme 3.2]{brezis1973operateurs}.
    
    By property (b), \(V_t\) is the \(\Lp{2}\)-projection of the zero function onto the closed, convex subset \(U[X_t]-\del I_{\Kscr}(X_t)\), and so by \eqref{eq:proj:C} it follows that \(\ip{V}{U-V-\xi} \ge 0\) for all \(\xi \in \del I_{\Kscr}(X_t)\).
    Furthermore, since \(U[X_t]-V_t\) belongs to the cone \(\del I_{\Kscr}(X_t)\), it follows that both \(0\) and \(2(U[X_t]-V_t)\) belong to  \(\del I_{\Kscr}(X_t)\); hence by the previous inequality \(U[X_t]-V_t \perp V_t\).
    On the other hand, by property (a) and \eqref{eq:TK0} it is clear that \(V_t \in T_{X_t}\Kscr\). Then it follows from the characterization \eqref{eq:proj:C} that \(V_t = \Proj{T_{X_t}\Kscr}U[X_t]\).
    Since \(\Phi\ast\rho_t(X_t) \in \Hscr_{X_t}\), it is clear from \eqref{eq:TK2} that \(\Proj{T_{X_t}\Kscr}(U[X_t]) = \Proj{T_{X_t}\Kscr}\bar{\Psi} - \Phi\ast\rho_t(X_t)\).
    
    The stability estimates (f) follow from instead considering \eqref{eq:gradflow} from the point of view of the proper, convex and lower semi-continuous functional \(\bar{\Wcal}_{\Phi}(X) = \Wcal_{\Phi}(X) + I_{\Kscr}(X)\), and that \eqref{eq:gradflow} is equivalent to \(\dot{X}_t + \del \bar{\Wcal}_{\Phi}(X_t) \ni \bar{\Psi}\).
    Then \eqref{eq:X:stab} follows from \cite[Lemme 3.1]{brezis1973operateurs}.
    On the other hand, \eqref{eq:V:stab} is a consequence of \cite[Theorem 2]{savare1996weak}; in their notation, our setting translates to the Hilbert triple \(V = H = V' = \Lp{2}(\Omega)\) and quantities \(w(u_0,f) = \norm{\bar{X}}_{\Lp{2}(\Omega)} + \norm{\bar{V}}_{\Lp{2}(\Omega)}\) and \(\norm{f}_{H^1(0,T;V')}^2 = T\norm{\bar{\Psi}}^2_{\Lp{2}(\Omega)} \).
  \end{proof}
  
  \begin{rem}[The initial velocity]\label{rem:initvel}
    Corollary \ref{cor:LagSol:props} allows us to have \(\bar{V} \in T_{\bar{X}}\Kscr \setminus \Hscr_{\bar{X}} \), and still have the right-derivative \(V_t \to \bar{V}\) as \(t\to0+\);
    this would then mean that the initial concentrated mass has different initial velocities.
    However, if our aim is to study \eqref{eq:EA} with initial data \(\bar{\rho}\) and \(\bar{v} \in \Lp{2}(\R,\bar{\rho})\), this does not make much sense, as the natural choice of \(\bar{V}\) would be \(\bar{v}\circ\bar{X} \in \Hscr_{\bar{X}}\).
  \end{rem}
  \begin{rem}[Energy dissipation]
    We will not directly make use of the energy dissipation relation \eqref{eq:diss} in this study, but merely point out that energy dissipation plays a role in a recent study of a kinetic Cucker--Smale model \cite[Section 3]{peszek2022measure}.
  \end{rem}
  
  We can think of picking the minimal element in the set \(U[X_t]-\del I_{\Kscr}(X_t)\), cf.\ \eqref{eq:V:minimal}, as finding the velocity \(V_t\) closest to the prescribed velocity \(U[X_t]\) which still allows \(X_t\) to remain in \(\Kscr\).
  Note that in \eqref{eq:V:PTXU} we have, in analogy with the discrete quantity \(\bs{\psi}(t)\) from Section \ref{s:particles}, introduced \(\Psi_t\), which satisfies the corresponding continuum version of the discrete relation \eqref{eq:proj:psi}.
  That is, it is the projection of the natural velocity \(\bar{\Psi}\) on the tangent cone \(T_{X_t}\Kscr\).
  
  Observe that \(\frac{1}{h}(X_{t-h}-X_t) \in T_{X_t}\Kscr \) and is uniformly bounded for any for any \(0 < h < t\), hence it has a subsequence converging weakly to \(-V' \in T_{X_t}\Kscr\) by the definition \eqref{eq:TK1} and the cone being weakly closed.
  Then, by the characterization \eqref{eq:TK2}, any such weak limit must be nonincreasing on connected components of \(\Omega_{X_t}\), which is analogous to the monotonicity relation leading to the barycentric lemma \eqref{eq:barycentric} in the particle case.
  
  If the relation \(V_t \in \Hscr_{X_t}\) in \eqref{eq:V:comp} were to hold for all times, this is one of the hallmarks of the so-called sticky Lagrangian solutions, and we recall their definition \cite[Definition 3.7]{brenier2013sticky} next.
  \begin{dfn}[Sticky Lagrangian solutions]\label{dfn:LagSol:sticky}
    A Lagrangian solution is called \textit{sticky} if
    \begin{equation}\label{eq:sticky:OX}
      \text{for any} \ s \le t \ \text{we have} \ \Omega_{X_{s}} \subset \Omega_{X_{t}}.
    \end{equation}
  \end{dfn}
  Because of the implications \eqref{eq:OX&NX} and \eqref{eq:OX&HX},
  any sticky Lagrangian solution satisfies the monotonicity condition
  \begin{equation}\label{eq:sticky:NX&HX}
    \del I_{\Kscr}(X_s) \subset \del I_{\Kscr}(X_t), \quad \Hscr_{X_t} \subset \Hscr_{X_s} \enspace \text{for any } s \le t.
  \end{equation}
  Another property of sticky solutions which will prove useful below is the following,
  \begin{equation}\label{eq:sticky:PHXs&NXt}
    \xi \in \del I_{\Kscr}(X_t) \implies \Proj{\Hscr_{X_s}}\xi \in \del I_{\Kscr}(X_t).
  \end{equation}
  To prove this we will use the characterization from Lemma \ref{lem:NX}.
  Let \(\Xi\) be the corresponding primitive \eqref{eq:Xi} of \(\xi\), then we know that \(\Xi(m) = 0\) for \(m \in \Omega\setminus\Omega_{X_t}\), and \(\Xi(m) \ge 0\) for \(m \in \Omega_{X_t}\).
  By \eqref{eq:sticky:NX&HX}, \(\Proj{\Hscr_{X_s}}\) will only modify \(\xi\) in \(\Omega_{X_s} \subset \Omega_{X_t}\), so let us consider a maximal interval \((\alpha_s,\beta_s) \subset \Omega_{X_s}\) which is necessarily contained in a maximal interval \((\alpha_t,\beta_t) \subset \Omega_{X_t}\).
  On \((\alpha_s, \beta_s)\), the projection will replace \(\xi\) with its average value over this interval, which means that the corresponding primitive of \(\Proj{\Hscr_{X_s}}\xi\) on this interval will be \(\Xi(\alpha_s) + \frac{m-\alpha_s}{\beta_s-\alpha_s}(\Xi(\beta_s)-\Xi(\alpha_s)) \ge 0\).
  An analogous expression holds on any other maximal interval of \(\Omega_{X_s}\) contained in \((\alpha_t,\beta_t)\), while on the remaining part \((\alpha_t,\beta_t)\setminus\Omega_{X_s}\) its value is \(\Xi(m) \ge 0\).
  Hence, the primitive of \(\Proj{\Hscr_{X_s}}\xi\) also belongs to \(\Nscr_{X_t}\), meaning \eqref{eq:sticky:PHXs&NXt} holds.
  
  Sticky Lagrangian solutions satisfy additional properties, and analogous to \cite[Proposition 3.8]{brenier2013sticky} we showcase some of these below.
  \begin{prp}[Projection formulas for sticky Lagrangian solutions]\label{prp:sticky:projection}
    Let \(X_t \colon [0,\infty) \to \Kscr\) be a sticky Lagrangian solution of \eqref{eq:EA:DI} in the sense of Definition \ref{dfn:LagSol:sticky}.
    Then
    \begin{equation}\label{eq:sticky:HX}
      V_t \in \Hscr_{X_t}, \ \text{and so also} \ \Psi_t \in \Hscr_{X_t}, \ \text{for all} \ t \ge 0.
    \end{equation}
    Furthermore, for \(0 \le s \le t\), \(X_t\) and \(\Psi_t\) satisfy
    \begin{equation}\label{eq:sticky:Psi}
      \Psi_s - \Psi_t \in \del I_{\Kscr}(X_t) \implies \Psi_t = \Proj{\Hscr_{X_t}}\Psi_s,
    \end{equation}
    \begin{equation}\label{eq:sticky:Xs}
      X_t = \Proj{\Kscr} \left( X_s + (t-s)\Psi_s - \int_s^t \Phi\ast\rho_r(X_r) \dee r \right).
    \end{equation}
    In particular, we have the formulas
    \begin{equation}\label{eq:sticky:V}
      V_t = \Proj{\Hscr_{X_t}}U[X_t] = \Proj{\Hscr_{X_t}}\bar{\Psi} - \Phi\ast\rho_t(X_t), \ \text{and so also} \ \Psi_t = \Proj{\Hscr_{X_t}}\bar{\Psi},
    \end{equation}
    \begin{equation}\label{eq:sticky:X}
      X_t = \Proj{\Kscr} \left( \bar{X} + \int_0^t U[X_s]\dee s \right) = \Proj{\Kscr} \left( \bar{X} + t\bar{\Psi} - \int_0^t \Phi\ast\rho_s(X_s) \dee s \right).
    \end{equation}
  \end{prp}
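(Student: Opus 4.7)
The plan is to establish the four claims \eqref{eq:sticky:HX}--\eqref{eq:sticky:Xs} in order, and then deduce the specializations \eqref{eq:sticky:V} and \eqref{eq:sticky:X} at \(s = 0\). For \eqref{eq:sticky:HX}, I would fix any maximal interval \((\alpha,\beta) \subset \Omega_{X_t}\); by the stickiness assumption \eqref{eq:sticky:OX}, \((\alpha,\beta) \subset \Omega_{X_{t'}}\) for every \(t' \ge t\), so both \(X_t\) and \(X_{t'}\) are constant on \((\alpha,\beta)\), and thus so is the difference quotient \((X_{t'}-X_t)/(t'-t)\). Passing to the \(\Lp{2}(\Omega)\)-limit \(t' \to t+\) using Corollary \ref{cor:LagSol:props}(a) yields \(V_t\) constant on \((\alpha,\beta)\), i.e.\ \(V_t \in \Hscr_{X_t}\); since \(\Phi\ast\rho_t\) is continuous on \(\R\) and \(X_t\) is constant on \((\alpha,\beta)\), the composition \(\Phi\ast\rho_t(X_t)\) inherits this property, so \(\Psi_t = V_t + \Phi\ast\rho_t(X_t) \in \Hscr_{X_t}\).

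The next step is the inclusion \(N_{X_t}\Kscr \subset \Hscr_{X_t}^{\perp}\). By Lemma \ref{lem:NX}, any \(W \in N_{X_t}\Kscr\) has primitive vanishing on \([0,1]\setminus\Omega_{X_t}\), which forces \(W = 0\) a.e.\ outside \(\Omega_{X_t}\) and \(\int_\alpha^\beta W\dee m = 0\) on each maximal interval \((\alpha,\beta) \subset \Omega_{X_t}\); splitting \(\int_\Omega W Z\dee m\) interval by interval and pulling out the constant value of any \(Z \in \Hscr_{X_t}\) then gives \(W \perp Z\). The implication \eqref{eq:sticky:Psi} is now immediate: assuming \(\Psi_s - \Psi_t \in N_{X_t}\Kscr\), the identity \(\Psi_s = \Psi_t + (\Psi_s - \Psi_t)\) exhibits the orthogonal decomposition in \(\Lp{2}(\Omega)\) (with \(\Psi_t \in \Hscr_{X_t}\) by the preceding step), whence \(\Proj{\Hscr_{X_t}}\Psi_s = \Psi_t\).

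For \eqref{eq:sticky:Xs}, I would use the projection characterization \eqref{eq:proj:C}, which reduces the identity to showing \(Y - X_t \in N_{X_t}\Kscr\) with \(Y\) the argument of \(\Proj{\Kscr}\). Using \(X_t - X_s = \int_s^t V_r\dee r\) from Corollary \ref{cor:LagSol:props}(d) together with \(V_r = \Psi_r - \Phi\ast\rho_r(X_r)\), a direct computation yields \(Y - X_t = \int_s^t(\Psi_s - \Psi_r)\dee r\); if the pointwise inclusion \(\Psi_s - \Psi_r \in N_{X_r}\Kscr\) can be established for every \(r \in [s,t]\), stickiness and \eqref{eq:OX&NX} lift it to \(N_{X_t}\Kscr\), and Remark \ref{rem:intCone} applied to the closed convex cone \(N_{X_t}\Kscr\) completes the argument. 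I expect this pointwise inclusion to be the main obstacle. Using the primitive criterion of Lemma \ref{lem:NX} together with the identity \(\Psi_r = \Proj{\Hscr_{X_r}}\Psi_s\) obtained by iterating subspace projections along the nested chain \(\Hscr_{X_r} \subset \Hscr_{X_s}\), the vanishing of the primitive of \(\Psi_s - \Psi_r\) outside \(\Omega_{X_r}\) is straightforward, but its nonnegativity on each maximal interval \((\alpha,\beta) \subset \Omega_{X_r}\) amounts to a continuum analogue of the barycentric/Ole\u{\i}nik inequality \eqref{eq:Oleinik:part}, which I would extract from the convex envelope characterization of the tangent cone projection (Proposition \ref{prp:PK}) applied to \(\Psi_r = \Proj{T_{X_r}\Kscr}\bar{\Psi}\). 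Finally, \eqref{eq:sticky:V} and \eqref{eq:sticky:X} are just the specializations of \eqref{eq:sticky:Psi} and \eqref{eq:sticky:Xs} at \(s = 0\): the hypothesis \(\bar{\Psi} - \Psi_t \in N_{X_t}\Kscr\) holds automatically by Corollary \ref{cor:LagSol:props}(c) and \eqref{eq:proj:C}, and any initial discrepancy \(\bar{\Psi} - \Psi_0 \in N_{\bar{X}}\Kscr\) lies in \(N_{X_t}\Kscr\) by stickiness and can therefore be absorbed into \(\Proj{\Kscr}\) without changing its value.
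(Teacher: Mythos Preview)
Your proposal is correct and hits the same ingredients as the paper's proof, but the organization differs in a way worth pointing out. The paper proves \eqref{eq:sticky:V} \emph{immediately} after \eqref{eq:sticky:HX}, not as a final specialization: once you know \(\Psi_t \in \Hscr_{X_t}\) and \(\bar{\Psi}-\Psi_t \in N_{X_t}\Kscr \subset \Hscr_{X_t}^\perp\) (Corollary~\ref{cor:LagSol:props}(c)), the identity \(\Psi_t = \Proj{\Hscr_{X_t}}\bar{\Psi}\) is a one-liner. With this in hand, the key inclusion \(\Psi_s-\Psi_t \in N_{X_t}\Kscr\) falls out of the paper's auxiliary fact \eqref{eq:sticky:PHXs&NXt} (that \(N_{X_t}\Kscr\) is invariant under \(\Proj{\Hscr_{X_s}}\) for \(s\le t\)): apply \(\Proj{\Hscr_{X_s}}\) to \(\bar{\Psi}-\Psi_t \in N_{X_t}\Kscr\), using \(\Proj{\Hscr_{X_s}}\bar{\Psi}=\Psi_s\) and \(\Proj{\Hscr_{X_s}}\Psi_t=\Psi_t\) (since \(\Psi_t\in\Hscr_{X_t}\subset\Hscr_{X_s}\)). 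This is the clean route.

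Your plan instead defers the inclusion to inside the proof of \eqref{eq:sticky:Xs} and verifies it via the primitive criterion of Lemma~\ref{lem:NX} and the convex-envelope description of \(\Proj{T_{X_r}\Kscr}\). That works, but it is effectively a re-derivation of \eqref{eq:sticky:PHXs&NXt} in situ: the nonnegativity of \(\int_\alpha^m(\Psi_s-\Psi_r)\) on a maximal interval \((\alpha,\beta)\subset\Omega_{X_r}\) comes from (i) \(\int_\alpha^m\bar{\Psi}\ge(m-\alpha)\Psi_r\) because \(\Psi_r\in\Hscr_{X_r}\) forces the envelope on \([\alpha,\beta]\) to be affine, and (ii) passing from \(\bar{\Psi}\) to \(\Psi_s=\Proj{\Hscr_{X_s}}\bar{\Psi}\) by linear interpolation on maximal subintervals of \(\Omega_{X_s}\cap(\alpha,\beta)\), which preserves the lower bound against an affine function---exactly the argument used to establish \eqref{eq:sticky:PHXs&NXt}. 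Note also that your ``iterating subspace projections'' step silently uses \(\Psi_r=\Proj{\Hscr_{X_r}}\bar{\Psi}\) and \(\Psi_s=\Proj{\Hscr_{X_s}}\bar{\Psi}\), i.e.\ \eqref{eq:sticky:V}; so in practice you would prove \eqref{eq:sticky:V} early just as the paper does, not at the end.
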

  \begin{proof}
    The inclusion \eqref{eq:sticky:HX} follows from the right-continuity of \(V_t\) and \eqref{eq:V:comp} in Corollary \ref{cor:LagSol:props} together with the monotonicity property \eqref{eq:sticky:NX&HX}.
    The identity \eqref{eq:sticky:V} follows from applying the projection to \eqref{eq:EA:DI} with \(\dot{X}_t\) replaced with \(V_t\).
    As a consequence of \eqref{eq:sticky:V} and \eqref{eq:sticky:NX&HX} we then have \(\Psi_t = \Proj{\Hscr_{X_t}}\Psi_s\).
    On the other hand, \(\Proj{\Hscr_{X_s}}\Psi_t = \Psi_t\) since \(\Psi_t\) is already constant where \(X_s\) is constant.
    Then, from \eqref{eq:EA:DI} it follows that \(\bar{\Psi}-\Psi_t \in \del I_{\Kscr}(X_t)\), and applying \(\Proj{\Hscr_{X_s}}\) to the left-hand side we obtain \(\Psi_s-\Psi_t\), which means that \eqref{eq:sticky:Psi} follows from \eqref{eq:sticky:PHXs&NXt};
    this also implies \(\Psi_t = \Proj{\Hscr_{X_t}}\Psi_s\), since \(\del I_{\Kscr}(X_t) \subset \Hscr_{X_t}^\perp\).
    
    We then note that for \(0 \le s \le r \le t\) we have
    \begin{equation*}
      \Psi_s - \Phi\ast\rho_r(X_r) - V_r = \Psi_s - \Psi_r \in \del I_{\Kscr}(X_r) \subset \del I_{\Kscr}(X_t),
    \end{equation*}
    and integrating this expression we obtain, cf.\ Remark \ref{rem:intCone},
    \begin{equation*}
      (t-s)\Psi_s -\int_s^t \Phi\ast\rho_r(X_r)\dee r + X_s - X_t \in \del I_{\Kscr}(X_t).
    \end{equation*}
    Then \eqref{eq:sticky:Xs} is a consequence of \eqref{eq:proj:K}, while \eqref{eq:sticky:X} follows from \(s = 0\) and \(\Psi_0 = \bar{\Psi}\). 
  \end{proof}
  \begin{rem}[Semigroup property]\label{rem:semig}
    From \eqref{eq:sticky:Psi} and \eqref{eq:sticky:Xs} we see that the sticky Lagrangian solution is a semigroup \(\hat{S}_t \colon (\bar{X},\bar{\Psi}) \to (X_t, \Psi_t)\).
    Furthermore, since \(\bar{V} = \bar{\Psi}-\Phi\ast\bar{\rho}(\bar{X})\), \(\rho_t = X_{t \#}\mathfrak{m}\), and \(V_t = \Psi_t - \Phi\ast\rho_t(X_t)\), this shows that also \(\check{S}_t \colon (\bar{X},\bar{V}) \to (X_t, V_t)\) is a semigroup.
  \end{rem}
  \begin{rem}[Reduction to pressureless Euler]\label{rem:proj:NS}
    In the case \(\phi \equiv 0\) we have \(\Phi \equiv 0\) and \(\bar{\Psi} = \bar{V}\), thereby recovering the projection formulas in \cite[Theorem 2.6]{natile2009wasserstein} for the pressureless Euler system.
  \end{rem}
  The formulas of Proposition \ref{prp:sticky:projection} could be useful for numerical implementations of the Euler-alignment system.
  For instance, in the case of pressureless Euler, i.e., \(\phi \equiv 0\), \cite[Section 4]{brenier1998sticky} presents a computational algorithm based on the convex envelope, which is connected to the projection \(\Proj{\Kscr}\) through Proposition \ref{prp:PK}, see \cite[Theorem 6.1]{natile2009wasserstein} for details.
  However, in our case such formulas are less straightforward, since \eqref{eq:sticky:X} depends on \(X_s\) for \(0 \le s \le t\);
  we will return to this point in Remark \ref{rem:proj:BGSW}, after we have established that our Lagrangian solutions are indeed sticky.
  To this end we will make use of a natural framework for a numerical method in this setting, namely particle approximations, which were the foundation for the front tracking approximations in \cite{leslie2023sticky}.
  
  \subsection{Sticky particle solutions}
  Since the dynamics derived in \cite{leslie2023sticky} is based on sticky particles, and both \cite{natile2009wasserstein} and \cite{brenier2013sticky} use limits of particle solutions to deduce sticky behavior, we will do the same here.
  Let us recall the motivation of the particle solutions from Section \ref{ss:partintro}, in particular we had \(\bs{m} \in \R^N_+\).
  For convenience, following \cite{brenier2013sticky}, we introduce the set \(\M^N\) defined as
  \begin{equation*}
    \M^N \coloneqq \left\{\bs{m} \in \R_+^N \colon \sum_{i=1}^{N}m_i = 1 \right\}.
  \end{equation*}
  For \(\bs{m} \in \M^N\) we can define a partition of \([0,1)\) through the quantities
  \begin{equation}\label{eq:thetai}
    \theta_0 \coloneqq 0, \qquad \theta_i \coloneqq \sum_{j=1}^{i}m_j, \quad i \in \{1,\dots,N\}.
  \end{equation}
  Let us denote by \(\chi_{A}\) the characteristic function of a subset \(A \subset [0,1]\).
  Then we can define the finite-dimensional Hilbert space
  \begin{equation}\label{eq:Hm}
    \Hscr_{\bs{m}} \coloneqq \left\{ X = \sum_{i=1}^{N} x_i \chi_{[\theta_{i-1}, \theta_i)} \colon (x_1,\dots,x_N) = \bs{x} \in \R^N \right\} \subset \Lp{2}(\Omega)
  \end{equation}
  and its convex cone
  \begin{equation}\label{eq:Km}
    \Kscr_{\bs{m}} \coloneqq \left\{ X = \sum_{i=1}^{N} x_i \chi_{[\theta_{i-1}, \theta_i)} \colon (x_1,\dots,x_N) = \bs{x} \in \K^N \right\} \subset \Kscr \subset \Lp{2}(\Omega).
  \end{equation}
  For \(X^N \in \Kscr_{\bs{m}}\) defined through \(\bs{x} \in \K^N\) and \(\bs{m} \in \M^N\), we can introduce the empirical measure \(\rho^N_{\bs{x}}\) and its cumulative distribution \(M^N\),
  \begin{equation*}
    \rho^N_{\bs{x}} = \sum_{i=1}^{N}m_i \delta_{x_i}, \qquad M^N(x) = \sum_{i=1}^{N}m_i H(x-x_i) = \sum_{i=1}^{N}\theta_i \chi_{[x_{i},x_{i+1})}(x),
  \end{equation*}
  where \(H(x)\) is the right-continuous Heaviside function.
  Then we know from before that \(M^N\) is the generalized inverse of \(X^N\).
  Moreover, we see that convolution of \(\Phi\) with \(\rho^N_{\bs{x}}\) yields exactly the ``discrete convolution'' \(\Phi_{\bs{m}}^\ast\) from Section \ref{s:particles},
  \begin{equation*}
    (\Phi\ast\rho^N)(x) = \sum_{j=1}^{N}m_j\Phi(x-x_j) = \sum_{j=1}^{N}\int_{\theta_{j-1}}^{\theta_j}\Phi(x-X^N(\omega))\dee\omega = \int_{\Omega} \Phi(x-X^N(\omega))\dee\omega.
  \end{equation*}
  
  \begin{rem}
    Let \(\bs{x},\bs{y} \in \K^N\) and respectively define \(X, Y \in \Kscr_{\bs{m}}\) for \(\bs{m} \in \M^N\) and the empirical measures \(\rho_{\bs{x}}, \rho_{\bs{y}} \in \Pscr(\R)\).
    Then we have the following identities
    \begin{equation*}
      W_p(\rho_{\bs{x}}, \rho_{\bs{y}}) = \norm{X-Y}_{\Lp{p}(\R)} = \norm{\bs{x}-\bs{y}}_{\bs{m},p},
    \end{equation*}
    that is, we recover the \(\bs{m}\)-weighted Euclidean \(p\)-norm from \eqref{eq:euclid}.
  \end{rem}
  
  Next we want to formulate the particle dynamics from Section \ref{s:particles} using the above framework.
  For a given \(\bs{m} \in \M^N\), suppose \(\bar{X}^N \in \Kscr_{\bs{m}}\) and \(\bar{V}^N \in \Hscr_{\bar{X}^N}\).
  We may without loss of generality assume the initial particles to be separate, i.e., \(\bar{\bs{x}} \in \mathrm{int}(\K^N)\); otherwise, we could reduce and relabel the particles.
  Then we may take \(\bar{\bs{v}} \in \R^N\), since \(T_{\bar{\bs{x}}}\K^N = \R^N\), and we define
  \begin{equation}\label{eq:XV:step}
    X^N_t \coloneqq \sum_{i=1}^{N} x_i(t) \chi_{[\theta_{i-1}, \theta_i)}, \qquad \diff{^+}{t}X_t^N =  V^N_t \coloneqq \sum_{i=1}^{N} v_i(t) \chi_{[\theta_{i-1}, \theta_i)}.
  \end{equation}
  Recalling \eqref{eq:psi:part}, we compute
  \begin{equation*}
    \bar{\Psi}^N = \Psi[\bar{X}^N, \bar{V}^N] = \bar{V}^N + (\Phi\ast\bar{\rho}^N)(\bar{X}^N) = \sum_{i=1}^{N} \left[\bar{v}_i + \sum_{j=1}^{N}m_j\Phi(\bar{x}_i-\bar{x}_j) \right] \chi_{[\theta_{i-1}, \theta_i)} = \sum_{i=1}^{N} \bar{\psi}_i \chi_{[\theta_{i-1}, \theta_i)}
  \end{equation*}
  and
  \begin{equation*}
    U[X_t^N] = \bar{\Psi}^N - (\Phi\ast\rho_t^N)(X_t^N) = \sum_{i=1}^{N} \left[\bar{\psi}_i - \sum_{j=1}^{N}m_j\Phi(x_i(t)-x_j(t)) \right] \chi_{[\theta_{i-1}, \theta_i)}
  \end{equation*}
  If we now let \(X_t^N\) satisfy
  \begin{equation*}
    \dot{X}^N_t = U[X_t^N] = \bar{\Psi}^N - (\Phi\ast\rho^N_t)(X^N_t), \quad X_0^N = \bar{X}^N,
  \end{equation*}
  this corresponds exactly to the particle evolution \eqref{eq:EA:part:DE} for the coefficients of the step functions \eqref{eq:XV:step}, which by Lemma \ref{lem:EA:part:unique} is uniquely determined with our inelastic collision rules.
  Here we have by construction \(V^N_0 = \bar{V}^N \in \Hscr_{X^N_0}\).
  Since there are \(N\) discontinuities in the initial \(\bar{X}\), i.e., \(N\) particles, there can at most be \(N-1\) collisions.
  Hence there will be a set of strictly increasing collision times \(\{t_k\}_{k=0}^{N^*+1}\), where for convenience we have included \(t_0 = 0\) and \(t_{N^*+1}=+\infty\), so that \(N^* \le N-1\) is the number of actual collisions.
  If we now consider \(t\in [t_{k},t_{k+1})\), by construction we have the relations
  \begin{equation*}
    \Hscr_{X^N_t} = \Hscr_{X^N_{t_{k}}}, \qquad V_t + \Phi\ast\rho^N_t(X^N_t) = V_{t_k}^N + \Phi\ast\rho^N_{t_k}(X^N_{t_k}) \iff \Psi_{t}^N = \Psi_{t_k}^N.
  \end{equation*}
  At the time of collisions \(\{t_k\}_{k=1}^{N^*}\), by the continuous trajectory of \(\bs{x}\) and inelastic collision rule \eqref{eq:v:proj} for \(\bs{v}\), we have
  \begin{equation*}
    X^N(t_k+) = X^N(t_k-), \qquad V^N(t_k+) = \Proj{\Hscr_{X^N(t_k)}}V^N(t_k-) \iff \Psi^N(t_k+) = \Proj{\Hscr_{X^N(t_k)}}\Psi^N(t_k-).
  \end{equation*}
  Note that we will occasionally write \(X^N(t_k\pm)\) rather than \(X^N_{t_k\pm}\), etc., to avoid cluttered subscripts.
  Now, since \eqref{eq:sticky:OX} and \eqref{eq:sticky:NX&HX} hold by construction, it follows that \(\Psi^N_t = \Proj{\Hscr_{X^N_t}}\bar{\Psi}^N\).
  It remains to show that this sticky evolution is in fact a Lagrangian solution of the differential inclusion \eqref{eq:EA:DI}, and the proof is similar to those of \cite[Theorem 4.2]{natile2009wasserstein} and \cite[Theorem 5.2]{brenier2013sticky}.
  
  \begin{prp}\label{prp:partsols}
    Let \((\bar{\bs{x}}, \bar{\bs{v}}, \bs{m}) \in \mathrm{int}( \K^N )\times\R^N\times\M^N \), and let \((\bar{X}^N, \bar{V}^N) \in \Kscr_{\bs{m}}\times \Hscr_{\bs{m}}\) be the corresponding step functions defined through \eqref{eq:thetai}--\eqref{eq:Km}.
    Then the corresponding sticky particle solution \(X^N_t\) is a sticky Lagrangian solution of \eqref{eq:EA:DI}. In particular, it satisfies the relations of Proposition \ref{prp:sticky:projection}.
  \end{prp}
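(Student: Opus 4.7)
The plan is to verify directly that the sticky particle solution $X^N_t$ solves the differential inclusion \eqref{eq:EA:DI} with $\dot{X}^N_t$ coinciding with the minimal selection from $U[X^N_t]-\del I_{\Kscr}(X^N_t)$, and then invoke the uniqueness in Theorem \ref{thm:LagSol} to identify $X^N_t$ with \emph{the} Lagrangian solution associated with $(\bar X^N,\bar V^N)$. Stickiness \eqref{eq:sticky:OX} will hold by the merger rule itself, after which the projection formulas of Proposition \ref{prp:sticky:projection} apply automatically.

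On an interval $(t_k,t_{k+1})$ between consecutive collision times, every cluster $\Jcal$ propagates with a common post-collision natural velocity $\psi_{\Jcal}=m_{\Jcal}^{-1}\sum_{k\in\Jcal}m_k\bar\psi_k$, so $X^N_t$ is $C^1$ into $\Lp{2}(\Omega)$ with derivative $V^N_t$ constant on each current cluster interval; in particular $V^N_t\in\Hscr_{X^N_t}\subset T_{X^N_t}\Kscr$. Comparing the reduced ODE \eqref{eq:EA:part:DE} with the definition \eqref{eq:U} of $U[X^N_t]$, the residual $\xi^N_t\coloneqq U[X^N_t]-V^N_t$ vanishes off $\Omega_{X^N_t}$, and on a cluster interval $[\theta_{i_*-1},\theta_{i^*})$ it takes the piecewise-constant value $\bar\psi_k-\psi_{\Jcal}$ on $[\theta_{k-1},\theta_k)$ for $k\in\Jcal$.

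The crux is to show $\xi^N_t\in N_{X^N_t}\Kscr$. By Lemma \ref{lem:NX} this amounts to the primitive $\Xi^N_t(m)\coloneqq\int_0^m\xi^N_t(\omega)\dee\omega$ being nonnegative on $[0,1]$ and vanishing off $\Omega_{X^N_t}$. Within a given cluster interval $\Xi^N_t$ starts at zero (its contribution from each previous cluster interval sums to zero by the definition of $\psi_{\Jcal}$), is piecewise linear with nodal increments $\sum_{k=i_*}^{j}m_k(\bar\psi_k-\psi_{\Jcal})\ge 0$ precisely by the barycentric lemma \eqref{eq:Oleinik:part}, and returns to zero at $\theta_{i^*}$ by the same definition; outside $\Omega_{X^N_t}$ there is nothing to prove. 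To upgrade this to the \emph{minimal} selection I note that, since $V^N_t$ is constant on every maximal interval of $\Omega_{X^N_t}$ and $\Xi^N_t$ vanishes at its endpoints, $\ip{\xi^N_t}{V^N_t}=0$ follows from integration on each such interval; together with $V^N_t\in T_{X^N_t}\Kscr$ and $\xi^N_t\in N_{X^N_t}\Kscr$, the Moreau decomposition identifies $V^N_t=\Proj{T_{X^N_t}\Kscr}U[X^N_t]$.

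Finally, across the finitely many collision times $X^N_t$ remains continuous into $\Lp{2}(\Omega)$ (positions are continuous; only velocities jump), hence $t\mapsto X^N_t$ is absolutely continuous on $[0,\infty)$ and the gradient-flow inclusion \eqref{eq:gradflow} is satisfied almost everywhere by the minimal selection $V^N_t$; uniqueness from Theorem \ref{thm:LagSol} then completes the identification. The main obstacle is the normal-cone membership $\xi^N_t\in N_{X^N_t}\Kscr$, whose proof is the $\Lp{2}$-reformulation of the barycentric lemma encoding the inelastic collision rule \eqref{eq:v:proj}.
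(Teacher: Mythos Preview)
Your argument is correct and reaches the same conclusion, but the route differs from the paper's. The paper argues by induction on the collision times: assuming $\bar{\Psi}^N-\Psi^N(t_k-)\in\del I_{\Kscr}(X^N_{t_k})$, it uses that the pre-collision values satisfy $-\Psi^N(t_k-)\in T_{X^N_{t_k}}\Kscr$ together with the abstract relation \eqref{eq:HX&TX&NX} to deduce $\Psi^N(t_k-)-\Psi^N_{t_k}\in\del I_{\Kscr}(X^N_{t_k})$, and then adds these two normal-cone elements. In effect the paper \emph{reproves} the barycentric inequality in the course of the induction, one merger at a time. You instead take \eqref{eq:Oleinik:part} as already established for every cluster and invoke Lemma~\ref{lem:NX} directly, reading off that the piecewise-linear primitive $\Xi^N_t$ is nonnegative and vanishes at the cluster endpoints. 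Your approach makes the link between the normal-cone membership and the Ole\u{\i}nik~E condition completely explicit, and your extra step identifying $V^N_t=\Proj{T_{X^N_t}\Kscr}U[X^N_t]$ via Moreau's decomposition is a nice bonus (though, as you note, redundant once uniqueness is invoked). One small wording issue: saying ``$\xi^N_t$ vanishes off $\Omega_{X^N_t}$'' is literally a measure-zero statement, since for step functions $\Omega\setminus\Omega_{X^N_t}$ is just the finite set of jump points; what you mean (and what your argument uses) is that $\xi^N_t$ vanishes on \emph{singleton} cluster intervals, so that the primitive $\Xi^N_t$ is zero at every jump point of $X^N_t$.
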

  \begin{proof}
    From construction it is clear that the monotonicity property \eqref{eq:sticky:OX} is satisfied.
    We will prove by induction on the collision times that \(X^N_t\) satisfies \eqref{eq:EA:DI},
    which by right-continuity is equivalent to
    \begin{equation}\label{eq:EA:DI:step}
      U[X^N_t] - V_t^N = \bar{\Psi}^N - \Psi^N_t \in \del I_{\Kscr}(X_t^N).
    \end{equation}
    Consider \(t \in [0,t_1)\), then by construction \(\Hscr_{X^N_t} = \Hscr_{\bar{X}^N}\).
    Therefore, \(V_t^N = U[X_t^N]\), or \(\Psi_t^N = \bar{\Psi}^N\), leading to \(0 \in \del I_{\Kscr}(X^N_t)\), which is true.
    
    Assume next that \eqref{eq:EA:DI} is satisfied for \(t \in [t_{k-1}, t_k)\), and let us consider \(t \in [t_k,t_{k+1})\) instead.
    By construction we have \(\Hscr_{X^N_t} = \Hscr_{X^N_{t_k}}\), \(V^N(t_k+) = \Proj{\Hscr_{X^N_t}}V^N(t_k-)\), \(\Psi^N(t_k+) = \Proj{\Hscr_{X^N_t}}\Psi^N(t_k-)\) and by hypothesis
    \begin{equation*}
      U[X^N(t_k)] - V^N(t_k-) = \bar{\Psi}^N - \Psi^N(t_k-) \in \del I_{\Kscr}(X^N_{t_k}).
    \end{equation*}
    On the other hand, \(-V^N(t_k-) \in T_{X^N_{t_k}}\Kscr\), and also \(-\Psi^N(t_k-) \in T_{X^N_{t_k}}\Kscr\), which by \eqref{eq:HX&TX&NX} yields \(\Psi^N(t_k-)-\Psi^N_{t_k} \in \del I_{\Kscr}(X^N_{t_k})\).
    Now, since \(\del I_{\Kscr}(X_{t_k})\) is a cone, we add the previous equations together, use \(\Psi^N_t = \Psi^N_{t_k}\) and the monotonicity \eqref{eq:sticky:NX&HX} to deduce that \eqref{eq:EA:DI:step} holds.
  \end{proof}
  Figure \ref{fig:partcoll} illustrates the idea in the above proof with the help of the six particles and their natural velocities from Figure \ref{fig:discflux}.
  Suppose the particles are initially separated and following trajectories corresponding to their natural velocities. Then at time \(t = t_1\) particles 3, 4 and 5 collide, and \(\psi_i(t_1)\) for \(i\in \{3,4,5\}\) is determined by \eqref{eq:RH:part}.
  The corresponding step functions \(\bar{\Psi}^6\) and \(\Psi^6_{t_1}\) are shown in Figure 5, together with the primitive of their difference.
  Compare this primitive to the function in Figure \ref{fig:NXillust}, and note how we must have \(\bar{\Psi}^6-\Psi^6_{t_1} \in \del I_{\Kscr}(X_{t_1})\); this the barycentric lemma \eqref{eq:Oleinik:part} in an \(L^2\)-setting.
  \begin{figure}
    \includegraphics[width=0.7\linewidth]{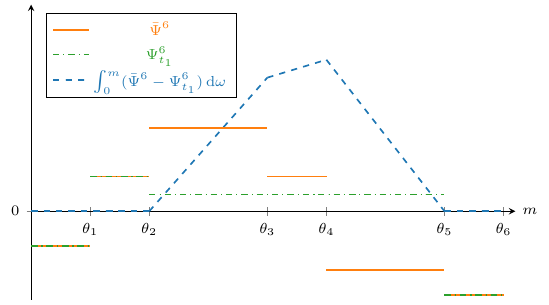}
    \caption{Consider the six particles from Figure \ref{fig:discflux}, where the first collision happens between particles 3, 4 and 5 at time \(t_1\). We have plotted the corresponding step functions \(\bar{\Psi}^6\) and \(\Psi^6_{t_1}\), as well as the primitive of their difference.} 
    
    \label{fig:partcoll}
  \end{figure}
  
  \subsubsection{The globally sticky evolution}
  We now return to the Lagrangian solution from Theorem \ref{thm:LagSol}, and as in \cite[Lemma 5.1]{natile2009wasserstein}, see also \cite[Remarks 5.3, 5.4]{brenier2013sticky}, we want to use the sticky particle evolution above to show that it is globally sticky in the sense of Definition \ref{dfn:LagSol:sticky}.
  To this end we will use the fact that functions of the form \eqref{eq:Hm} are dense in \(\Lp{2}(\Omega)\), and so there are sequences \(\{N_n\}\) and \((\bar{\bs{x}}_n,\bar{\bs{v}}_n,\bar{\bs{m}}_n) \in \K^{N_n}\times\R^{N_n}\times\M^{N_n}\) such that the corresponding \((\bar{X}^{N_n}, \bar{V}^{N_n}) \) converge strongly to \((\bar{X}, \bar{V})\) in \(\Lp{2}(\Omega)\).
  For ease of notation we will then use the superscript \(n\) rather than \(N_n\) in the corresponding quantities.
  The strong convergence and Lemma \ref{lem:PhiConv:unicont} then shows that also \(\Psi[\bar{X}^n,\bar{V}^n]\) converges strongly to \(\Psi[\bar{X},\bar{V}]\).
  On the other hand, assuming only weak convergence for the velocity, we have
  \begin{equation*}
    X^n \to X, \ V^n \rightharpoonup V \ \text{in } \Lp{2}(\Omega) \implies \Psi[X^n,V^n] \rightharpoonup \Psi[X,V] \ \text{in } \Lp{2}(\Omega). 
  \end{equation*}
  Then we have the following result.
  \begin{thm}\label{thm:convergence}
    Let \((X^n_t, V^n_t) \in \Kscr_{\bs{m}}\times\Hscr_{\bs{m}}\), where \(V^n_t \coloneqq \diff{^+}{t}X^n_t\), be sticky particle solutions of \eqref{eq:EA:DI}, as defined in Proposition \ref{prp:partsols}, for which \(\bar{X}_n\) and \(\bar{V}_n\) respectively converge to \(\bar{X}\) and \(\bar{V}\) in \(\Lp{2}(\Omega)\).
    Then:
    \begin{enumerate}[label={\upshape(\alph*)}]
      \item \(X^n_t\) converges to \(X_t\) in \(\Lp{2}(\Omega)\) uniformly in each compact interval, where \(X_t\) is Lipschitz curve with values in \(\Kscr\).
      \item The Lipschitz curve \(X_t\) is a sticky Lagrangian solution of \eqref{eq:EA:DI}, in particular \(X_t\) and \(V_t = \diff{^+}{t}X_t\) satisfy the properties of Corollary \ref{cor:LagSol:props} and Proposition \ref{prp:sticky:projection}.
      \item \(V^n_t\) converges strongly to \(V_t\) in \(\Lp{2}(0,T;\Lp{2}(\Omega))\) for every \(T > 0\).
      \item For any weak accumulation point \(V'\) of \(V^n_t\), we have \(\Proj{\Hscr_{X_t}}V' = V_t\). Similarly, for \(\Psi' \coloneqq V' + \Phi\ast\rho(X_t) \) we have \(\Proj{\Hscr_{X_t}}\Psi' = \Psi_t\).
      \item Let \(\Tcal \subset (0,\infty)\) be the countable set of discontinuities for \(t \mapsto \norm{V_t}_{\Lp{2}(\Omega)}\).
      Then \(V^n_t \to V_t\) and \(\Psi^n_t \to \Psi_t\) in \(\Lp{2}(\Omega)\) for every \(t \in [0,\infty)\setminus\Tcal\).
    \end{enumerate}
  \end{thm}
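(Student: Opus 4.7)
The backbone of the proof is the pair of stability estimates \eqref{eq:X:stab} and \eqref{eq:V:stab} from Corollary \ref{cor:LagSol:props}(f), applied between any two particle solutions; by Proposition \ref{prp:partsols}, each $X^n$ is a Lagrangian solution, so these estimates are at our disposal. Strong convergence of $\bar{X}^n$ and $\bar{V}^n$, combined with Lemma \ref{lem:PhiConv:unicont}, yields $\bar{\Psi}^n \to \bar{\Psi}$ in $\Lp{2}(\Omega)$. Then \eqref{eq:X:stab} shows $\{X^n\}$ is Cauchy in $C([0,T];\Lp{2}(\Omega))$ for every $T > 0$, producing a uniform limit $X_t \in \Kscr$ (by closedness of the cone), with the Lipschitz bound inherited by applying \eqref{eq:X:stab} in time; this proves (a). Analogously, \eqref{eq:V:stab} makes $\{V^n\}$ Cauchy in $\Lp{2}(0,T;\Lp{2}(\Omega))$, and its strong limit $V$ is identified with $\dot{X}_t$ via $\int_0^t V^n_s\dee s = X^n_t - \bar{X}^n$ together with part (a); this yields (c).

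For (b), Lemma \ref{lem:PhiConv:unicont} together with part (a) gives $U[X^n_t] \to U[X_t]$ strongly in $\Lp{2}(0,T;\Lp{2}(\Omega))$. Writing the gradient flow inclusion as $\xi^n_t := U[X^n_t] - V^n_t \in \del I_{\Kscr}(X^n_t)$, the strong convergences of $X^n_t$ and $\xi^n_t$ (for a.e.\ $t$ after subsequence extraction) together with the demiclosedness of the graph of the maximal monotone operator $\del I_{\Kscr}$ yield $U[X_t] - V_t \in \del I_{\Kscr}(X_t)$ for a.e.\ $t$; uniqueness in Theorem \ref{thm:LagSol} then identifies $X_t$ with the Lagrangian solution of that theorem. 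To transfer stickiness from the particle approximations, the plan is to pass to the limit in the projection identity \eqref{eq:sticky:X} satisfied by each sticky $X^n$: uniform convergence $X^n \to X$, $\Lp{2}$-continuity of $\Proj{\Kscr}$, Lemma \ref{lem:PhiConv:unicont} applied pointwise in $s$, and dominated convergence yield
\begin{equation*}
  X_t = \Proj{\Kscr}\left( \bar{X} + t\bar{\Psi} - \int_0^t \Phi\ast\rho_s(X_s)\dee s \right), \quad t \ge 0.
\end{equation*}
Combining this identity with the semigroup reduction of Remark \ref{rem:semig} and the characterization of $N_X\Kscr$ in Lemma \ref{lem:NX} gives the monotonicity \eqref{eq:sticky:OX}, completing (b).

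For (d), uniform boundedness of $\norm{V^n_t}_{\Lp{2}(\Omega)}$ from the gradient flow structure provides weak precompactness. Given a weak accumulation point $V'$ of $V^n_t$, self-adjointness of $\Proj{\Hscr_{X_t}}$ gives $\Proj{\Hscr_{X_t}}V^n_t \rightharpoonup \Proj{\Hscr_{X_t}}V'$, while an interval-by-interval comparison -- using that $V^n_t$ is constant on the maximal intervals of $\Omega_{X^n_t}$ together with the uniform convergence $X^n \to X$ from (a) -- forces $\Proj{\Hscr_{X_t}}V^n_t \to V_t$ strongly; combining the two gives $\Proj{\Hscr_{X_t}}V' = V_t$, and the statement for $\Psi'$ is identical after writing $\Psi^n_t = V^n_t + \Phi\ast\rho^n_t(X^n_t)$ and invoking Lemma \ref{lem:PhiConv:unicont}. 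Part (e) follows by combining (c) with the right-continuity of $t \mapsto V_t$ at $t \notin \Tcal$ from Corollary \ref{cor:LagSol:props}(d), promoting $\Lp{2}$-in-time convergence to pointwise strong convergence off the countable exceptional set $\Tcal$.

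The delicate steps are the subdifferential passage and the transfer of stickiness in (b), and the projection identification in (d). The stability estimate \eqref{eq:V:stab} is indispensable in (b) to secure the strong -- rather than merely weak -- convergence $\xi^n_t \to \xi_t$ needed to invoke demiclosedness of $\del I_{\Kscr}$; stickiness must then be transferred through the projection formula, since $\Lp{2}$-convergence does not preserve constancy intervals and one cannot pass $\Omega_{X^n_s} \subset \Omega_{X^n_t}$ directly to the limit. Likewise in (d), the subspaces $\Hscr_{X^n_t}$ and $\Hscr_{X_t}$ are not nested in general, so identifying $\Proj{\Hscr_{X_t}}V^n_t \to V_t$ requires a careful treatment of the maximal constancy intervals of $X^n_t$ in relation to those of $X_t$ as $n \to \infty$.
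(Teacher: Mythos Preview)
Your treatment of (a) and (c) via the stability estimates, and your identification of \(X_t\) as the Lagrangian solution in (b) via demiclosedness of \(\del I_{\Kscr}\), are essentially the paper's arguments. The genuine gaps are in the stickiness part of (b), and in (d) and (e).

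For stickiness in (b), passing to the limit in the projection formula \eqref{eq:sticky:X} is fine, but deducing the monotonicity \eqref{eq:sticky:OX} from it is not: Proposition~\ref{prp:sticky:projection} derives the projection formula \emph{from} stickiness, not conversely, and Remark~\ref{rem:semig} is likewise stated for solutions already known to be sticky, so invoking it here is circular. The paper argues differently: once \(X_t\) is identified with the \emph{unique} Lagrangian solution, one may replace the given approximation by a new particle sequence chosen to be constant on any prescribed interval of \(\Omega_{\bar{X}}\); stickiness of that particle sequence then forces \(X_t\) to be constant there, yielding \(\Omega_{\bar{X}}\subset\Omega_{X_t}\). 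Restarting from time \(s\) (again by uniqueness) gives \(\Omega_{X_s}\subset\Omega_{X_t}\).

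For (d), the claim that an ``interval-by-interval comparison'' forces \(\Proj{\Hscr_{X_t}}V^n_t\to V_t\) strongly is not justified: the constancy intervals of \(X^n_t\) need bear no relation to those of \(X_t\) (indeed \(X^n_t\) may have none), so there is nothing to compare. The paper avoids this entirely by using the strong--weak closedness of the graph of the maximal monotone operator \(\del\bar{\Vcal}\): from \(X^n_t\to X_t\) strongly and \(V^{n_k}_t\rightharpoonup V'\) one gets \(V'\in U[X_t]-\del I_{\Kscr}(X_t)\) directly, and since \(\del I_{\Kscr}(X_t)\subset\Hscr_{X_t}^\perp\), projecting onto \(\Hscr_{X_t}\) and invoking the sticky identity \eqref{eq:sticky:V} gives \(\Proj{\Hscr_{X_t}}V'=V_t\).

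For (e), \(\Lp{2}(0,T;\Lp{2}(\Omega))\)-convergence together with right-continuity of the limit does \emph{not} promote to pointwise strong convergence off a countable set; some pointwise control on the approximants is needed. The paper supplies it via the monotonicity of \(t\mapsto\norm{V^n_t}_{\Lp{2}(\Omega)}\) from Corollary~\ref{cor:LagSol:props}(d): along a subsequence converging strongly at a dense set of times (extracted from (c)), one bounds \(\norm{V'}\le\limsup_k\norm{V^{n_k}_t}\le\limsup_k\norm{V^{n_k}_s}=\norm{V_s}\) for \(s<t\), and letting \(s\uparrow t\) through continuity points gives \(\norm{V'}\le\norm{V_t}\). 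Since \(V'\) lies in \(U[X_t]-\del I_{\Kscr}(X_t)\) by (d) and \(V_t\) is its element of minimal norm, \(V'=V_t\) and \(\limsup_k\norm{V^{n_k}_t}\le\norm{V_t}\), which upgrades weak to strong convergence.
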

  \begin{proof}
    Property (a) follows from the stability estimate \eqref{eq:X:stab}; indeed, on every compact time interval \(X^n_t\) is a Cauchy sequence in the closed cone \(\Kscr \subset \Lp{2}(\Omega)\).
    Furthermore, by property (d) of Corollary \ref{cor:LagSol:props}, \(X^n_t\) has Lipschitz-constant bounded by \(\norm{\bar{V}^n}_{\Lp{2}(\Omega)}\).
    From the uniform convergence on each compact and the strong convergence \(\bar{V}^n \to \bar{V}\) it follows that the limit function \(X_t\) has Lipschitz-constant bounded by \(\norm{\bar{V}}_{\Lp{2}(\Omega)}\).
    
    Property (b): That \(X_t\) is a Lagrangian solution follows from stability results for gradient flows in Hilbert spaces; indeed, by assumption it is a weak solution in the sense of \cite[Definition 3.1]{brezis1973operateurs}.
    That it is a Lagrangian solution according to Definition \ref{dfn:gradflow}, i.e., a strong solution in the sense of \cite[Definition 3.1]{brezis1973operateurs}, follows from \cite[Proposition 3.2]{brezis1973operateurs} and \(X_t\) being absolutely continuous on each compact due to (a).
    That \(X_t\) is sticky follows from that \(X^n_t\) is sticky and the stability estimate \eqref{eq:X:stab}, cf.\ \cite[Remark 5.4]{brenier2013sticky}.
    Indeed, from the strong convergence \(\bar{X}^n \to \bar{X}\) it follows that \(\Omega_{\bar{X}} \subset \Omega_{X_t}\).
    By uniqueness, we obtain the same solution \((X_t,V_t)\) by taking \((\bar{X},\bar{V}) = (X_s, V_s)\) for \(0 < s < t\). and evolve this to time \(t-s\); then by the same argument \(\Omega_{X_s} \subset \Omega_{X_t}\).
    
    Property (c) follows directly from the stability estimate \eqref{eq:V:stab}.
    
    Property (d): Let \(n_k\) be an arbitrary subsequence such that \(V^{n_k}_t \rightharpoonup V'\) in \(\Lp{2}(\Omega)\).
    Since the subdifferential of \(\Vcal(X)\) is maximally monotone, its graph is also strongly-weakly closed, meaning we can pass to the limit in \(V^n_t \in U[X_t^n] -\del I_{\Kscr}(X^n_t) \) to obtain \(V' \in U[X_t] - \del I_{\Kscr}(X_t)\).
    Recalling that \(\del I_{\Kscr}(X) \subset \Hscr_{X}^\perp\), we can project the previous relation to find \(\Proj{\Hscr_{X_t}} V' = \Proj{\Hscr_{X_t}}U[X_t]\). Since \(X_t\) is a sticky solution it then follows from \eqref{eq:sticky:V} that \(\Proj{\Hscr_{X_t}} V' = V_t\).
    
    Property (e): Let \(t \in (0,\infty)\setminus\Tcal\), and let \(n_k\), \(V'\) be as in the previous point.
    From the strong convergence in point (c), there exists a dense set \(\Scal \subset (0,\infty)\) we can extract a further subsequence, not relabeled, such that \(V^{n_k}(s) \to V(s)\) for every \(s \in \Scal\).
    In particular, for \(s \in \Scal\) such that \(s < t\) we have
    \begin{equation*}
      \norm{V'}_{\Lp{2}(\Omega)} \le \limsup_{k\to\infty} \norm{V_t^{n_k}}_{\Lp{2}(\Omega)} \le \limsup_{k\to\infty} \norm{V_s^{n_k}}_{\Lp{2}(\Omega)} = \norm{V_s}_{\Lp{2}(\Omega)}.
    \end{equation*}
    Since \(t\) is a point of continuity, we may approach it from below to obtain
    \begin{equation*}
      \norm{V'}_{\Lp{2}(\Omega)} \le \norm{V_t}_{\Lp{2}(\Omega)} \le \norm{U[X_t]-\xi}_{\Lp{2}(\Omega)}, \quad \forall \: \xi \in \del I_{\Kscr}(X_t).
    \end{equation*}
    Then, as \(V_t\) is the unique minimal element of \(U[X_t]-\del I_{\Kscr}(X_t)\), it follows that \(V' = V_t\) and
    \begin{equation*}
      \limsup_{k\to\infty}\norm{V^{n_k}_t}_{\Lp{2}(\Omega)} \le \norm{V_t}_{\Lp{2}(\Omega)},
    \end{equation*}
    which in turn gives the strong convergence of \(V^{n_k}_t\) to \(V_t\). Furthermore, since the subsequence was arbitrary, it follows that the whole sequence \(V^n_t\) converges strongly to \(V_t\).
  \end{proof}
  
  \begin{cor}\label{cor:stickyness}
    Let \(\bar{X} \in \Kscr\) and \(\bar{V} \in \Hscr_{\bar{X}}\). The corresponding Lagrangian solution \(X_t\) of Theorem \ref{thm:LagSol} is globally sticky. In particular, the results of Proposition \ref{prp:sticky:projection} apply to \(X_t\) and its velocity \(V_t\) from Corollary \ref{cor:LagSol:props}.
    Furthermore, the identity \(V_t = v_t \circ X_t\) from \eqref{eq:V:comp} holds for all \(t \ge 0\). 
  \end{cor}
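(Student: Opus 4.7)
The plan is to realize the gradient flow $X_t$ from Theorem \ref{thm:LagSol} as an $L^2$-limit of sticky particle solutions, and then invoke uniqueness to conclude stickiness.

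First, I would construct an approximating sequence of particle data $(\bar{X}^n, \bar{V}^n) \in \Kscr_{\bs{m}_n} \times \Hscr_{\bs{m}_n}$, whose discrete representatives $\bar{\bs{x}}^n$ have pairwise distinct entries (so that $\bar{\bs{x}}^n \in \mathrm{int}(\K^{N_n})$), and which converge strongly in $\Lp{2}(\Omega) \times \Lp{2}(\Omega)$ to $(\bar{X}, \bar{V})$. A natural choice is to partition $\Omega=(0,1)$ into $n$ equal subintervals, set $\bar{X}^n$ and $\bar{V}^n$ to be the averages of $\bar{X}$ and $\bar{V}$ on each subinterval, and perturb $\bar{X}^n$ by an arbitrarily small amount to enforce distinctness without disrupting the $\Lp{2}$-convergence. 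Classical density of step functions (equivalently, the Lebesgue differentiation theorem) yields this convergence. Note that the requirement $\bar{V} \in \Hscr_{\bar{X}}$ is not needed to build the approximation, since on the discrete side $\Hscr_{\bar{X}^n} = \Lp{2}(\Omega)$ after enforcing distinctness.

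With this initial data, Proposition \ref{prp:partsols} produces sticky particle solutions $X^n_t$, whose common limit $X_t$, provided by Theorem \ref{thm:convergence}(b), is itself a sticky Lagrangian solution of \eqref{eq:EA:DI}. Because Theorem \ref{thm:LagSol} guarantees uniqueness of the Lagrangian solution starting from $(\bar{X}, \bar{V})$, this limit $X_t$ must coincide with the gradient flow solution of the corollary, and so that solution is globally sticky in the sense of \eqref{eq:sticky:OX}. All projection formulas and other conclusions of Proposition \ref{prp:sticky:projection} then follow at once.

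Finally, the identity $V_t = v_t \circ X_t$ needs to be promoted from $t \in (0,\infty) \setminus \Tcal$ (where it is given by \eqref{eq:V:comp}) to all $t \ge 0$. Since the Lagrangian solution is now globally sticky, \eqref{eq:sticky:HX} gives $V_t \in \Hscr_{X_t}$ for every $t \ge 0$, i.e., $V_t$ is constant on each maximal interval of $\Omega_{X_t}$. Hence $V_t$ factors through $X_t$ via a measurable $v_t$ defined $\rho_t$-a.e., with $v_t \in \Lp{2}(\R,\rho_t)$ by the push-forward identity \eqref{eq:push}. The only real delicacy in the whole argument is the construction of a particle approximation that simultaneously converges strongly in $\Lp{2}$ and has distinct positions, which amounts to a harmless perturbation; everything else is a direct assembly of the earlier results.
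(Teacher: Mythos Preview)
Your approach is correct and matches the paper's proof, which simply invokes density of step functions and Theorem \ref{thm:convergence}; you have fleshed out the details the paper leaves implicit. One small slip: after enforcing distinctness of the particle positions, $\bar{X}^n$ is still a step function constant on each $[\theta_{i-1},\theta_i)$, so $\Omega_{\bar{X}^n}$ is the union of these open intervals and $\Hscr_{\bar{X}^n} = \Hscr_{\bs{m}_n}$, not all of $\Lp{2}(\Omega)$---but since your $\bar{V}^n$ is built on the same partition it lies in $\Hscr_{\bs{m}_n}$ automatically, and the argument is unaffected.
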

  \begin{proof}
    Since step functions are dense in \(\Lp{2}(\Omega)\), we can  approximate \(\bar{X}\), \(\bar{V}\) with  \(\bar{X}^n\), \(\bar{V}^n\) which converge strongly in \(\Lp{2}(\Omega)\), and apply Theorem \ref{thm:convergence}.
  \end{proof}
  \begin{rem}[Projection formulas]\label{rem:proj:BGSW}
    The attractive Euler--Poisson system, obtained by replacing the velocity-alignment forcing term in \eqref{eq:EA:mom} with the Poisson force \(-\tfrac{\alpha}{2}(\sgn\ast\rho_t)\rho_t\) for \(\alpha > 0\), also has sticky Lagrangian solutions.
    Its corresponding projection formula, cf.\ \cite[Example 6.9]{brenier2013sticky}, is
    \begin{equation*}
      X_t = \Proj{\Kscr} \left(\bar{X} + t \bar{V} - \frac{\alpha}{4}t^2 (2m-1) \right),
    \end{equation*}
    which for \(\alpha = 0\) reduces to the formula for the pressureless Euler system from \cite{natile2009wasserstein} mentioned in Remark \ref{rem:proj:NS}.
    This projection is particularly simple in that the expression being projected can be computed independently of the intermediate values \(X_s\) for \(0 < s < t\); indeed, it is uniquely determined by the initial data.
    This differs from the projection formula \eqref{eq:sticky:X}, which is more similar to the projection formula in \cite[Proposition 3.8]{brenier2013sticky} for a more general sticking force term \(f[\rho_t]\) with Lagrangian representation \(F[X_t]\).
    This aligns with the observation in \cite{leslie2024finite} that one cannot take the (free-flow) particle trajectories of \cite{ha2019complete}, which evolve according to the natural velocities \(\bar{\psi}_i\) and allow for crossings, and project these to recover the sticky particle trajectories.
  \end{rem}
  
  \section{From gradient flows to other solutions}\label{s:other}
  In this section we will use the sticky Lagrangian solution established in the previous section to obtain solutions for other equations.
  Naturally, in the spirit of \cite{natile2009wasserstein,brenier2013sticky}, we will return to the original Euler-alignment system \eqref{eq:EA} and show that the Lagrangian solution provides us with a distributional solution according to Definition \ref{dfn:EA:distsol}.
  
  On the other hand, another main objective of this study is to show that the sticky particle dynamics obtained in \cite{leslie2023sticky} by means of the scalar balance law \eqref{eq:blaw} can be realized from a gradient flow point-of-view.
  To this end, following the ideas of \cite{carrillo2023equiv}, we will see how this balance law can be formally derived from a scalar conservation law, where the flux function is the primitive of the prescribed velocity \(U[X_t]\) in \eqref{eq:U}.
  More importantly, we show that the Lagrangian solution also provides us with an entropy solution of \eqref{eq:blaw}.
  
  \subsection{A distributional solution of the Euler-alignment system}
  Now we return to the Euler-alignment system, recalling the (non-complete) metric space \((\Tscr_2,D_2)\) from \eqref{eq:Tp}, \eqref{eq:Dp}, and propose the following result.
  \begin{thm}\label{thm:EAsol}
    Suppose \((\bar{\rho},\bar{v}) \in \Tscr_2\). Define \(\bar{X}\) according to \eqref{eq:X}, i.e., \(\bar{\rho} = \bar{X}_\#\mathfrak{m}\), and \(\bar{V} = \bar{v}\circ\bar{X}\).
    Then the Lagrangian solution of Theorem \ref{thm:LagSol} provides us with a distributional solution \((\rho_t, v_t) \in \Tscr_2\) of \eqref{eq:EA} in the sense of Definition \ref{dfn:EA:distsol}, with \(\rho_t\) and \(v_t\) given by Corollaries \ref{cor:LagSol:props} and \ref{cor:stickyness}.
    In particular, since the Lagrangian solution is globally sticky, \(S_t \colon (\bar{\rho},\bar{v}) \to (\rho_t,v_t)\) is a semigroup in the metric space  \((\Tscr_2, D_2)\).
  \end{thm}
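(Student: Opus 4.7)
The plan is to transfer the Lagrangian solution of Theorem~\ref{thm:LagSol} to the Eulerian picture through the isometry of $\Pscr_2(\R)$ with $\Kscr$. Associate $(\bar{\rho},\bar{v})$ with $(\bar{X},\bar{V}) = (\bar{X},\bar{v}\circ\bar{X}) \in \Kscr\times\Lp{2}(\Omega)$, and let $X_t$ be the resulting gradient-flow curve with right-derivative $V_t$. Since $\bar{V} \in \Hscr_{\bar{X}}$, Corollary~\ref{cor:stickyness} gives global stickiness and $V_t \in \Hscr_{X_t}$ for every $t \ge 0$, so setting $\rho_t \coloneqq X_{t\#}\mathfrak{m}$ there is a unique $v_t \in \Lp{2}(\R,\rho_t)$ with $V_t = v_t\circ X_t$, placing $(\rho_t,v_t)$ in $\Tscr_2$. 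The initial condition $\rho_t \to \bar{\rho}$ in $\Pscr_2(\R)$ reduces under the $W_2$-isometry to the $\Lp{2}(\Omega)$-continuity of $X_t$; for $v_t\rho_t \to \bar{v}\bar{\rho}$ in $\Mscr(\R)$, testing against a bounded continuous $\eta$ gives $\int_\R \eta\,v_t\dee\rho_t = \int_\Omega \eta(X_t)V_t\dee m$, and the right-continuity \eqref{eq:V:rightcont}, applicable because $\bar{V} \in \Hscr_{\bar{X}} \subset T_{\bar{X}}\Kscr$, closes the convergence.

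For the distributional continuity equation, a change of variables through $X_t$ rewrites testing against $\varphi \in C_c^\infty(\R \times [0,T))$ as
\begin{equation*}
  \int_0^T \!\!\int_\Omega \bigl(\partial_t\varphi(X_t,t) + V_t\,\partial_x\varphi(X_t,t)\bigr)\dee m\dee t + \int_\Omega \varphi(\bar{X},0)\dee m = 0.
\end{equation*}
Lipschitz continuity of $t \mapsto X_t$ in $\Lp{2}(\Omega)$ makes the scalar map $t \mapsto \int_\Omega \varphi(X_t,t)\dee m$ absolutely continuous with a.e.\ derivative $\int_\Omega (\partial_t\varphi(X_t,t) + V_t\partial_x\varphi(X_t,t))\dee m$ (via the right-derivative $V_t$ of Corollary~\ref{cor:LagSol:props}); the fundamental theorem of calculus together with $\varphi(\cdot,T) \equiv 0$ then closes the identity.

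The momentum equation is handled via the auxiliary form \eqref{eq:EAalt:mom}. The key observation is that $\partial_t\varphi(X_t,t) + V_t\,\partial_x\varphi(X_t,t) \in \Hscr_{X_t}$: each of $V_t$, $\partial_t\varphi(X_t,t)$ and $\partial_x\varphi(X_t,t)$ is constant on every maximal interval of $\Omega_{X_t}$, and $\Hscr_{X_t}$ is closed under pointwise products. Combined with the projection identity $\Psi_t = \Proj{\Hscr_{X_t}}\bar{\Psi}$ from \eqref{eq:sticky:V} this gives
\begin{equation*}
  \int_\Omega \Psi_t\bigl(\partial_t\varphi(X_t)+V_t\,\partial_x\varphi(X_t)\bigr)\dee m = \int_\Omega \bar{\Psi}\bigl(\partial_t\varphi(X_t)+V_t\,\partial_x\varphi(X_t)\bigr)\dee m = \diff{}{t}\!\int_\Omega \bar{\Psi}\,\varphi(X_t,t)\dee m,
\end{equation*}
and integrating on $[0,T]$ produces the weak form of \eqref{eq:EAalt:mom}. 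To recover \eqref{eq:EA:mom}, split $\Psi_t = V_t + \Phi\ast\rho_t(X_t)$ and differentiate $H(t) \coloneqq \int_\Omega \Phi\ast\rho_t(X_t)\,\varphi(X_t,t)\dee m$. The chain rule produces the term $\int_\Omega \varphi(X_t)\int_\Omega \phi(X_t(m)-X_t(\omega))(V_t(m)-V_t(\omega))\dee\omega\dee m$, which after pushforward equals the alignment source $\int_\R \varphi\bigl[v_t(\phi\ast\rho_t) - (\phi\ast(\rho_t v_t))\bigr]\dee\rho_t$; subtracting from the weak form of \eqref{eq:EAalt:mom} delivers the weak form of \eqref{eq:EA:mom}.

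The main obstacle is justifying the chain rule for $H'(t)$ when $\phi$ is weakly singular. Here global stickiness is decisive: on the diagonal $\{X_t(m)=X_t(\omega)\}$, which contains the singular locus of $\phi$, the inclusion $V_t \in \Hscr_{X_t}$ forces $V_t(m)-V_t(\omega)=0$, so the product $\phi(X_t(m)-X_t(\omega))(V_t(m)-V_t(\omega))$ remains harmlessly defined a.e.; the modulus-of-continuity bound from Lemma~\ref{lem:Phi:props} then legitimizes differentiation under the integral via dominated convergence. Finally, the semigroup property of $S_t$ on $(\Tscr_2,D_2)$ is inherited from Remark~\ref{rem:semig}, which supplies the semigroup structure of $(\bar{X},\bar{V}) \mapsto (X_t,V_t)$ on $\Kscr\times\Hscr_{\bar{X}}$, transported via the isometry with $\Tscr_2$.
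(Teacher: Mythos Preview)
Your argument is correct and follows the same overall architecture as the paper: establish the weak form of the auxiliary system \eqref{eq:EAalt} via pushforward, using that $\bar{\Psi}-\Psi_t \in \Hscr_{X_t}^\perp$ to replace $\Psi_t$ by the time-independent $\bar{\Psi}$, then deduce \eqref{eq:EA:mom} from \eqref{eq:EAalt:mom}; the treatment of the initial data and the semigroup property are identical.

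The one genuine difference is how you pass from \eqref{eq:EAalt:mom} to \eqref{eq:EA:mom}. The paper stays in Eulerian variables and performs the distributional computation $\del_t(\rho_t(\Phi\ast\rho_t)) + \del_x(\rho_t v_t(\Phi\ast\rho_t)) = -\rho_t(\phi\ast(\rho_t v_t)) + \rho_t v_t(\phi\ast\rho_t)$, deferring the justification of the product and convolution rules to \cite{leslie2023sticky}. You instead differentiate $H(t)=\int_\Omega(\Phi\ast\rho_t)(X_t)\,\varphi(X_t,t)\dee m$ directly in Lagrangian variables, and your observation that stickiness forces $V_t(m)=V_t(\omega)$ whenever $X_t(m)=X_t(\omega)$, so the singularity of $\phi$ on the diagonal is annihilated, is a clean way to see why the alignment term is well defined. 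The trade-off is that your dominated-convergence justification is only sketched: the modulus-of-continuity bound $|\Phi(x)-\Phi(y)|\le\Phi(|x-y|)$ controls increments of $\Phi$, but to dominate the difference quotients of $H$ uniformly in $h$ you still need a pointwise (or at least $L^1$) bound on $(X_{t+h}-X_t)/h$, not just the $L^2$-Lipschitz estimate for $t\mapsto X_t$; this is the same technical gap the paper sidesteps by citation. Either route is acceptable at this level of detail.
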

  \begin{proof}
    By Corollary \ref{cor:stickyness} there is \(v_t \in \Lp{2}(\R,\rho_t)\) such that \(V_t = v_t \circ X_t \) for all \(t \ge 0\).
    In turn, we can define \(\psi_t \in \Lp{2}(\R,\rho_t)\) as in \eqref{eq:psi}.
    We start by showing that \((\rho_t, \psi_t)\) is a distributional solution of \eqref{eq:EAalt}, and proceed as in the proof of \cite[Theorem 3.5]{brenier2013sticky}.
    Given \(\phiv \in C_{\mathrm{c}}^\infty([0,T]\times\R)\) we use \eqref{eq:push} to write
    \begin{align*}
      \int_{0}^{\infty}& \int_{\R} \left[ \del_t \phiv(t,x) \psi_t(t,x) + \del_x \phiv(t,x) v_t(t,x) \psi_t(t,x)\right]\dee\rho_t(x)\dee t \\
      &= \int_{0}^{\infty} \int_{\Omega} \left[ \del_t \phiv(t,X_t(m)) + \del_x \phiv(t,X_t(m)) V_t(m)\right] \Psi_t(m) \dee m\dee t \\
      &= \int_{0}^{\infty} \int_{\Omega} \left[ \diff{^+}{t} \phiv(t,X_t(m)) \right] \bar{\Psi}(m) \dee m\dee t = -\int_{0}^{\infty} \int_{\Omega} \varphi(t,X_t(m)) \diff{}{t} \bar{\Psi}(m) \dee m\dee t = 0,
    \end{align*}
    where we have used \(\bar{\Psi}-\Psi_t \in \del I_{\Kscr}(X_t) \subset \Hscr_{X_t}^\perp\).
    Therefore, the ``momentum'' equation \eqref{eq:EAalt:mom} is satisfied in distributions.
    A similar, even more straightforward, argument shows that the continuity equation \eqref{eq:EAalt:den}, which coincides with \eqref{eq:EA:den}, is satisfied.
    Next, as in the proof of \cite[Theorem 6.3]{leslie2023sticky}, we will make use of that \eqref{eq:EAalt} holds distributionally to show that the momentum equation \eqref{eq:EA:mom} holds in distributions.
    Indeed, we have
    \begin{align*}
      \del_t (\rho_t v_t) + \del_x(\rho_t v_t^2) &= \del_t (\rho_t \psi_t) + \del_x (\rho_t v_t \psi_t) - \del_t(\rho_t (\Phi\ast\rho_t)) - \del_x (\rho_t v_t (\Phi\ast\rho_t)) \\
      &= -(\Phi\ast\rho_t) [\del_t \rho_t + \del_x (\rho_t v_t)] -\rho_t [\del_t (\Phi\ast\rho_t) + v_t \del_x(\Phi\ast\rho_t)] \\
      &= \rho_t (\phi \ast (\rho_t v_t)) - \rho_t v_t (\phi \ast \rho_t).
    \end{align*}
    It remains to verify the initial conditions \eqref{eq:EA:init}.
    The first limit follows from the relation \(\rho_t = X_{t\#}\mathfrak{m}\) and the strong limit \(X_t \to \bar{X}\) in \(\Lp{2}(\Omega)\).
    For the second limit we need to show that
    \begin{equation*}
      \lim\limits_{t\to0+}\int_{\R}\phiv(x)v_t(x)\dee\rho_t(x) = \int_{\R}\phiv(x)\bar{v}(x)\dee\bar{\rho}(x) \quad \text{for every} \ \phiv \in C_\text{b}(\R),
    \end{equation*}
    where \(C_\text{b}(\R)\) is the space of continuous, bounded functions.
    However, this follows from \(\bar{V} = \bar{v}\circ\bar{X} \in \Hscr_{\bar{X}} \subset T_{\bar{X}}\Kscr\), \(V_t = v_t\circ X_t\) and \eqref{eq:V:rightcont}.
    The semigroup property is a consequence of the semigroup property of the sticky Lagrangian solution, cf.\ Remark \ref{rem:semig}.
  \end{proof}
  
  \subsection{An entropy solution of the scalar balance law}
  Let us now see how our notion of gradient flow solutions relate to the entropy solutions studied by Leslie and Tan \cite{leslie2023sticky}.
  
  \subsubsection{Recovering the balance law}
  As noticed in \cite{carrillo2023equiv}, a Lagrangian solution \((X_t, V_t)\) satisfies a conservation law of the form
  \begin{equation}\label{eq:claw}
    \del_t M_t + \del_x \Ucal(t,M_t) = 0,
  \end{equation}
  where the flux function \(\Ucal(t,m)\) is the primitive of the prescribed velocity \(U_t\) satisfying \(U_t - V_t \in \del I_{\Kscr}(X_t) \).
  Define the primitive of \(U[X_t]\) from \eqref{eq:U},
  \begin{equation*}
    \int_0^m U[X_t](\omega)\dee\omega = \int_0^m (\bar{\psi}\circ \bar{X})(\omega)\dee\omega - \int_0^m \int_\Omega \Phi(X_t(\omega)-X_t(\tilde{m}))\dee\tilde{m}\dee\omega \eqqcolon A(m) -S[X_t](m),
  \end{equation*}
  where
  \begin{equation}\label{eq:flux}
    A(m) \coloneqq \int_{0}^{m} \bar{\Psi}(\omega)\dee\omega
  \end{equation}
  corresponds to the flux function \(A\) in \eqref{eq:blaw}, as defined in \cite{leslie2023sticky}.
  The second term evaluated at \(m = M_t(x)\) can be written as
  \begin{equation*}
    S[X_t](M_t(x)) = \int_0^{M_t(x)} \int_\Omega \Phi(X_t(\omega)-y)\dee\rho_t(y)\dee\omega
  \end{equation*}
  Applying the Vol'pert \(BV\)-chain rule, see \cite[Lemma 4.2]{carrillo2023equiv}, and the fact that \(X_t(M_t(x)) = x\) for \(\rho_t\)-a.e.\ \(x\), with \(\rho_t = \del_x M_t\), we obtain
  \begin{equation*}
    \del_x S[X_t](M_t(x)) = \left(\int_\R \Phi(x-y)\dee\rho_t(y)\right)\del_x M_t = (\Phi\ast\rho_t)\del_x M_t
  \end{equation*}
  Combining the above with \eqref{eq:claw} and rearranging the source term \(\del_x S[X_t](M_t)\) we obtain exactly the balance law \eqref{eq:blaw}.
  Note that in \cite{leslie2023sticky} they work with a shifted flux function \(A \colon [-\frac12,\frac12] \to \R\),
  where the lower integral limit in \eqref{eq:flux} is \(-1/2\), since they instead work with the primitive \(\tilde{M}_t = M_t-\frac12\).
  The reason for this, in a sense, more symmetric choice of primitive is to justify the relation \(\Phi\ast\rho_t = \phi\ast M_t\) for their compactly supported \(\rho_t\).
  We will see below that under our assumptions, \(\Phi\ast\rho_t\) is a continuous and linearly bounded function, meaning \((\Phi\ast\rho_t)\rho_t\) is a well-defined measure.
  Hence we prefer to work with this quantity as it is.
  
  \subsubsection{Entropy solutions, the Rankine--Hugoniot and Ole\u{\i}nik E conditions}
  Let \(\eta \colon [0,1] \to \R\) be a Lipschitz and convex function, and suppose \(q \colon [0,1] \to \R\) satisfies \(q' = \eta' A'\).
  Then \((\eta,q)\) is called an entropy-entropy flux pair, and the entropy inequality associated with the balance law \eqref{eq:blaw} is
  \begin{equation}\label{eq:blaw:entropy}
    \del_t \eta(M_t) + \del_x q(M_t) \le (\Phi\ast\rho_t)\del_x\eta(M_t).
  \end{equation}
  
  \begin{dfn}
    A function \(M_t \colon [0,T]\times\R \to [0,1]\) is an entropy solution to \eqref{eq:blaw} if it is nondecreasing, \ and satisfies \eqref{eq:blaw:entropy} in the weak sense for every entropy-entropy flux pair.
  \end{dfn}
  By an approximation argument, one can equivalently require that the inequality holds with the Kru\v{z}kov entropy-entropy flux pair
  \begin{equation}\label{eq:Kpair}
    \eta_k(m) = \abs{m-k}, \qquad q_k(m) = \sgn(m-k)(A(m)-A(k)) = \int_k^m \sgn(\omega-k)\bar{\Psi}(\omega)\dee\omega
  \end{equation}
  for \(k\) in a dense subset of \(\R\), see \cite[Section 3]{golovaty2012existence}.
  We can then, as in \cite{leslie2023sticky}, derive the \textit{Rankine--Hugoniot} and \textit{Ole\u{\i}nik E} conditions.
  Assume \(M_t\) takes the values \(M_t^{-}\) and \(M^{+}_t \) on respectively the left- and right-hand sides of a shock curve \(\{(x,t) \colon x = \sigma(t)\}\) with shock velocity \(\dot{\sigma}(t)\).
  Then \eqref{eq:blaw:entropy} leads to the inequality
  \begin{equation*}
    (\dot{\sigma}(t) + \Phi\ast\rho_t(\sigma(t)))[[\eta(M_t)]] \ge [[q(M_t)]].
  \end{equation*}
  For the choice \((\eta,q)\) = \((\Id, A)\), \eqref{eq:blaw} shows that the above inequality holds as an identity, yielding the Rankine--Hugoniot condition along the shock curve,
  \begin{equation}\label{eq:RH}
    \dot{\sigma}(t) + \Phi\ast\rho_t(\sigma(t)) = \frac{A(M_t^+)-A(M_t^-)}{M_t^+-M_t^-}.
  \end{equation}
  On the other hand, the Kru\v{z}kov entropy-entropy flux pair \eqref{eq:Kpair} yields the Ole\u{\i}nik E condition, cf.\ \cite{oleinik1959uniqueness}, \cite[Section 8.4]{dafermos2016hyperbolic},
  \begin{equation}\label{eq:Oleinik}
    \frac{A(M_t^+)-A(k)}{M_t^+-k} \le \dot{\sigma}(t) + \Phi\ast\rho_t(\sigma(t)) \le \frac{A(k)-A(M_t^-)}{k-M_t^-}, \qquad k \in (M_t^-, M_t^+).
  \end{equation}
  Observe that such a shock would correspond to a maximal interval \((M_t^-, M_t^+) \subset \Omega_{X_t}\) for the corresponding Lagrangian solution \(X_t\), such that \((X_t(m), V_t(m)) = (\sigma(t),\dot{\sigma}(t))\) for \(m \in (M_t^-, M_t^+)\).
  From this point of view, the Rankine--Hugoniot condition \eqref{eq:RH} expresses exactly the identity \(\Psi_t = \Proj{\Hscr_{X_t}}\bar{\Psi}\) for these \(m\).
  Furthermore, by the characterization in Lemma \ref{lem:NX}, the Ole\u{\i}nik E condition \eqref{eq:Oleinik} is equivalent to \(\bar{\Psi}-\Psi_t \in \del I_{\Kscr}(X_t)\) for the same \(m\).
  Then, in analogy with how \(\Psi_t = \Proj{\Hscr_{X_t}}\bar{\Psi}\) can be deduced from \(\bar{\Psi}-\Psi_t \in \del I_{\Kscr}(X_t)\), \eqref{eq:RH} can be deduced from \eqref{eq:Oleinik}.
  We may also recall the inequality \eqref{eq:Oleinik:part} coming from the barycentric lemma in the particle setting; this is exactly \eqref{eq:Oleinik} for the piecewise linear interpolation of \(A\) with interpolation points given by \eqref{eq:thetai}, cf.\ \cite[Section 4]{leslie2023sticky}, \cite[Section 6]{carrillo2023equiv}.
  
  \subsubsection{Verifying the entropy admissibility}
  As mentioned in the introduction, in \cite{leslie2023sticky} the initial data is assumed to be \((\bar{\rho},\bar{v}) \in \Pscr_{\mathrm{c}}(\R)\times\Lp{\infty}(\R,\bar{\rho})\).
  In particular, this ensures that the flux function \(A\) is Lipschitz.
  Moreover, since the solution \((\rho_t,v_t)\) retains these properties, \(\Phi\ast\rho_t\) remains a bounded, continuous function for all \(t \in [0,T]\).
  In turn, the source term \((\Phi\ast\rho_t)\rho_t\) remains a well-defined measure for these times, and the distributional formulation of the balance law \eqref{eq:blaw} makes sense.
  
  We now show that the balance law, in particular the source term, also makes sense for our solution \((\rho_t,v_t) \in \Tscr_2\).
  By the pointwise linear bound on \(\Phi\) in Lemma \ref{lem:Phi:props} we have
  \begin{equation*}
    \abs{\Phi\ast\rho(x)} \le \int_\R \abs{\Phi(x-y)}\dee\rho(y) \le \Phi(1) \left( 1 + \abs{x} + \int_\R \abs{y}\dee\rho(y) \right),
  \end{equation*}
  which shows that the map \(f[\rho] \colon \Pscr_2(\R) \to \Mscr(\R)\) given by \(f[\rho] = (\Phi\ast\rho)\rho\) is pointwise linearly bounded, cf.\ \cite[Definition 6.1]{brenier2013sticky}.
  Moreover, by Lemma \ref{lem:PhiConv:unicont}, we even have that \(f[\rho]\) is uniformly continuous in the sense of \cite[Definition 6.2]{brenier2013sticky}.
  In particular, the source term in \eqref{eq:blaw} is well-defined, and its distributional formulation makes sense.
  
  \begin{thm}\label{thm:blawsol}
    Suppose \((\bar{\rho}, \bar{v}) \in \Tscr_2\), so that \(\bar{\psi} = \bar{v} + \Phi\ast\bar{\rho} \in \Lp{2}(\R,\bar{\rho})\).
    Define \(\bar{M}(x) = \bar{\rho}((-\infty,x])\) and the flux function \(A\).
    Let \(M_t\) be the generalized inverse of the corresponding Lagrangian solution \(X_t\) and recall \(\rho_t = X_{t \#}\mathfrak{m}\).
    Then \(M_t\) is an entropy solution of the scalar balance law \eqref{eq:blaw} with initial value \(\bar{M}\).
  \end{thm}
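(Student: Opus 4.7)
The strategy is to use the particle approximation framework from Section \ref{s:Lag}: approximate the initial data by step functions, verify the Kru\v{z}kov entropy inequality at the particle level via the Rankine--Hugoniot condition and the barycentric lemma, and pass to the limit. Let $(\bar{X}^n,\bar{V}^n) \in \Kscr_{\bs{m}}\times\Hscr_{\bs{m}}$ be step-function approximations with $(\bar{X}^n,\bar{V}^n) \to (\bar{X},\bar{V})$ in $\Lp{2}(\Omega)$, and let $(X^n_t,V^n_t)$ be the corresponding sticky particle Lagrangian solutions from Proposition \ref{prp:partsols}. Set $\rho^n_t \coloneqq X^n_{t\#}\mathfrak{m}$, $M^n_t(x) \coloneqq \rho^n_t((-\infty,x])$, and $A^n(m) \coloneqq \int_0^m \bar{\Psi}^n(\omega)\dee\omega$. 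Lemma \ref{lem:PhiConv:unicont} together with the strong $\Lp{2}$-convergence of $(\bar{X}^n,\bar{V}^n)$ yields $\bar{\Psi}^n \to \bar{\Psi}$ in $\Lp{2}(\Omega)$, and therefore $A^n \to A$ uniformly on $[0,1]$.

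At the particle level, $M^n_t$ is piecewise constant in $x$ with shocks located at the current (possibly clustered) particle positions, and $\rho^n_t$ is a sum of atoms so that the source $(\Phi\ast\rho^n_t)\rho^n_t$ is also concentrated there. At a shock spanning $(M^-,M^+) = (\theta_{i_*-1},\theta_{i^*})$ associated with a cluster $\Jcal = \{i_*,\dots,i^*\}$, the Rankine--Hugoniot identity \eqref{eq:RH} for flux $A^n$ reduces to the sticky collision rule \eqref{eq:RH:part}, which holds by construction. The Ole\u{\i}nik E condition \eqref{eq:Oleinik} at this shock for intermediate $k$ follows from the barycentric lemma \eqref{eq:Oleinik:part}, which gives the inequality when $k=\theta_j$, combined with the piecewise linearity of $A^n$ and the pre-collision monotonicity $\bar{\psi}_{i_*}\ge\dots\ge\bar{\psi}_{i^*}$ that renders the chord slopes monotone as $k$ varies within a cell. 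Between particle positions $M^n_t$ is locally constant and the balance law is trivial. Together these yield the Kru\v{z}kov entropy inequality for $M^n_t$ with flux $A^n$.

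I would then pass to the limit using Theorem \ref{thm:convergence}(a), which provides $X^n_t \to X_t$ in $\Lp{2}(\Omega)$ uniformly on $[0,T]$. This transfers to $M^n_t \to M_t$ in $\Lp{1}_{\mathrm{loc}}(\R)$ and $W_2(\rho^n_t,\rho_t) \to 0$; Lemma \ref{lem:PhiConv:unicont} upgrades this further to locally uniform convergence $\Phi\ast\rho^n_t \to \Phi\ast\rho_t$, while $A^n \to A$ uniformly in turn gives uniform convergence of the Kru\v{z}kov fluxes $q^n_k \to q_k$. The entropy and flux terms in the weak formulation thus pass to the limit by dominated convergence, and the initial data term by $\bar{M}^n \to \bar{M}$ in $\Lp{1}_{\mathrm{loc}}(\R)$ (immediate from $\bar{X}^n \to \bar{X}$ in $\Lp{2}(\Omega)$). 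To handle the source term $(\Phi\ast\rho^n_t)\del_x \eta_k(M^n_t) = (\Phi\ast\rho^n_t)\sgn(M^n_t-k)\rho^n_t$, which couples a continuous function to a measure against a discontinuous integrand, I would first test the entropy inequality against $C^2$ convex entropies $\eta$, for which $\del_x \eta(M^n_t) = \eta'(M^n_t)\rho^n_t$ pairs cleanly against the continuous function $\Phi\ast\rho^n_t$ by narrow convergence of $\rho^n_t$, and then recover the Kru\v{z}kov inequality via smooth convex approximation of $\eta_k$.

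The principal obstacle is exactly this last step: the source term combines a weakly convergent measure with an $\Lp{\infty}$ integrand that is discontinuous in the Kru\v{z}kov setting, so direct passage to the limit fails; the smoothing-then-density argument sketched above circumvents it. A secondary, essentially book-keeping, issue is the mismatch between the piecewise-linear particle flux $A^n$ and the continuum flux $A$, which is dispatched by the uniform convergence $A^n \to A$ together with the modulus of continuity of $A$ inherited from $\bar{\Psi} \in \Lp{2}(\Omega)$.
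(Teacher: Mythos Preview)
Your approach via particle approximation is valid but differs substantially from the paper's proof, which is direct and avoids any limit passage. The paper pulls the weak form of the Kru\v{z}kov inequality back to the mass variable via \eqref{eq:push} and the $BV$ chain rule, reducing everything to the sign of
\begin{equation*}
-\int_0^T\int_\Omega \phiv(t,X_t)\,\sgn(m-k)\bigl[\bar\Psi-\Psi_t\bigr]\dee m\,\dee t.
\end{equation*}
Since $\phiv(t,X_t)\sgn(m-k)\in T_{X_t}\Kscr$ by the characterization \eqref{eq:TK2} (it is nondecreasing on each maximal interval of $\Omega_{X_t}$) and $\bar\Psi-\Psi_t\in\del I_\Kscr(X_t)=N_{X_t}\Kscr$ from the differential inclusion, the polar relation \eqref{eq:TK1} gives the sign immediately. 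This exploits the convex-analytic structure of the Lagrangian solution in one stroke; your route is closer in spirit to \cite{bonaschi2015equivalence}, is what Theorem~\ref{thm:convergence} is set up to support, and works, but requires the additional limit machinery you outline for the source term.

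One correction at the particle level: your claimed ``pre-collision monotonicity $\bar\psi_{i_*}\ge\dots\ge\bar\psi_{i^*}$'' is false in general. What is ordered is $\psi_j(t-)$ just before the collision, not the initial $\bar\psi_j$; in a cluster formed by sequential collisions the $\bar\psi_j$ can be scrambled. You do not actually need this: the barycentric lemma \eqref{eq:Oleinik:part} already asserts that $A^n$ lies above the chord at every breakpoint $\theta_j$ of the cluster, and piecewise linearity of $A^n$ then extends this to all intermediate $k$ without any slope-monotonicity argument. Cleaner still is to invoke Proposition~\ref{prp:partsols} directly, which gives $\bar\Psi^n-\Psi^n_t\in\del I_\Kscr(X^n_t)$ for every $t\ge0$; by Lemma~\ref{lem:NX} this is exactly the Ole\u{\i}nik~E condition at every shock and every time, not only at collision instants.
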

  
  \begin{proof}
    The initial data is clear from \(\bar{M}\) being the generalized inverse of \(X_0 = \bar{X}\).
    The fact that \(M_t\) satisfies \eqref{eq:blaw} in the weak sense follows from that \((\rho_t,v_t)\) satisfies the continuity equation \eqref{eq:EA:den} and integration by parts, using once more
    \begin{equation*}
      \bar{\psi}\circ \bar{X}-\psi_t\circ X_t = \bar{\Psi}-\Psi_t  \in \del I_{\Kscr}(X_t) \subset \Hscr_{X_t}^\perp.
    \end{equation*}
    In fact, for this part we only use that \(\bar{\Psi}-\Psi_t \in \Hscr_{X_t}^\perp\), equivalent to the Rankine--Hugoniot condition.
    On the other hand, to show that the entropy inequality \eqref{eq:blaw:entropy} holds, we need \(\bar{\Psi}-\Psi_t \in \del I_{\Kscr}(X_t)\), equivalent to the Ole\u{\i}nik E condition.
    
    Let \(M(x) = \rho((-\infty,x])\) for some \(\rho \in \Pscr_2(\R)\), such that the distributional derivative \(\del_x M = \rho\).
    Then for \(f \in H^1(\R)\), the distributional derivative of \(f\circ M \in BV(\R)\) is \(\del_x f(M) = f'_{M}\rho\) for an \(f'_{M} \in \Lp{2}(\R,\rho)\) given by the \(BV\)-chain rule, cf.\ \cite[Lemma 4.2]{carrillo2023equiv}.
    Following \cite[Section 5]{carrillo2023equiv}, for a.e.\ \(m \in \Omega\) we have
    \(\eta_{k,M}'(X_t) = \Proj{\Hscr_{X_t}}(\sgn(m-k))\), \(A_{M}'(X_t) = \Proj{\Hscr_{X_t}}\bar{\Psi} = \Psi_t\), and \(q_{k,M}'(X_t) = \Proj{\Hscr_{X_t}}(\sgn(m-k)\bar{\Psi})\).
    Let \(0 \le \phiv \in C^\infty_{\mathrm{c}}([0,T]\times\R)\) be a smooth test function with compact support; we use this to show that \eqref{eq:blaw:entropy} holds by integrating by parts and using the push-forward relation \eqref{eq:push}. That is,
    using that \(M_t\) is a weak solution, we compute
    \begin{align*}
      &\int_0^T \left( \int_{\R} \left[\eta_k(M_t)\del_t\phiv(t,x) + q_{k}(M_t)\del_x\phiv(t,x)\right]\dee x + \int_{\R}\phiv(t,x)\eta'_{k,M}(M_t)(\Phi\ast\rho_t)\dee\rho_t \right)\dee t  \\
      &\quad= -\int_0^T \int_{\R} \phiv(t,x) \left[q_{k,M}'(t,x) - \eta_{k,M}'(t,x) A_{M}'(t,x) \right]\dee\rho_t\dee t \\
      &\quad= -\int_0^T \int_{\Omega} \phiv(t,X_t) \left[q_{k,M}'(X_t) - \eta_{k,M}'(X_t)A'_{M}(X_t)\right]\dee m\dee t \\
      &\quad= -\int_0^T \int_{\Omega} \phiv(t,X_t) \sgn(m-k) \left[\bar{\Psi} - \Psi_t\right]\dee m\dee t \ge 0,
    \end{align*}
    where the final inequality follows from \eqref{eq:TK1}.
    Indeed, since \(\phiv \ge 0\) it follows from \eqref{eq:TK2} that \(\phiv(t,X_t)\sgn(m-k) \in T_{X_t}\Kscr\) for any time \(t \ge 0\) and \(k \in \R\).
  \end{proof}
  
  \subsubsection*{Possible uniqueness of entropy solutions}
  In \cite{leslie2023sticky} the authors extend the Kru\v{z}kov doubling-of-variables argument to account for the additional source term, thereby proving that entropy solutions for the balance law \eqref{eq:blaw} with initial data \((\bar{\rho},\bar{v}) \in \Pscr_{\mathrm{c}}(\R)\times\Lp{\infty}(\R,\bar{\rho})\) are unique, relying on the fact that the measure remains compactly supported, and that \(\bar{v}\) is essentially bounded, meaning the flux function is Lipschitz.
  
  In our case, the solutions belong to the space \(\Tscr_2\), and the flux function is continuous, but not Lipschitz.
  However, uniqueness of entropy solutions has been proved in \cite{golovaty2012existence} for \textit{conservation} laws with the type of flux function we have here.
  This suggests that the same may be true for balance laws of the form \eqref{eq:blaw}, most likely with some requirements for the source term.
  However, we deem the investigation of this possibility to be outside the scope of our current study.
  
  \section{Clustering formation for sticky dynamics}\label{s:cluster}
  In their follow-up paper \cite{leslie2024finite}, the authors of \cite{leslie2023sticky} derive results on the finite- and infinite-time clustering of their sticky-particle solutions of \eqref{eq:EA}.
  Here clusters refer to the connected components, or maximal intervals, of \(\Omega_{X_t}\).
  Their analysis is based on the flux function \(A\) for the balance law \eqref{eq:blaw} and its lower convex envelope \(A^{**}\).
  It is perhaps not surprising that the flux function \(A\), which encodes the continuum natural velocity \(\bar{\Psi}\), determines the clustering behavior of the Euler-alignment system;
  in \cite{ha2018first} it is shown that for the Cucker--Smale particle dynamics and initial positions \(\bar{\bs{x}} \in \K^N\), there can be finite-time collisions only if the natural velocities are not ordered, i.e., \(\bar{\bs{\psi}} \notin \K^N\), where we recall \eqref{eq:psi:part} and \eqref{eq:cone:part}.
  In our continuum case, recalling \(\Proj{\Kscr}\bar{\Psi} = \diff{^+}{m}A^{**}\) from Proposition \ref{prp:PK}, we observe that any deviation of \(A\) from \(A^{**}\) means that \(\bar{\Psi}\) is not nondecreasing in this region, hence \(\bar{\Psi} \notin \Kscr\), the convex cone from \eqref{eq:K}.
  Indeed, what is called the \textit{supercritical} region \(\Sigma_-\) in \cite{leslie2024finite}, which for our definition \eqref{eq:flux} of \(A\) becomes 
  \begin{equation*}
    \Sigma_- \coloneqq \left\{ m \in \Omega \colon A(m) > A^{**}(m) \right\},
  \end{equation*}
  is exactly where finite-time clustering happens.
  Similarly, their \textit{subcritical} region \(\Sigma_+\)
  is for us
  \begin{equation*}
    \Sigma_+ \coloneqq \left\{ m \in [0,1) \colon A^{**} \text{ is not linear on any interval } [m,m') \right\},
  \end{equation*}
  while the \textit{critical} region \(\Sigma_0\) becomes
  \begin{equation*}
    \Sigma_0 \coloneqq \left\{ \bigcup [m',m'') \colon A^{**} \text{ is linear and equal to } A \text{ on } [m',m'')\right\}.
  \end{equation*}
  Since \(A^{**}\) is convex, \(\diff{^+}{m}A^{**}\) exists for every \(m \in \Omega\) and is a nondecreasing, right-continuous function.
  Therefore, its range \(\ran(\diff{^+}{m}A^{**})\) defines an ordering on \(\Omega\), dividing it into sets of nonincreasing averaged natural velocities.
  Suppose \(m \in \Omega\), and write \(\psi = \Proj{\Kscr}\bar{\Psi}(m)  \in \ran(\diff{^+}{m}A^{**})\).
  Then we write \(L(m) = (\Proj{\Kscr}\bar{\Psi})^{-1}(\psi)\), that is, \(L(m)\) is the preimage of \(\psi\).
  This set then turns out to be analogous to the set \(L(m)\) defined in \cite{leslie2024finite}, where it for \(m \notin \Sigma_+\) is defined as the maximal half-open interval containing \(m\) on which \(A^{**}\) is linear, and \(\{m\}\) otherwise.
  Figure \ref{fig:flux} provides an illustration of these subsets for a generic flux function \(A\).
  As shown in \cite{leslie2024finite}, the sets \(L(m)\) play a central role in the asymptotic behavior.
  
  \begin{figure}
    \includegraphics[width=0.8\linewidth]{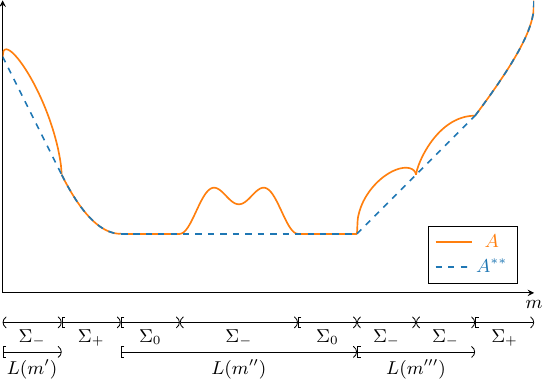}
    \caption{A flux function \(A\) and its lower convex envelope \(A^{**}\). The non-singleton subgroups \(L(m)\) are shown together with the regions \(\Sigma_+\), \(\Sigma_0\) and \(\Sigma_-\).}
    \label{fig:flux}
  \end{figure}
  
  \subsection{Auxiliary estimates}
  From the characterization in Lemma \ref{lem:NX} we know that an element \(\xi \in \del I_{\Kscr}(X)\) can only be nonzero in \(\Omega_{X}\).
  Then for a.e.\ \(m \in \Omega\setminus\Omega_{X_t}\), the differential inclusion \eqref{eq:EA:DI} will be a differential equation since \(\dot{X}_t(m) = U[X_t](m)\).
  In fact, by \eqref{eq:V:PTXU}, the Lagrangian solution satisfies the following differential equation for all \(t \ge 0\),
  \begin{equation}\label{eq:EA:DE}
    \diff{^+}{t}X_t = \Proj{T_{X_t}\Kscr}\bar{\Psi} - \int_{\Omega} \Phi(X_t-X_t(\omega))\dee\omega.
  \end{equation}
  Since our solutions are globally sticky, we have \(\Proj{T_{X_t}\Kscr}\bar{\Psi} = \Proj{\Hscr_{X_t}}\bar{\Psi}\), and by \eqref{eq:TK2} and \eqref{eq:sticky:OX}
  \begin{equation*}
    \Kscr \subset T_{X_t}\Kscr \subset T_{X_s}\Kscr \quad \text{for} \ 0 \le s \le t.
  \end{equation*}
  In particular, since \(X_0 = \bar{X}\) and \(V_0 = \bar{V} \in \Hscr_{\bar{X}}\), \(X_t\) initially evolves according to the continuum version of \eqref{eq:EA:part:DE}.
  Consider \([\alpha_i,\beta_i) \subset \Omega\) for \(i = 1,2\), where \(\beta_1 \le \alpha_2\) such that the intervals do not overlap.
  Then we introduce the averaged quantities
  \begin{equation}\label{eq:Yavg}
    Y^i_t = \dint_{\alpha_i}^{\beta_i} X_t(\omega)\dee\omega, \qquad \Gamma^i_t \coloneqq \dint_{\alpha_i}^{\beta_i} \Proj{T_{X_t}\Kscr}\bar{\Psi}(\omega)\dee \omega,
  \end{equation}
  where \(Y_t^1 \le Y_t^2\) follows from \(X_t \in \Kscr\).
  Then we can derive the following inequalities from \eqref{eq:EA:DE},
  \begin{subequations}\label{eq:EA:DE:LU}
    \begin{align}
      \diff{^+}{t} \left(Y^2_t-Y^1_t\right) & \ge \Gamma^2_t-\Gamma^1_t  - 2\Phi\left(\frac12 \left(Y^2_t-Y^1_t\right)\right), \label{eq:EA:DE:L} \\
      \diff{^+}{t} \left(Y^2_t-Y^1_t\right) & \le \Gamma^2_t-\Gamma^1_t. \label{eq:EA:DE:U}
    \end{align}
  \end{subequations}
  The inequality \eqref{eq:EA:DE:U} follows easily from \(X_t \in \Kscr\) and \(\Phi\) being a nondecreasing function.
  Indeed, we have \(J_t \ge 0\), where
  \begin{equation*}
    J_t \coloneqq \dint_{\alpha_2}^{\beta_2}\int_{\Omega}\Phi(X_t(m)-X_t(\omega))\dee\omega\dee m-\dint_{\alpha_1}^{\beta_1}\int_{\Omega}\Phi(X_t(m)-X_t(\omega))\dee\omega\dee m.
  \end{equation*}
  In order to derive \eqref{eq:EA:DE:L}, we estimate \(J_t\) from above as in \eqref{eq:convbnd},
  \begin{align*}
    J_t &= \dint_{\alpha_2}^{\beta_2} \dint_{\alpha_1}^{\beta_1} \int_{\Omega} \int_{X_t(\tilde{m})}^{X_t(m)} \phi(y-X_t(\omega))\dee y \dee\omega \dee\tilde{m}\dee m \\
    &\le \dint_{\alpha_2}^{\beta_2} \dint_{\alpha_1}^{\beta_1} \int_{X_t(\tilde{m})}^{X_t(m)} \phi\left(y-\frac{X_t(m)+X_t(\tilde{m})}{2}\right)\dee y \dee\tilde{m}\dee m \\
    &= 2\dint_{\alpha_2}^{\beta_2} \dint_{\alpha_1}^{\beta_1} \Phi\left(\frac{X_t(m)-X_t(\tilde{m})}{2}\right)\dee\tilde{m}\dee m \le 2 \Phi\left(\frac{Y_t^2-Y_t^1}{2}\right),
  \end{align*}
  where the first inequality follows from \(\phi(x) = \phi(\abs{x})\) being radially nondecreasing, and the second inequality is a consequence of \(\Phi(x)\) being concave for \(x \ge 0\) and Jensen's inequality.
  Observe that the inequalities \eqref{eq:EA:DE:LU} still holds if we replace one or both of the averaged quantities with \(X_t\) in a point.
  
  Although \(\Gamma^2_t -\Gamma_t^1\) in \eqref{eq:EA:DE:LU} depends on time, we will use their relation with \(A\) and \(A^{**}\) to appropriately bound them with time-independent quantities, so that the following result, comparable with \cite[Lemma 3.4]{leslie2024finite}, is applicable.
  
  \begin{lem}\label{lem:dichotomy}
    Suppose \(X_t \colon [0,\infty) \to \Kscr\), with \(Y_t^{i}\) and \(\Gamma^{i}_t\) for \(i \in \{1,2\}\) as in \eqref{eq:Yavg}.
    We consider the following cases:
    \begin{enumerate}[label={\upshape\Roman*.}]
      \item If \(\Gamma^2_t-\Gamma^1_t \ge 2\sigma > 0\), then there is \(\eta > 0\) depending on \(\Phi\) and \(\sigma\) such that
      \begin{equation*}
        Y_t^2-Y_t^1 \ge \min\left\{Y_0^2-Y_0^1 + \sigma t, \eta\right\} \ge \min\left\{Y_0^2-Y_0^1, \sigma t, \eta\right\}.
      \end{equation*}
      \item If \(\Gamma^2_t-\Gamma^1_t \le -\sigma < 0\), then there exists some time \(\tau\), \(\tau \le (Y_0^2-Y_0^1)/\sigma\), such that \(Y_\tau^2 = Y_\tau^1\).
    \end{enumerate}
  \end{lem}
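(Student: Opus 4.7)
The plan is to set $D_t \coloneqq Y_t^2 - Y_t^1 \ge 0$ (nonnegativity is automatic since $X_t \in \Kscr$ and $\beta_1 \le \alpha_2$) and argue directly from the differential inequalities \eqref{eq:EA:DE:LU}, exploiting that $\Phi$ is continuous, nondecreasing, and vanishes at $0$.

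For Case~I, the hypothesis $\Gamma_t^2 - \Gamma_t^1 \ge 2\sigma$ combined with \eqref{eq:EA:DE:L} gives
\begin{equation*}
  \diff{^+}{t} D_t \ge 2\sigma - 2\Phi(D_t/2).
\end{equation*}
I would pick $\eta > 0$ small enough that $2\Phi(\eta/2) \le \sigma$; such an $\eta$ exists by continuity of $\Phi$ with $\Phi(0) = 0$, and if $\Phi$ never reaches $\sigma/2$ then any $\eta$ works and the resulting bound is only stronger. On the sublevel set $\{D_t \le \eta\}$ the inequality above forces $\diff{^+}{t} D_t \ge \sigma$, so starting from $D_0$ the quantity $D_t$ grows at rate at least $\sigma$ as long as it stays below $\eta$. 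To handle the regime once $D_t$ meets $\eta$, I would use a barrier argument: at any $t^*$ with $D_{t^*} = \eta$ we still have $\diff{^+}{t} D_t\vert_{t=t^*} \ge \sigma > 0$, so by continuity of $D_t$ it cannot cross $\eta$ from above. Combining the two regimes yields $D_t \ge \min\{D_0 + \sigma t, \eta\}$, and the second inequality in~I follows from the elementary bound $D_0 + \sigma t \ge \min\{D_0, \sigma t\}$.

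For Case~II, the upper bound \eqref{eq:EA:DE:U} together with the hypothesis gives $\diff{^+}{t} D_t \le -\sigma$ as long as $D_t > 0$. Because $D_t \ge 0$ is preserved throughout (from $X_t \in \Kscr$), integrating this right-derivative inequality shows that $D_t$ must reach zero at some time $\tau \le D_0/\sigma$, and at that time $Y_\tau^1 = Y_\tau^2$.

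The main technical point I anticipate is making the barrier argument of Case~I rigorous in our non-smooth setting: $D_t$ is absolutely continuous but only right-differentiable, and the comparison is with a differential inequality rather than an equation. The cleanest way to proceed is to invoke a comparison principle for scalar differential inequalities of the form $\diff{^+}{t} D_t \ge g(D_t)$ with $g$ continuous, applied to $g(s) = \min\{\sigma, 2\sigma - 2\Phi(s/2)\}$, and then observe that $z_t \coloneqq \min\{D_0 + \sigma t, \eta\}$ is a subsolution of the corresponding scalar ODE, so that $D_t \ge z_t$ by a standard Gr\"onwall-type argument.
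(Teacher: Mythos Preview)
Your proposal is correct and follows essentially the same approach as the paper: choose \(\eta\) so that \(2\Phi(\eta/2)\le\sigma\), split into the regimes \(D_t<\eta\) (where the right-derivative is \(\ge\sigma\)) and \(D_t\ge\eta\), and integrate; Case~II is handled identically by integrating \eqref{eq:EA:DE:U}. The paper's own proof is actually terser and leaves the barrier/comparison step implicit, so your added discussion of how to make that step rigorous is more detailed than what the paper provides, but not a different method.
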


  \begin{proof}
    \textit{Case I:} Let \(\eta > 0\) be such that \(2\Phi(\frac12\abs{x}) \le \sigma\) whenever \(\abs{x} \le \eta\), which is always possible, cf.\ Lemma \ref{lem:Phi:props}.
    In particular, if \(\Phi\) is invertible, we can choose \(\eta =  2\Phi^{-1}(\frac12\sigma)\).
    Now, either \(Y_t^2-Y_t^1 \ge \eta\), or \(Y_t^2-Y_t^1 < \eta\).
    In the latter case, the right-hand side of \eqref{eq:EA:DE:L} is greater than or equal to \(\sigma\).
    Therefore, we may integrate the leftmost inequality of \eqref{eq:EA:DE:L} to obtain the result.
    
    \textit{Case II:} This follows from integration of \eqref{eq:EA:DE:U}.
  \end{proof}
  
  \subsection{Clustering from the Lagrangian solutions}
  Note that \(\Sigma_-\), like \(\Omega_{X_t}\), is an open set in \(\Omega\), and so can be written as a countable union of disjoint open intervals.
  Let us consider one such maximal interval \((m_-, m_+) \subset \Sigma_-\).
  Note that if \(\Omega_{\bar{X}} \cap \Sigma_- \neq \emptyset\), then any maximal interval \((\alpha,\beta) \subset \Omega_{\bar{X}}\) intersecting \((m_-,m_+)\) must necessarily be contained in \((m_-,m_+)\).
  If not, \(\alpha < m_+ < \beta\), say, we have
  \begin{equation*}
    \frac{A(m_+)-A(m_-)}{m_+-m_-} > \frac{A(m_+)-A(m)}{m_+-m}, \quad m_- < m < m_+,
  \end{equation*}
  by definition of \(\Sigma_-\), and in particular this holds for \(m \in (\alpha,m_+)\).
  However, since \(\bar{\Psi} \in \Hscr_{\bar{X}}\), its slope must be constant on \((\alpha,\beta)\), and so the previous inequality implies \(A^{**}(m_+) = A(m_+) > A(\beta) \ge A^{**}(\beta) \), such that
  \begin{equation*}
    \frac{A^{**}(m_+)-A^{**}(m_-)}{m_+-m_-} > \frac{A^{**}(\beta)-A^{**}(m_-)}{\beta-m_-}.
  \end{equation*}
  This contradicts \(A^{**}\) being convex, cf.\ \cite[Lemma 2.2]{leslie2024finite}.
  A similar argument holds for the case \(\alpha < m_- < \beta\).
  This means that if \(m' < m''\) and \(L(m') \neq L(m'')\), we must have \(\bar{X}(m') < \bar{X}(m'')\), which is also the content of \cite[Lemma 3.5]{leslie2024finite}.
  
  Based on our Lagrangian solutions, we now present a result corresponding to \cite[Theorem 1.7]{leslie2024finite}, where steps of our proof are very much inspired by theirs.
  \begin{thm}\label{thm:clustering}
    Suppose \(m', m'' \in \Omega\). If \(\Proj{\Kscr}\bar{\Psi}(m') < \Proj{\Kscr}\bar{\Psi}(m'')\), then there is a time-independent constant \(c > 0\) such that \(X_t(m'')-X_t(m') \ge c > 0\) for all \(t \ge 0\).
    Furthermore, we have the following cases for \(m \in \Omega\).
    \begin{enumerate}[label={\upshape\Roman*}.]
      \item If \(m \in \Sigma_+\), there is no finite- or infinite-time clustering at \(m\).
      \item If \(m \in \Sigma_-\), there is a finite-time cluster at \(m\) for sufficiently large \(t \ge 0\).
      \item \begin{enumerate}[label={\upshape(\roman*)}]
        \item Suppose \(\int_0^1 \frac{1}{\Phi(x)}\dee x = \infty \), including the case \(\phi \equiv 0\). If \([m',m'') \ni m\) is a finite-time cluster, then either \(m \in \Sigma_0\) and \([m',m'')\) is an initial cluster, or \(m \in \Sigma_-\) and \([m',m'') \subset [m_-,m_+)\), where \((m_-,m_+)\) is a connected component of \(\Sigma_-\).
        No other finite-time clusters are possible.
        \item Suppose \(\phi\) is not identically zero.
        If \(m \notin \Sigma_+\) and \(\left\{\bar{X}(\omega) \colon \omega \in L(m) \right\}\) is bounded, there is an infinite-time cluster at \(m\), given by \(L(m)\).
        \item Suppose in the previous case (ii) that \(\int_0^1 \frac{1}{\Phi(x)}\dee x < \infty \). Then \(L(m)\) is a finite-time cluster.
      \end{enumerate}
    \end{enumerate}
  \end{thm}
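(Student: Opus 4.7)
The proof assembles the various clustering conclusions from Lemma~\ref{lem:dichotomy} applied to the averaged Lagrangian quantities $Y^i_t$ and $\Gamma^i_t$ in \eqref{eq:Yavg}, with the key input being the comparison between $\Proj{\Kscr}\bar{\Psi} = \diff{^+}{m}A^{**}$ and the time-dependent $\Psi_t = \Proj{T_{X_t}\Kscr}\bar{\Psi}$. The central ingredient is the continuum Ole\u{\i}nik E / barycentric identity: on every maximal interval $(\alpha',\beta')\subset \Omega_{X_t}$ of the sticky solution, $\Psi_t$ is the secant slope $\bigl(A(\beta')-A(\alpha')\bigr)/(\beta'-\alpha')$ of $A$, and this slope sits above the secant slope of $A^{**}$ on the same interval.

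For the separation claim, assume $\Proj{\Kscr}\bar{\Psi}(m')<\Proj{\Kscr}\bar{\Psi}(m'')$. Since $\Proj{\Kscr}\bar{\Psi}$ is nondecreasing and right-continuous, I would pick $\psi^\star$ strictly between these two values and intervals $I_1=[\alpha_1,\beta_1]\ni m'$, $I_2=[\alpha_2,\beta_2]\ni m''$ with $\beta_1\le\alpha_2$ such that $\dint_{I_1}\Proj{\Kscr}\bar{\Psi}\le\psi^\star-2\sigma$ and $\dint_{I_2}\Proj{\Kscr}\bar{\Psi}\ge\psi^\star+2\sigma$ for some $\sigma>0$. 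The hard part is to transfer these averages to $\Psi_t$ uniformly in $t$: when a cluster $(\alpha',\beta')\subset\Omega_{X_t}$ intersects $I_i$, the barycentric lemma and convexity of $A^{**}$ give $\dint_{(\alpha',\beta')\cap I_i}\Psi_t \ge \dint_{(\alpha',\beta')\cap I_i}\Proj{\Kscr}\bar{\Psi}$ on the ``right" side and the reverse on the ``left" side (after a small shrinking of $I_1,I_2$ to avoid boundary artefacts where $\Proj{\Kscr}\bar{\Psi}$ jumps through $\psi^\star$). This yields $\Gamma^2_t-\Gamma^1_t\ge 2\sigma$ for all $t\ge0$, so Case I of Lemma~\ref{lem:dichotomy} applied to $Y^1_t,Y^2_t$ gives $Y^2_t-Y^1_t\ge c>0$ uniformly, and monotonicity of $X_t\in\Kscr$ then yields $X_t(m'')-X_t(m')\ge c$.

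Case I ($m\in\Sigma_+$) is then immediate: strict convexity of $A^{**}$ near $m$ means $\Proj{\Kscr}\bar{\Psi}$ is strictly increasing at $m$, so pairs $m'<m<m''$ satisfy the hypothesis of the separation claim and yield $c(m',m'')>0$, precluding both finite- and infinite-time coalescence through $m$. Case II ($m\in\Sigma_-$) uses Case II of the dichotomy: on the component $(m_-,m_+)\subset\Sigma_-$ containing $m$, the strict inequality $A>A^{**}$ forces $\bar{\Psi}$ to take values above $\Proj{\Kscr}\bar{\Psi}(m_-^+)$ on some subinterval left of $m$ and below it on some subinterval right of $m$. Choosing $I_1,I_2$ accordingly gives $\dint_{I_2}\bar{\Psi}-\dint_{I_1}\bar{\Psi}\le-2\sigma<0$; since initially $\Omega_{\bar X}$ is disjoint from the relevant subintervals (otherwise the initial cluster would already cover $m$) and stickiness is monotone in $\Omega_{X_t}$, one checks $\Gamma^2_t-\Gamma^1_t\le -\sigma$ until the first time $Y^1_\tau=Y^2_\tau$, which by the dichotomy occurs at some $\tau<\infty$.

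Case III combines these ingredients with a direct ODE analysis of \eqref{eq:EA:DE:L}--\eqref{eq:EA:DE:U}. Part (i): a finite-time cluster $[m',m'')$ corresponds to $Y^2_t-Y^1_t=0$ at finite $t$; by the dichotomy this requires $\Gamma^2_t-\Gamma^1_t$ to become negative somewhere along the way, which, combined with the preservation of separation from Part~1 applied to pairs with unequal $\Proj{\Kscr}\bar{\Psi}$, forces $[m',m'')$ to lie in a single component of $\Sigma_-$ or to be an initial cluster in $\Sigma_0$; the divergence $\int_0^1 1/\Phi=\infty$ is used to rule out spontaneous (non-initial) coalescence on $\Sigma_0$ via the integral test applied to $\diff{^+}{t}(Y^2_t-Y^1_t)\ge -2\Phi\bigl(\tfrac12(Y^2_t-Y^1_t)\bigr)$. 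Parts (ii)--(iii): on $L(m)$, $\Proj{\Kscr}\bar{\Psi}$ is constant, so Part~1 does \emph{not} forbid clustering; with $\{\bar X(\omega):\omega\in L(m)\}$ bounded, $Y^2_t-Y^1_t$ stays bounded, and \eqref{eq:EA:DE:U} combined with $\Gamma^2_t-\Gamma^1_t\le 0$ (a direct consequence of $\Psi_t$ being the projection onto $T_{X_t}\Kscr$ of $\bar{\Psi}$, which has zero average difference on $L(m)$) forces $Y^2_t-Y^1_t\to 0$; if additionally $\int_0^1 1/\Phi<\infty$, \eqref{eq:EA:DE:L} rewrites as $\diff{^+}{t}(Y^2_t-Y^1_t)\le -2\Phi(\tfrac12(Y^2_t-Y^1_t))$, and integration of $\dee y/\Phi(y/2)$ gives collapse at a finite time.

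The chief technical obstacle throughout is the step transferring averaged inequalities from $\Proj{\Kscr}\bar{\Psi}$ to $\Psi_t=\Proj{T_{X_t}\Kscr}\bar{\Psi}$ uniformly in $t$: this requires a careful case analysis of how the maximal intervals of $\Omega_{X_t}$ can split or straddle the chosen $I_1,I_2$, relying on the Ole\u{\i}nik/barycentric inequality and the observation from Section~\ref{s:aux} that these projections preserve integrals.
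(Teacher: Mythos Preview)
Your overall architecture is right, but there is a genuine gap in Case~III(ii)--(iii), and your separation argument is harder to close than the paper's.

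\medskip
\textbf{Case III(ii)--(iii): the decay mechanism is wrong.} From \eqref{eq:EA:DE:U} with $\Gamma^2_t-\Gamma^1_t\le 0$ you only get that $Y^2_t-Y^1_t$ is nonincreasing; this does \emph{not} force $Y^2_t-Y^1_t\to 0$. And for (iii) you write that ``\eqref{eq:EA:DE:L} rewrites as $\diff{^+}{t}(Y^2_t-Y^1_t)\le -2\Phi(\tfrac12(Y^2_t-Y^1_t))$'', but \eqref{eq:EA:DE:L} is a \emph{lower} bound; it cannot be turned into an upper bound of that form. The paper's argument does not use the generic inequalities \eqref{eq:EA:DE:LU} here at all. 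Instead, writing $L(m)=[m_-,m_+)$ and $l(t)=X_t(m_+-)-X_t(m_-)$, it restricts the convolution integral to $[m_-,m_+)$ and uses oddness and subadditivity of $\Phi$ to get the sharper lower bound
\[
\Phi\ast\rho_t(X_t(m_+-))-\Phi\ast\rho_t(X_t(m_-))\;\ge\;(m_+-m_-)\,\Phi\bigl(l(t)\bigr),
\]
which combined with $\Psi_t(m_+-)\le\psi\le\Psi_t(m_-)$ yields $\diff{^+}{t}l(t)\le -(m_+-m_-)\Phi(l(t))$. This is what drives $l(t)\to 0$ in (ii) and gives finite-time collapse in (iii) via $\int_{l(t)}^{l(0)}\Phi(y)^{-1}\dee y\ge (m_+-m_-)t$. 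The factor $(m_+-m_-)$, i.e.\ the total mass of the subgroup, is essential and is absent from \eqref{eq:EA:DE:LU}.

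\medskip
\textbf{Separation claim.} Your plan to transfer averaged inequalities from $\Proj{\Kscr}\bar\Psi$ to $\Psi_t$ ``uniformly in $t$'' by analyzing how clusters straddle fixed test intervals $I_1,I_2$ is delicate: a cluster of $\Omega_{X_t}$ can extend well outside $I_i$, and the barycentric inequality controls the value of $\Psi_t$ on the whole cluster, not its average over $I_i$. The paper sidesteps this by a contradiction argument with a structural observation: assuming a first collision time $\tau$ between $L(m'')$ and mass to its left, on $[0,\tau)$ the point $m_-''=\min L(m'')$ can only have merged with mass to its \emph{right}, so $A(m_-'')=A^{**}(m_-'')$ and $A\ge A^{**}$ give $\Psi_t(m_-'')\ge\psi''$ directly (a pointwise bound, no averaging needed). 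The leftward block is handled by averaging over $[m',m_-'')$, where merging can only be leftward and hence only decreases the average. This one-sided-collision observation is the key idea you are missing; it also drives Case~II, where the paper averages over the \emph{outer} intervals $[m_-,m_1]$ and $[m_2,m_+]$ of the supercritical component rather than small neighborhoods of $m$.
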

  \begin{proof}
    Case I is a direct consequence of the first part of the theorem, which we prove next.
    Consider \(m'' \in \Omega\) and write \(\psi'' = \Proj{\Kscr}\bar{\Psi}(m'')\).
    Then \(L(m'')\) may be a singleton \(\{m''\}\) or an interval \([m_-'', m_+'')\), and we write \(m_-'' = \min L(m'')\).
    For any \(m' \in \Omega\) with \(\psi' = \Proj{\Kscr}\bar{\Psi}(m') < \psi''\) we must then have \(L(m') \neq L(m'')\) and \(\bar{X}(m') < \bar{X}(m'')\).
    Assume for the case of contradiction that \(X_t(m_-'') = X_t(m')\) for some \(m' < m_-''\), and let \(t = \tau\) be the first time this happens.
    For \(t \in [0,\tau)\), \(L(m'')\) can then only collide with mass to its right, and since \(A(m_-'') = A^{**}(m_-'')\), \(A^{**}\) is increasing and \(A \ge A^{**}\), it follows that \(\Proj{\Hscr_{X_t}}\bar{\Psi}(m_-'') \ge \Proj{\Kscr}\bar{\Psi}(m_-'') = \psi''\) in this time interval.
    
    Now consider the interval \([m',m_-'')\) and introduce the averaged quantities 
    \begin{equation*}
      Y_t = \dint_{m'}^{m_-''} X_t(\omega)\dee\omega, \qquad \Gamma_t = \dint_{m'}^{m_-''} \Proj{\Hscr_{X_t}} \bar{\Psi}(\omega)\dee\omega,
    \end{equation*}
    such that \(Y_\tau = X_\tau(m_-'')\).
    Let us write \(m_+' = \sup L(m')\), where if \(L(m') = \{m'\}\) we have \(m_+' = m'\), otherwise \(m' < m_+'\).
    Initially, \(\Proj{\Hscr_{X_0}}\bar{\Psi} = \bar{\Psi}\) and we compute
    \begin{align*}
      \dint_{m'}^{m_-''} \bar{\Psi}(\omega)\dee\omega &= \frac{A(m_+') - A(m') + A(m_-'') - A(m_+')}{m_-''-m'} \le \frac{A^{**}(m_+') - A^{**}(m') + A^{**}(m_-'') - A^{**}(m_+')}{m_-''-m'},
    \end{align*}
    where \(A^{**}(m_+')-A^{**}(m')\) and \(A^{**}(m_-'')-A^{**}(m_+')\) may be zero, but not simultaneously.
    When nonzero, we have \(A^{**}(m_+')-A^{**}(m') = (m_+'-m')\psi'\) and \(A^{**}(m_-'')-A^{**}(m_+') = (m_-''-m_+')\tilde{\psi}\),
    where \(\tilde{\psi} \in (\psi',\psi'')\) is the average slope of \(A^{**}\) on \((m_+', m_-'')\).
    In any case, we find that \(\Gamma_0 \le \psi'\), and this remains an upper bound for \(\Gamma_t\) whenever mass within \([m_+',m_-'')\) collides, since the projection \(\Proj{\Hscr_{X_t}}\) can only make the extremal values of \(\bar{\Psi}\) less extreme.
    Furthermore, for \(t \in [0,\tau)\), \([m_+',m_-'')\) can only interact with mass to its left; since \(A^{**}\) is increasing, \(A(m_-') = A^{**}(m_-')\) and \(A \ge A^{**}\), collision with mass to the left of \(L(m')\) can only decrease \(\Proj{\Hscr_{X_t}}\bar{\Psi}\) on \([m',m_-'')\).
    Then, since \(\psi' < \psi''\) it follows from case II of Lemma \ref{lem:dichotomy} that \(X_\tau(m_-'') > Y_\tau'\), which contradicts our assumption.
    In particular, mass labels belonging to distinct \(L(m)\) can never collide.
    If \(L(m) = \{m\}\), then \(\Proj{\Hscr_{X_t}}\bar{\Psi}(m) = \bar{\Psi}(m)\).
    Otherwise, if \(L(m) = [m_-,m_+)\), then for any \(m_1, m_2 \in (m_-,m_+)\)
    \begin{equation*}
      \int_{m_2}^{m_+}\Proj{\Hscr_{X_t}}\bar{\Psi}(\omega)\dee\omega \le \int_{m_-}^{m_+}\Proj{\Hscr_{X_t}}\bar{\Psi}(\omega)\dee\omega = \int_{m_-}^{m_+}\bar{\Psi}(\omega)\dee\omega = \Proj{\Kscr}\bar{\Psi}(m) \le \int_{m_-}^{m_1}\Proj{\Hscr_{X_t}}\bar{\Psi}(\omega)\dee\omega.
    \end{equation*}
    Following the proof of case I in Lemma \ref{lem:dichotomy}, we find that the time-independent constant \(c\) can be chosen as \(c = \min\{\bar{X}(m'')-\bar{X}(m'), \eta\}\) for some \(\eta > 0\) depending on \(\psi''-\psi' > 0\).
    
    \textit{Case II:} Consider \(m \in (m_-, m_+)\), where \((m_-, m_+) \subset \Sigma_-\) is a maximal interval.
    Now, consider any \(m_1, m_2 \in (m_-,m_+)\) where \(\bar{X}(m_1) < \bar{X}(m_2)\).
    We will prove that the mass centers
    \begin{equation*}
      Y_t^1 = \dint_{m_-}^{m_1}X_t(\omega)\dee\omega, \qquad Y_t^2 = \dint_{m_2}^{m_+}X_t(\omega)\dee\omega
    \end{equation*}
    must coincide in finite time, which necessarily means that \(X_t(m_1)\) and \(X_t(m_2)\) must as well.
    As long as \(X_t(m_1) < X_t(m_2)\), by definition of \(\Sigma_-\), we must have
    \begin{equation*}
      \Gamma_t^1 = \dint_{m_-}^{m_1} \Proj{\Hscr_{X_t}}\bar{\Psi}(\omega)\dee\omega \ge \min_{m_1 \le m \le m_2}\frac{A(m)-A(m_-)}{m-m_-} \eqqcolon \Gamma^1 > \frac{A(m_+)-A(m_-)}{m_+-m_-}
    \end{equation*}
    and
    \begin{equation*}
      \Gamma_t^2 = \dint_{m_2}^{m_+} \Proj{\Hscr_{X_t}}\bar{\Psi}(\omega)\dee\omega \le \max_{m_1 \le m \le m_2}\frac{A(m_+)-A(m)}{m_+-m} \eqqcolon \Gamma^2 < \frac{A(m_+)-A(m_-)}{m_+-m_-}.
    \end{equation*}
    The result then follows from case II of Lemma \ref{lem:dichotomy}.
    
    \textit{Case III:} (i):
    From before we know that mass labels belonging to different \(L(m)\) will never cluster, so without loss of generality we consider \(L(m) \neq \{m\}\) with the associated value \(\psi = \Proj{\Kscr}\bar{\Psi}(m)\).
    We note that \(L(m)\) may contain segments belonging to both \(\Sigma_0\) and \(\Sigma_-\).
    If \(m \in \Sigma_-\), we write \(C(m) = [m_+,m_-)\), where \((m_-, m_+) \subset \Sigma_-\) is the maximal interval containing \(m\).
    Alternatively, if \(m \in \Sigma_0\) and there is an initial cluster at \(m\), we denote this cluster by \(C(m)\).
    Otherwise, \(C(m) = \{m\}\).
    We consider \(m',m'' \in L(m)\) with \(m' < m''\) and \(C(m') \neq C(m'')\), and aim to show that these sets cannot cluster in finite time.
    We write \(m_-'' = \min C(m'')\) and \(m_+' = \sup C(m')\).
    Following the same arguments as in the first part of the proof, we deduce that \(\Proj{\Hscr_{X_t}}\bar{\Psi}(m_-'') \ge \psi\); similarly, if \(C(m') \neq \{m'\}\) then \(\dint_{m'}^{m_+''}\Proj{\Hscr_{X_t}}\bar{\Psi}(\omega)\dee\omega \le \psi\), otherwise \(\Proj{\Hscr_{X_t}}\bar{\Psi}(m') = \psi\). 
    Writing \(Y_t = \dint_{m'}^{m_+'}X_t(\omega)\dee\omega\) if \(C(m') \neq\{m'\}\) and \(Y_t = X_t(m')\) otherwise, we define \(l(t) = X_t(m_-'')-Y_t\).
    Observe that \(X_t(m_-'')-Y_t \le X_t(m'')-X_t(m')\),
    so \(l(t)\) gives a lower bound on their distance, and we have \(l(0) > 0\).
    From \eqref{eq:EA:DE:L} it follows that \(\diff{^+}{t}l(t) \ge -2\Phi(l(t)/2)\), which we integrate to obtain 
    \begin{equation*}
      \int_{l(t)}^{l(0)} \frac{\dee x}{\Phi(\frac{x}{2})} \le 2t.
    \end{equation*}
    From our assumption, \(l(t)\) can only become zero in infinite time. 
    
    (ii): Consider \(m \in \Omega\) with \(L(m) = [m_-, m_+)\) and \(\Proj{\Kscr}\bar{\Psi}(m) = \psi\), where we necessarily have \(A(m_\pm) = A^{**}(m_\pm)\).
    Since \(m \mapsto X_t(m)\) is nondecreasing and \(\bar{X}(\omega)\) is bounded for \(\omega \in L(m)\), both \(\lim\limits_{\omega\to m_-+}\bar{X}(m) = \bar{X}(m_-)\) and \(\lim\limits_{\omega\to m_+-}\bar{X}(m) \eqqcolon \bar{X}(m_+-)\) are finite.
    From before we know that it is only possible for \(X_t(m_-)\) to collide with mass to its right, leading to the inequality \(\Proj{\Hscr_{X_t}}\bar{\Psi}(m_-) \ge \Proj{\Kscr}\bar{\Psi}(m_-) = \psi\).
    
    On the other hand, for any \(m' \in L(m)\), the set \([m',m_+)\) can only collide with mass to its left, leading to
    \begin{equation*} \dint_{m'}^{m_+}\Proj{\Hscr_{X_t}}\bar{\Psi}(\omega)\dee\omega \le \dint_{m'}^{m_+}\Proj{\Kscr}\bar{\Psi}(\omega)\dee\omega = \psi.
    \end{equation*}
    We then use \eqref{eq:EA:DE:U} to deduce that for \(t \ge 0\) we have
    \begin{equation*}
      \dint_{m'}^{m_+}X_t(\omega)\dee\omega -X_t(m_-) \le \dint_{m'}^{m_+}\bar{X}(\omega)\dee\omega -\bar{X}(m_-).
    \end{equation*}
    Letting \(m'\) tend to \(m_+\) we find that \(X_t(m_+-)-X_t(m_-) \le \bar{X}(m_+-)-\bar{X}(m_-)\).
    Then, by the oddness and subadditivity of \(\Phi\) from Lemma \ref{lem:Phi:props}, we estimate
    \begin{align*}
      \Phi\ast\rho_t(X_t(m_+-)) - \Phi\ast\rho_t(X_t(m_-)) &\ge \int_{m_-}^{m_+} \left[\Phi(X_t(m_+-)-X_t(\omega)) + \Phi(X_t(\omega)-X_t(m_-))\right]\dee\omega \\
      &\ge (m_+-m_-) \Phi\left(X_t(m_+-)-X_t(m_-)\right).
    \end{align*}
    This leads to
    \begin{equation}\label{eq:(ii):1}
      \diff{^+}{t} \left(X_t(m_+-)-X_t(m_-)\right) \le -(m_+-m_-) \Phi\left(X_t(m_+-)-X_t(m_-)\right),
    \end{equation}
    and since we assumed \(\phi\) not to be identically zero, it follows that \(\Phi(x) > 0\) for \(x > 0\).
    This means that \(l(t) = X_t(m_+-)-X_t(m_-)\) must decrease over time, and we integrate to find
    \begin{equation}\label{eq:(ii):2}
      \int_{l(t)}^{l(0)} \frac{\dee y}{\Phi(y)} \ge (m_+-m_-)t.
    \end{equation}
    Here the right-hand side is increasing, and so it follows that \(\lim\limits_{t\to\infty}l(t) = 0\).
    
    (iii): Applying our additional assumption in \eqref{eq:(ii):2} we obtain an upper bound on the time it takes to achieve \(l(t) = 0\).
  \end{proof}
  
  \subsubsection*{Assumptions on \(\phi\)}
  Comparing Theorem \ref{thm:clustering} to \cite[Theorem 1.7]{leslie2024finite}, there are some slight differences in the assumptions for case III.
  The assumption of bounded \(\phi\), i.e., \(\phi(x) \le \phi(0)\) for some \(\phi(0) \ge 0\) is covered by (i).
  Indeed, then \(\Phi(x) \le \phi(0) x\) for \(x \ge 0\), the reciprocal of \(\Phi\) is not integrable at the origin, and one finds the estimate \(l(t) \ge l(0)\e^{-\phi(0)t}\).
  
  In the subcase (ii) they make use of the bounded support of their measure, ensuring that the image of \(L(m)\) through \(X_t \colon \Omega \to \R\) can be made arbitrarily small.
  For the same reasons, we assume that \(\bar{X}(\omega)\) is bounded for \(\omega \in L(m)\), so that we start with a finite interval.
  To get the lower bound, they assume a heavy tail, \(\int_1^\infty\phi(x)\dee x = \infty\), which implies global communication \(\phi(x)>0\).
  Under this assumption, we could combine it with \(\Phi(x) \ge \phi(x) x\) to find \(\Phi(l(t)) \ge \phi(l(0))l(t)\).
  In turn this leads to \(l(t) \le l(0) \e^{-(m_+-m_-)\phi(l(0)) t}\), which is similar to their bound.
  
  In subcase (iii), the assumption of weakly singular \(\phi\), cf.\ \eqref{eq:w-sing}, is covered by the integrability of the reciprocal of \(\Phi\) at the origin.
  Moreover, with the global communication assumption one has that \(\diff{^+}{t}l(t) \le -(m_+-m_-)\phi(R) l(t)\) as long as \(l(t) \ge R\).
  Then, when \(l(t) < R\), one follows \cite{leslie2024finite} in using \eqref{eq:w-sing} to show finite-time clustering.
  
  \subsection{Tails and flocking of subgroups}
  Case III of Theorem \ref{thm:clustering} featured some assumptions on the singularity of \(\phi\) at the origin, through whether or not the reciprocal of \(\Phi\) is integrable at the origin.
  Here we saw that the mass contained in a non-singleton \(L(m)\) exhibits a kind of ``local flocking'', in the sense that the diameter of \(X_t(L(m))= \{X_t(\omega) \colon \omega \in L(m)\}\) cannot increase.
  Moreover, if \(\phi\) is nonsingular there is no finite-time clustering except for existing clusters or supercritical regions.
  On the other hand, if \(\phi\) is weakly singular, mass in \(L(m)\) will cluster in finite time.
  
  In Theorem \ref{thm:clustering} we have seen that the continuum natural velocity \(\bar{\Psi}\), through its projection on the convex cone \(\Kscr\), partitions \(\Omega\) into subgroups \(L(m)\) of mass which remain uniformly separated.
  Let us see how the tail conditions in \eqref{eq:tail} affect the ``flocking'' of two distinct subgroups \(L(m')\) and \(L(m'')\) with \(\psi' = \Proj{\Kscr}\bar{\Psi}(m') < \Proj{\Kscr}\bar{\Psi}(m'') = \psi''\).
  As we have seen in the proof of Theorem \ref{thm:clustering}, the dynamics of the Euler-alignment system is determined by a competition
  between the difference in natural velocities \(\psi''-\psi'\), which drives \(L(m')\) and \(L(m'')\) apart, and the strength of the communication \(\Phi\) which tends to align them.
  
  First, suppose \(\phi\) is integrable on \(\R\), i.e., has a thin tail.
  Then \(\Phi(x)\) is bounded and we have \(\lim\limits_{x\to\infty} 2\Phi(x) = \norm{\phi}_{\Lp{1}}\).
  In particular, we see that the communication is not strong enough to prohibit the subgroups from drifting apart if the difference in natural velocities is large.
  Indeed, suppose \(c = \psi'' - \psi' - \norm{\phi}_{\Lp{1}} > 0\), and let \(l(t)\) be the distance between the centers of mass for the two subgroups.
  Then it follows from \eqref{eq:EA:DE:L} that
  \(l(t) \ge l(0) + c t\), i.e., the distance increases linearly in time.
  
  On the other hand, if \(\phi\) is not integrable, i.e., has a fat tail, then \(\lim\limits_{x\to\infty} \Phi(x) = \infty\).
  In particular, it is invertible for any \(x \in \R\).
  Then it follows that \(\Phi\) always can become large enough to balance \(\psi''-\psi'\).
  Indeed, suppose for simplicity that \(\bar{X}(L(m'))\) and \(\bar{X}(L(m''))\) are bounded, and write \(m_-' = \min L(m') < \sup L(m'') = m_+''\).
  Then, writing \(l(t) = X_t(m_+''-)-X_t(m_-')\) and following the derivation of \eqref{eq:(ii):1}, we find
  \begin{equation*}
    \diff{^+}{t} l(t) \le \psi'' - \psi' -(m_+''-m_-') \Phi\left(l(t)\right).
  \end{equation*}
  In particular, we see that if \(l(t) > \Phi^{-1}((\psi''-\psi')/(m_+''-m_-'))\), the distance \(l(t)\) between the outer edges of \(L(m')\) and \(L(m'')\) must decrease to this threshold or less.
  On the other hand, from \eqref{eq:EA:DE:L} we see that there is a corresponding lower threshold for the distance between \(L(m')\) and \(L(m'')\) given by \(2 \Phi^{-1}((\psi''-\psi')/2)\).
  
  \subsection*{Acknowledgments}
  This research was supported by the Research Council of Norway through grant no.~286822, \textit{Wave Phenomena and Stability---a shocking combination (WaPheS)}, and by the Swedish Research Council through grant 2021-06594 while the author was in residence at Institut Mittag-Leffler in Djursholm, Sweden during the autumn semester 2023.
  The author is also grateful to the anonymous referees whose feedback helped improve the presentation of this work.

\end{document}